\newtheorem{thm}{Theorem}
\newtheorem{lem}[thm]{Lemma}
\newtheorem{cor}[thm]{Corollary}
\newtheorem{prop}[thm]{Proposition}
\theoremstyle{remark}
\newtheorem{remark}[thm]{Remark}
\theoremstyle{definition}
\newtheorem{definition}[thm]{Definition}
\newenvironment{BaileyL}{\vskip 2mm\noindent \textbf{Bailey's Lemma} \em}{}{}
\newenvironment{RogRamA}{\vskip 2mm\noindent \textbf{The Rogers-Ramanujan Identities}
 \em}{}{}
\newenvironment{GordonThm}{\vskip 2mm \noindent\textbf{Gordon's Partition Theorem}\em}{}{}
\newenvironment{AndrewsAnalytic}{\vskip 2mm \noindent\textbf{
   Andrews' Analytic Counterpart to Gordon's
  Theorem}\em}{}{}
\newcommand{\hgs}[6]{ {}_{#1}\phi_{#2} \left[ \genfrac{}{}{0pt}{}{#3}{#4} ; {#5},{#6} \right]}
\newcommand{\binomial}[2]{ \genfrac{(}{)}{0pt}{}{#1}{#2} }
\numberwithin{thm}{section}
\begin{document}




\author{Andrew V. Sills}
\title{On Identities of the Rogers--Ramanujan Type}
\date{Received May 2003; Revised June 1, 2004}

\maketitle


\begin{abstract}A generalized Bailey pair, which contains several 
special cases considered
by Bailey (\emph{Proc. London Math. Soc. (2)}, 50 (1949), 421--435), is
derived and used to find a number of new Rogers-Ramanujan type identities.
Consideration of associated $q$-difference equations points to a 
connection with a mild extension of Gordon's combinatorial generalization
of the Rogers-Ramanujan identities (\emph{Amer. J. Math.}, 83 (1961),     
393--399).  This, in turn, allows the
formulation of natural combinatorial interpretations of many of
the identities in Slater's list (\emph{Proc. London Math. Soc. (2)} 54 (1952),
147--167), as well as the new identities presented here.  A list of
26 new double sum--product Rogers-Ramanujan type identities are 
included as an appendix. 
\end{abstract}


\section{Introduction}\label{intro}
\subsection{Overview} 
We begin by recalling the famous Rogers-Ramanujan identities:
\begin{RogRamA} 
 \begin{equation}\label{RRa1}
   \sum_{n=0}^\infty \frac{q^{n^2}}{(q;q)_n} = 
   \frac{(q^2, q^3, q^5; q^5)_\infty}{(q;q)_\infty},
 \end{equation} and
\begin{equation}\label{RRa2}
   \sum_{n=0}^\infty \frac{q^{n^2+n}}{(q;q)_n} = 
   \frac{(q, q^4, q^5; q^5)_\infty}{(q;q)_\infty},
 \end{equation}
where \[ (a;q)_m = \prod_{j=0}^{m-1} (1-aq^j), \]
      \[ (a;q)_\infty = \prod_{j=0}^\infty (1-aq^j), \] and
      \[ (a_1, a_2, \dots, a_r; q)_s = (a_1;q)_s (a_2;q)_s \dots (a_r;q)_s, \]
and throughout this paper we assume $|q|<1$ to ensure convergence.
\end{RogRamA} 

The Rogers-Ramanujan identities are due to L.~J.~Rogers~\cite{Rog1894},
and were rediscovered independently by S. Ramanujan~\cite{MacMahon} and
I. Schur~\cite{Schur}.  In the 1940's, W.~N.~Bailey undertook a careful
study of Rogers' work, and greatly simplified it in a pair of 
papers~(\cite{Bailey1} and~\cite{Bailey2}).  In these papers, Bailey was
able to prove what he termed ``$a$-generalizations'' 
(i.e. formulae with a second variable $a$ in addition to $q$), of the Rogers-Ramanujan 
identities and a number of additional identities 
of similar type (some of which were due to Rogers and others of
which were new at the time).  Hereafter, $a$-generalizations of Rogers-Ramanujan
type identities will be referred to simply as ``$a$-RRT identities.''

 By considering a certain ``parametrized Bailey pair,'' we will be 
naturally led to a variety of $a$-RRT identities, some of which were
found by Bailey, and others of which appear to be new.  Some examples of
new $a$-RRT identities include 
\begin{gather} 
  \sum_{n\geqq 0} \frac{a^n q^{n(n+1)/2} (-1;q)_n}{(aq;q^2)_n (q;q)_n}\nonumber\\
 = \frac{(-aq;q)_\infty}{(aq;q)_\infty}
     \sum_{r\geqq 0} \frac{(-1)^r a^{3r} q^{5r^2} (-1;q)_{2r} (1-aq^{4r}) (a;q^2)_r }
          { (1-a) (q^2;q^2)_r (-aq;q)_{2r} }\label{a-mod10SS}
\end{gather}
  and
\begin{gather}
 \sum_{n\geqq 0} \sum_{r\geqq 0} \frac{a^{n+r} q^{n^2 + 2r^2}}
 {(aq;q^2)_n (q^2;q^2)_r (q;q)_{n-r} }\nonumber\\
 = \frac{1}{(aq;q)_\infty }
 \sum_{r\geqq 0}\frac{(-1)^r a^{4r} q^{9r^2-r} (1-aq^{4r}) (a;q^2)_r}
 {(1-a)(q^2;q^2)_r} \label{a-mod18i}
 \end{gather}
 
From the $a$-RRT identities, such as \eqref{a-mod10SS} and \eqref{a-mod18i},
we may easily deduce elegant Rogers-Ramanujan type identities (in $q$ only); 
in these instances we obtain:
\begin{equation} \label{mod10SS}
 \sum_{n\geqq 0} \frac{ q^{n(n+1)/2} (-1;q)_n}{ (q;q^2)_n (q;q)_n}
=\frac{(q^5,q^5,q^{10};q^{10})_\infty (-q;q)_\infty}{(q;q)_\infty},
\end{equation}
which, surprisingly is not included in Slater's list~\cite{Slater2}, and
\begin{equation} \label{mod18}
   \sum_{n\geqq 0}\sum_{r\geqq 0} 
    \frac{q^{n^2 + 2r^2} }{(q;q^2)_n (q^2;q^2)_r (q;q)_{n-2r}}
  = \frac{(q^{8},q^{10},q^{18};q^{18})_\infty}{(q;q)_\infty}.
\end{equation}

\begin{remark} The referee pointed out that (\ref{a-mod10SS}) follows from the
$e, d\to\infty$, $c=-1$ case of \cite[p. 68, (3.5.7)]{GR} and that Bailey
actually had a generalization of (\ref{a-mod10SS}), namely 
\cite[p. 6 (6.3)]{Bailey2}, which makes it all the more remarkable that
(\ref{mod10SS}) did not appear in Slater's list.  As we shall see later, 
(\ref{a-mod10SS}) and (\ref{mod10SS}) follow from the $(d,k)=(2,3)$ case
of the parametrized Bailey pair, and (\ref{a-mod18i}) and (\ref{mod18}) 
follow from the $(d,k)=(2,4)$ case.
\end{remark}

  Once we have an $a$-RRT identity in hand, we then study the $q$-difference
equations related to the associated set of of identities.  Observing the
patterns which emerge in the $q$-difference equations associated with
various sets of identities, one is led to consider the following mild
extension of Basil Gordon's partition theorem:

\begin{thm}\label{GenGordon}
Let $A_{d,k,i}(n)$ denote the number of partitions of $n$ into parts 
$\not\equiv 0, \pm di\pmod{2dk+d}$.  Let $B_{d,k,i}(n)$ denote the
number of partitions of $n$ wherein
  \begin{itemize}
     \item The integer $d$ appears as a part at most $i-1$ times,
     \item the total number of appearances of $dj$ and $dj+d$ (i.e. any two consecutive
       multiples of $d$) together is at most $k-1$, and
     \item nonmultiples of $d$ may appear as parts without restriction.
  \end{itemize}
Then for $1\leqq i\leqq k$, $A_{d,k,i}(n) = B_{d,k,i}(n)$.
\end{thm}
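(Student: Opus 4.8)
The plan is to reduce Theorem~\ref{GenGordon} to Gordon's original partition theorem by a direct bijection on the underlying sets of partitions, treating the multiples of $d$ and the nonmultiples of $d$ as two independent ``streams.'' First I would recall that Gordon's theorem asserts the equality, for $1\leqq i\leqq k$, of the number of partitions of $m$ into parts $\not\equiv 0,\pm i\pmod{2k+1}$ with the number of partitions of $m$ in which $1$ appears at most $i-1$ times and any two consecutive integers together appear at most $k-1$ times. The key observation is that in the present theorem the conditions on the $B$-side partitions decouple cleanly: the subpartition formed by the parts that are multiples of $d$, after dividing every such part by $d$, is an \emph{arbitrary} Gordon-type partition (with parameters $k$ and $i$), while the subpartition formed by the nonmultiples of $d$ is completely unrestricted.

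The main steps, in order, are as follows. First I would set up the bijection: given a partition $\pi$ counted by $B_{d,k,i}(n)$, split it as $\pi=\mu\cup\nu$, where $\mu$ consists of the parts divisible by $d$ and $\nu$ consists of the rest; write $|\mu|=dm$ and $|\nu|=n-dm$. Dividing the parts of $\mu$ by $d$ yields a partition $\mu'$ of $m$ satisfying exactly Gordon's two conditions with parameters $k,i$. Second, apply Gordon's theorem to $\mu'$ to obtain a partition $\mu''$ of $m$ into parts $\not\equiv 0,\pm i\pmod{2k+1}$; multiplying its parts by $d$ gives a partition of $dm$ into parts that are multiples of $d$ but $\not\equiv 0,\pm di\pmod{d(2k+1)}$, i.e.\ $\not\equiv 0,\pm di\pmod{2dk+d}$. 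Third, I must account for the unrestricted part $\nu$: here the point is that a partition into nonmultiples of $d$ is, via the standard Euler-type argument (or a direct comparison of generating functions $\prod_{d\nmid t}(1-q^t)^{-1}$), equinumerous with a partition into parts $\not\equiv 0\pmod d$ that are also $\not\equiv \pm di \pmod{2dk+d}$ \emph{combined with} the appropriate correction — more precisely, I would argue at the level of generating functions that
\[
\prod_{t\geqq 1}\frac{1}{(1-q^{dt})}\Big|_{\text{Gordon }(k,i)}\cdot\prod_{\substack{t\geqq 1\\ d\nmid t}}\frac{1}{1-q^t}
\;=\;\prod_{\substack{t\geqq 1\\ t\not\equiv 0,\pm di\,(\mathrm{mod}\ 2dk+d)}}\frac{1}{1-q^t},
\]
where the first factor on the left is replaced, via Gordon, by $\prod_{\substack{t\geqq 1\\ t\equiv 0\,(d),\ t\not\equiv 0,\pm di\,(2dk+d)}}(1-q^t)^{-1}$; the product identity then follows by partitioning the residues mod $2dk+d$ into those divisible by $d$ and those not, and observing that the only classes $\not\equiv 0,\pm di$ that get excluded are exactly the two classes $\pm di$, which are multiples of $d$.

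The step I expect to be the main obstacle is the bookkeeping in this last generating-function identity: one must verify carefully that among residues modulo $2dk+d$ the forbidden classes $0,di,-di$ are precisely the multiples of $d$ that Gordon's theorem removes (after scaling) and that no nonmultiple of $d$ is ever excluded, so that the ``nonmultiple'' stream genuinely contributes $\prod_{d\nmid t}(1-q^t)^{-1}$ with no overlap. This requires checking that $di$ and $-di\equiv 2dk+d-di$ are distinct and nonzero modulo $2dk+d$ for $1\leqq i\leqq k$ (so the count of excluded multiples-of-$d$ classes is right) and that $\gcd$ considerations do not cause a nonmultiple of $d$ to coincide with $\pm di$. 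Once this residue bookkeeping is pinned down, combining the three streams multiplicatively on the generating-function side — equivalently, concatenating the bijections on partitions — yields $\sum_n B_{d,k,i}(n)q^n = \sum_n A_{d,k,i}(n)q^n$, which is the claim.
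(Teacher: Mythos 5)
Your proposal is correct and is essentially the paper's own argument: the paper also factors the generating function for $B_{d,k,i}(n)$ as $\prod_{d\nmid j}(1-q^j)^{-1}$ times the Gordon $(1,k,i)$ generating function in $q^d$, applies Gordon's theorem to the latter, and merges the two products using exactly the residue bookkeeping you describe (the excluded classes $0,\pm di\pmod{d(2k+1)}$ consist entirely of multiples of $d$). The only cosmetic difference is that the paper states this purely at the level of generating functions rather than as an explicit bijection.
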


\begin{remark}
The case $d=1$ is Gordon's partition theorem~\cite{Gordon}.\end{remark}

  As we shall see, special cases of Theorem~\ref{GenGordon} provide
new combinatorial interpretations for various identities in Slater's
list~\cite{Slater2}, as well as for the new analytic identities presented
here.

 For example, consider the Rogers mod 14 identities, which appear in
Slater~\cite{Slater2} as identities (59), (60), and (61) 
(see \eqref{RogMod14-3}--\eqref{RogMod14-1}).  We shall
see that these may be interpreted combinatorially as the $d=2$, $k=3$
case of Theorem~\ref{GenGordon}:

\begin{cor} \label{combmod14} For $i=1,2,3$,
the number of partitions of $n$ into parts wherein
   \begin{itemize}
     \item $2$ appears as a part at most $i-1$ times,
     \item the total number of appearances of any two consecutive even
numbers is at most $2$, and
     \item odd numbers may appear as parts without restriction,   
    \end{itemize}
equals the number of partitions of $n$ into parts not congruent to
$0,\pm 2i\pmod{14}$.
\end{cor}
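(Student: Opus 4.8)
The plan is to obtain Corollary~\ref{combmod14} as the immediate specialization $d=2$, $k=3$ of Theorem~\ref{GenGordon}. First I would substitute these values into the modulus appearing in the definition of $A_{d,k,i}(n)$: since $2dk+d = 2\cdot 2\cdot 3+2 = 14$ and $di = 2i$, the quantity $A_{2,3,i}(n)$ counts precisely the partitions of $n$ into parts $\not\equiv 0,\pm 2i\pmod{14}$, which is exactly the right-hand side of the corollary.

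Next I would translate the three defining conditions for $B_{d,k,i}(n)$ under $d=2$. The condition ``the integer $d$ appears at most $i-1$ times'' becomes ``$2$ appears at most $i-1$ times''; the condition ``the total number of appearances of $dj$ and $dj+d$ together is at most $k-1$'' becomes ``for every $j$, the appearances of $2j$ and $2j+2$ together number at most $2$,'' i.e. any two consecutive even numbers appear at most twice in total; and ``nonmultiples of $d$ appear without restriction'' becomes ``odd numbers appear without restriction.'' These three statements are word for word the three bulleted conditions in the corollary, so $B_{2,3,i}(n)$ equals the left-hand side.

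Finally, the range $1\leqq i\leqq k$ in Theorem~\ref{GenGordon} becomes $i\in\{1,2,3\}$, matching the corollary. Applying Theorem~\ref{GenGordon} then yields $A_{2,3,i}(n)=B_{2,3,i}(n)$ for all $n\geqq 0$ and $i=1,2,3$, which is the assertion. There is essentially no obstacle here beyond bookkeeping: the only points requiring care are confirming that the parametrized modulus $2dk+d$ and the forbidden residues $0,\pm di$ genuinely collapse to $14$ and $0,\pm 2i$, and checking that the $i=k=3$ boundary case is consistent (there the condition ``$2$ appears at most twice'' is subsumed by the pair condition on $2$ and $4$, so no conflict arises). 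I would also note that identifying these two partition-counting functions with Slater's identities (59)--(61) is a separate matter, handled via the analytic side of the paper, and is not needed for the purely combinatorial equality asserted in the corollary.
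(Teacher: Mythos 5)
Your proposal is correct and is exactly the paper's route: Corollary~\ref{combmod14} is obtained there precisely as the $d=2$, $k=3$ specialization of Theorem~\ref{GenGordon} (equivalently, of $\mathcal{B}_{2,3,i}(1)=Q_{2,3,i}(1)$), with the same bookkeeping that $2dk+d=14$, $di=2i$, and $k-1=2$. Nothing further is needed.
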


Similarly, the combinatorial interpretation of \eqref{mod18} is
\begin{cor}  The number of partitions of $n$ into parts wherein
   \begin{itemize}
     \item $2$ appears as a part at most $3$ times,
     \item the total number of appearances of any two consecutive 
even numbers is at most $3$, and
     \item odd numbers may appear as parts without restriction,   
    \end{itemize}
equals the number of partitions of $n$ into parts not congruent to
$0,\pm {8}\pmod{18}$.
\end{cor}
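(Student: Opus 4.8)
The plan is to read off this statement as a single specialization of Theorem~\ref{GenGordon}; once that theorem is available, essentially no further combinatorial work is needed. The substitution to make is $d=2$, $k=4$, $i=4$. First I would translate the hypotheses defining $B_{d,k,i}(n)$ under $d=2$: the multiples of $d$ are the even numbers and the nonmultiples of $d$ are the odd numbers, so ``$d$ appears as a part at most $i-1$ times'' becomes ``$2$ appears as a part at most $3$ times,'' ``the total number of appearances of $dj$ and $dj+d$ together is at most $k-1$'' becomes ``the total number of appearances of any two consecutive even numbers is at most $3$,'' and ``nonmultiples of $d$ may appear without restriction'' becomes ``odd numbers may appear without restriction.'' These are exactly the three conditions listed in the corollary, so the partitions counted by the first quantity are precisely those counted by $B_{2,4,4}(n)$.

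Next I would compute the arithmetic progression on the product side: with $d=2$, $k=4$, $i=4$ we have $2dk+d = 2\cdot 2\cdot 4 + 2 = 18$ and $di = 8$, so $A_{2,4,4}(n)$ counts partitions of $n$ into parts $\not\equiv 0,\pm 8\pmod{18}$ — exactly the second quantity in the corollary. Since the hypothesis $1\leqq i\leqq k$ of Theorem~\ref{GenGordon} holds here (indeed $i=k=4$), the theorem yields $A_{2,4,4}(n) = B_{2,4,4}(n)$ for every $n$, which is the assertion.

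Finally, to tie the corollary to the analytic identity~\eqref{mod18}, whose combinatorial meaning it records, I would observe that the right-hand side $\dfrac{(q^{8},q^{10},q^{18};q^{18})_\infty}{(q;q)_\infty}$ is, after cancelling the factors $1-q^m$ with $m\equiv 0,8,10\pmod{18}$ and using $10\equiv -8\pmod{18}$, the generating function for partitions into parts $\not\equiv 0,\pm 8\pmod{18}$, i.e.\ $\sum_n A_{2,4,4}(n)q^n$; and that the double sum on the left-hand side of~\eqref{mod18} is $\sum_n B_{2,4,4}(n)q^n$, which is part of the $(d,k)=(2,4)$ analysis already carried out in the derivation of~\eqref{mod18}. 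There is no real obstacle here: the substitution into Theorem~\ref{GenGordon} is immediate, and the only mild bookkeeping is the infinite-product cancellation on the product side and the prior identification of the left-hand side of~\eqref{mod18} with the $B$-type generating function.
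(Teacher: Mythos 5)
Your proposal is correct and is exactly the paper's route: the corollary is read off as the $d=2$, $k=4$, $i=4$ case of Theorem~\ref{GenGordon} (with $2dk+d=18$ and $di=8$), and its link to \eqref{mod18} comes from the chain $\mathcal{B}_{2,4,4}(1)=Q_{2,4,4}(1)=F_{2,4,4}(1)$ together with Proposition~\ref{infprod}, just as you describe. No gaps.
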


\subsection{Background}
The part of Bailey's results necessary for this current discussion may
be briefly summarized as follows:

\begin{definition}
A pair of sequences $\left(\alpha_n (a,q),\beta_n(a,q)\right)$ is called a \emph{Bailey pair} if
for $n\geqq 0$, 
   \begin{equation} \label{BPdef}
      \beta_n (a,q) = \sum_{r=0}^n \frac{\alpha_r(a,q) }{(q;q)_{n-r} (aq;q)_{n+r}}.
   \end{equation}
\end{definition}
  
In~\cite{Bailey1} and~\cite{Bailey2}, Bailey proved the fundamental
result now known as ``Bailey's Lemma'' (see also~\cite[Chapter 3]{GEA:qs}):
\begin{BaileyL}
If $(\alpha_r (a,q), \beta_j (a,q))$ form a Bailey pair, then
\begin{gather} 
  \frac{1}{ (\frac{aq}{\rho_1};q)_n ( \frac{aq}{\rho_2};q)_n} 
  \sum_{j\geqq 0} \frac{ (\rho_1;q)_j (\rho_2;q)_j 
     (\frac{aq}{\rho_1 \rho_2} ;q)_{n-j}}
     {(q;q)_{n-j}} \left( \frac{aq}{\rho_1 \rho_2} \right)^j \beta_j(a,q) \nonumber\\
 = \sum_{r=0}^n \frac{ (\rho_1;q)_r (\rho_2;q)_r}
    { (\frac{aq}{\rho_1};q)_r (\frac{aq}{\rho_2};q)_r (q;q)_{n-r} (aq;q)_{n+r}}
    \left( \frac{aq}{\rho_1 \rho_2} \right)^r \alpha_r(a,q). \label{BL}
\end{gather}
\end{BaileyL}

An immediate consequence of Bailey's Lemma is the following important corollary:

\begin{cor}
If $(\alpha_m (a,q), \beta_j (a,q))$ form a Bailey pair, then
\begin{equation} \label{WBL}
  \sum_{j\geqq 0} a^j q^{j^2} \beta_j (a,q) 
   = \frac{1}{(aq;q)_\infty} \sum_{m=0}^\infty a^m q^{m^2} \alpha_m (a,q),
\end{equation}
\begin{gather} 
   \sum_{j\geqq 0} a^j q^{j^2} (-q;q^2)_j \beta_j (a,q^2) 
  =\frac{(-aq;q^2)_\infty}{(aq^2;q^2)_\infty} 
   \sum_{m=0}^\infty \frac{ a^m q^{m^2} (-q;q^2)_m} {(-aq;q^2)_m}
    \alpha_m(a,q^2), \label{ATNSBL}
  \end{gather} and 
\begin{gather} 
   \sum_{j\geqq 0} a^j q^{j(j+1)/2} (-1;q)_j \beta_j (a,q)
  =\frac{(-aq;q)_\infty}{(aq;q)_\infty} 
   \sum_{m=0}^\infty \frac{ a^m q^{m(m+1)/2} (-1;q)_m} {(-aq;q)_{m}}
    \alpha_m (a,q). \label{SSBL}
  \end{gather} 
\end{cor}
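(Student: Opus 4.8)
The plan is to obtain \eqref{WBL}, \eqref{ATNSBL}, and \eqref{SSBL} as three limiting specializations of Bailey's Lemma \eqref{BL}, in each case choosing $\rho_1$ and $\rho_2$ suitably and then letting $n\to\infty$. The only asymptotic input needed is the elementary limit $(\rho;q)_j=\prod_{i=0}^{j-1}(1-\rho q^i)\sim(-\rho)^j q^{j(j-1)/2}$ as $\rho\to\infty$, together with $(aq/\rho;q)_n\to 1$ as $\rho\to\infty$ and $(q;q)_{n-j}\to(q;q)_\infty$, $(aq;q)_{n+r}\to(aq;q)_\infty$ as $n\to\infty$; all of this is valid because $|q|<1$.

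For \eqref{WBL} I would let $\rho_1\to\infty$ and $\rho_2\to\infty$ in \eqref{BL}. On the left, the prefactor $\big((aq/\rho_1;q)_n(aq/\rho_2;q)_n\big)^{-1}$ and the factor $(aq/(\rho_1\rho_2);q)_{n-j}$ both tend to $1$, while $(\rho_1;q)_j(\rho_2;q)_j\big(aq/(\rho_1\rho_2)\big)^j\to a^j q^{j^2}$; on the right $(\rho_1;q)_r(\rho_2;q)_r\big(aq/(\rho_1\rho_2)\big)^r\big/\big((aq/\rho_1;q)_r(aq/\rho_2;q)_r\big)\to a^r q^{r^2}$. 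Then letting $n\to\infty$ turns the left side into $\frac{1}{(q;q)_\infty}\sum_{j\geqq 0}a^j q^{j^2}\beta_j(a,q)$ and the right side into $\frac{1}{(q;q)_\infty(aq;q)_\infty}\sum_{r\geqq 0}a^r q^{r^2}\alpha_r(a,q)$; multiplying through by $(q;q)_\infty$ gives \eqref{WBL}.

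For \eqref{SSBL} I would instead keep $\rho_1=-1$ fixed and let $\rho_2\to\infty$, then $n\to\infty$. Here $(\rho_1;q)_j=(-1;q)_j$ and $aq/\rho_1=-aq$, so the prefactor tends to $1/(-aq;q)_\infty$, while $(\rho_2;q)_j\big(aq/(\rho_1\rho_2)\big)^j\to(aq)^j q^{j(j-1)/2}=a^j q^{j(j+1)/2}$; the left side tends to $\frac{1}{(-aq;q)_\infty(q;q)_\infty}\sum_{j\geqq 0}a^j q^{j(j+1)/2}(-1;q)_j\beta_j(a,q)$ and the right side to $\frac{1}{(q;q)_\infty(aq;q)_\infty}\sum_{r\geqq 0}\frac{a^r q^{r(r+1)/2}(-1;q)_r}{(-aq;q)_r}\alpha_r(a,q)$, and cancelling $(q;q)_\infty$ and multiplying through by $(-aq;q)_\infty$ yields \eqref{SSBL}. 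For \eqref{ATNSBL} the same recipe applies after first replacing $q$ by $q^2$ everywhere in \eqref{BL}: now take $\rho_1=-q$ (so that $(\rho_1;q^2)_j=(-q;q^2)_j$ and $aq^2/\rho_1=-aq$), let $\rho_2\to\infty$ and $n\to\infty$, and the base-$q^2$ Pochhammer symbols collapse exactly as before; cancelling $(q^2;q^2)_\infty$ and multiplying through by $(-aq;q^2)_\infty$ produces \eqref{ATNSBL}.

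The one point genuinely requiring care --- and the place where a skeptical reader would press --- is the justification of these termwise passages to the limit inside the sums. I would handle it by observing that for $|q|<1$ every series involved converges absolutely and uniformly in the relevant parameter ranges, so dominated convergence permits interchanging $\lim$ with $\sum$; equivalently, one may fix $n$, carry out the $\rho_i$-limits on the (terminating) sums first, and only afterwards let $n\to\infty$. Once that is granted, the remainder is routine bookkeeping of $q$-Pochhammer limits and presents no obstacle.
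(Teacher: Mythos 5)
Your proposal is correct and follows essentially the same route as the paper, which obtains all three identities by letting $n$ and one of $\rho_1,\rho_2$ tend to infinity in Bailey's Lemma and specializing the other parameter to $\infty$, $-1$, or $-\sqrt{q}$ (with $q\to q^2$); your choices differ only by the symmetry of $\rho_1$ and $\rho_2$ and the order in which the base change $q\to q^2$ is performed. The limit computations and the justification for termwise passage to the limit are all in order.
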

\begin{proof}  First, let $n,\rho_1\to\infty$ in \eqref{BL}.  Then, to obtain
\eqref{WBL}, let $\rho_2\to\infty$; to obtain \eqref{SSBL}, set $\rho_2=-1$;
and finally to obtain \eqref{ATNSBL}, set $\rho_2=-\sqrt{q}$, and then replace
$q$ by $q^2$ throughout.\openbox
\end{proof}

Thus the substitution of any Bailey pair 
$(\alpha_n(a,q),\beta_n(a,q))$ into 
\eqref{WBL}, \eqref{ATNSBL}, or \eqref{SSBL} yields an $a$-RRT identity.
Bailey did exactly this in~\cite{Bailey1} and~\cite{Bailey2}.  
Setting $a=1$ or $a=q$, one obtains traditional Rogers-Ramanujan type
identities in the variable $q$ only.  Bailey's
student L.J. Slater \cite{Slater2} obtained a list of 
130 Rogers-Ramanujan type
identities (in $q$ only) in precisely this way.
   In \S\ref{GenBP}, we study a general Bailey pair for which several
special cases were considered by Bailey himself in~\cite{Bailey2}.  Next,
in \S\ref{qDiffEqns}, we derive $q$-difference equations for various sets
of $a$-RRT identities, and consider their partition
theoretic implications in \S\ref{PtnThms}.  The narrative is concluded with
some observations and open questions in \S\ref{concl}.  Finally,
an appendix containing 26 new double sum--product Rogers-Ramanujan
type identities is included.

\section{A Parametrized Bailey Pair}\label{GenBP}
In~\cite{Bailey1} and ~\cite{Bailey2}, Bailey considered several Bailey pairs
which are special cases of a more general Bailey pair involving additional
parameters $d$ and $k$:
\begin{thm}\label{ParamBP}
Let $\lambda = -\frac{3}{2}d^2 + dk + \frac 12 d$, $h = |\frac{2\lambda}{d}|$,
and  $t=d+h+2$.   Let
\begin{equation*} 
    \alpha_{d,k,m}(a,q) := 
        \left\{  \begin{array}{ll}
     \parbox{4cm}{\[ \frac{(-1)^r a^{(k-d)r} q^{(dk - d^2 + \frac d2)r^2 - \frac d2 r} 
       (aq^{2d};q^{2d})_r (a;q^d)_r}{(a;q^{2d})_r (q^d;q^d)_r }, \]}\\
            &\mbox{if $m= dr$, and} \\
      0,                                      &\mbox{otherwise,}
              \end{array} \right.
    \end{equation*}
and \begin{equation*}
    \hskip -2cm \beta_{d,k,m}(a,q) := 
      \left\{  \begin{array}{lr}
        \parbox{4cm}{\[
         \lim_{\tau\to 0}
         \frac{ {}_{t+1}W_{t}(a; \nu_1,\dots, \nu_h, \mu_1,\dots,\mu_d; q^d; \tau^h a^{k-d} q^{nd})}
              {(q,aq;q)_n}\]} & \\
          & \mbox{if $\lambda\geqq 0$,} \\
         \parbox{4cm}{\[
         \lim_{\tau\to 0}
         \frac{{}_{t+1}W_{t}(a; \delta_1,\dots, \delta_h,\mu_1,\dots,\mu_d;q^d; 
               \frac{a^{k-d} q^{nd}}{\tau^h})}{(q,aq;q)_n} \]} & \\
          & \mbox{if $\lambda< 0$,} 
              \end{array} \right.
    \end{equation*}
where $\nu_j = \frac{q^{\lambda/h}}{\tau}$, $\mu_j = q^{d-j-n}$,
$\delta_j = \tau a q^{d-\lambda/h}$, 
\begin{gather*} 
  {}_{s+1}W_{s}(a_1;a_4,a_5,\dots,a_{s+1};q,z) 
  = \hgs{s+1}{s}{a_1,qa_1^{\frac 12},-qa_1^{\frac 12},a_4,\dots,a_{s+1}}
       {a_1^{\frac 12},-a_1^{\frac 12},\frac{qa_1}{a_4},\dots,\frac{qa_1}{a_{s+1}}}{q}{z},
\end{gather*} and
\begin{equation*}
  \hgs{s+1}{s}{a_1,a_2,\dots,a_{s+1}}{b_1,b_2,\dots,b_s}{q}{z} =
  \sum_{r=0}^\infty \frac{ (a_1,a_2,\dots,a_{s+1};q)_r}{(q,b_1,b_2,\dots,b_s;q)_r} z^r.   
\end{equation*}
Then $(\alpha_{d,k,m}(a,q), \beta_{d,k,n}(a,q))$ form a Bailey pair.
\end{thm}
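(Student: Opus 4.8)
The plan is to verify the defining relation \eqref{BPdef} directly. Since $\alpha_{d,k,m}(a,q)$ is supported on multiples of $d$, writing $m=dr$ the claim reduces to
\[
  \beta_{d,k,n}(a,q)=\sum_{r\geqq 0}\frac{\alpha_{d,k,dr}(a,q)}{(q;q)_{n-dr}\,(aq;q)_{n+dr}}.
\]
One checks first that the sum on the right is finite: among $\mu_1,\dots,\mu_d$ there is an index $j\equiv-n\pmod d$ for which $(\mu_j;q^d)_r$ vanishes once $r$ is large, so the ${}_{t+1}W_t$ defining $\beta_{d,k,n}$ terminates and the limit $\tau\to 0$ is a limit of a fixed finite sum.

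I would then expand the right-hand side using $(aq;q)_{n+dr}=(aq;q)_n(aq^{n+1};q)_{dr}$ and the reflection formula $(q;q)_{n-dr}^{-1}=(q^{-n};q)_{dr}\,(-1)^{dr}q^{ndr-\binom{dr}{2}}/(q;q)_n$, which extracts the factor $1/(q,aq;q)_n$. Splitting every base-$q$ symbol of length $dr$ into $d$ base-$q^d$ symbols of length $r$ via $(x;q)_{dr}=\prod_{j=0}^{d-1}(xq^j;q^d)_r$ turns $(q^{-n};q)_{dr}/(aq^{n+1};q)_{dr}$ into precisely $\prod_{j=1}^d(\mu_j;q^d)_r$ divided by $\prod_{j=1}^d(q^da/\mu_j;q^d)_r$, i.e.\ the $\mu$-block of the very-well-poised series. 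Dually, collapsing the very-well-poised factor of that series by $(b;q)_r(-b;q)_r=(b^2;q^2)_r$ gives
\[
  \frac{(a;q^d)_r(q^da^{1/2};q^d)_r(-q^da^{1/2};q^d)_r}{(q^d;q^d)_r(a^{1/2};q^d)_r(-a^{1/2};q^d)_r}
  =\frac{(a;q^d)_r(aq^{2d};q^{2d})_r}{(q^d;q^d)_r(a;q^{2d})_r},
\]
which is exactly the $q$-shifted-factorial part of $\alpha_{d,k,dr}(a,q)$.

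After these reductions it remains to show that the $h$ auxiliary parameters together with the argument of the ${}_{t+1}W_t$ produce, in the limit $\tau\to 0$, the monomial $(-1)^ra^{(k-d)r}q^{(dk-d^2+\frac d2)r^2-\frac d2 r}$ once the already-extracted prefactor $(-1)^{dr}q^{ndr-\binom{dr}{2}}$ is accounted for. Since $\nu_j=q^{\lambda/h}/\tau$ (so $\lambda/h=\tfrac d2\,\mathrm{sgn}\,\lambda$) and $\delta_j=q^da/\nu_j$, every factor involved has the shape $1-c/\tau\sim-c/\tau$; the powers of $\tau$ cancel against the $\tau^{\pm hr}$ coming from the argument, leaving a pure power of $q$ and of $a$. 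Matching exponents then comes down to two arithmetic facts: $\lambda=(dk-d^2+\tfrac d2)-\tfrac{d^2}{2}$, immediate from the definition of $\lambda$, which fixes the quadratic $q$-exponent, and $h\equiv d+1\pmod 2$, immediate from $h=|2k+1-3d|$, which gives $(-1)^{hr}=(-1)^{(d+1)r}$; the linear $q$-exponent and the power of $a$ then fall out automatically.

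The main obstacle is the bookkeeping in this last step, and in particular handling the cases $\lambda\geqq 0$ and $\lambda<0$ on the same footing. When $\lambda\geqq 0$ the $\nu_j$ are numerator parameters that blow up as $\tau\to 0$ while the argument $\tau^ha^{k-d}q^{nd}$ tends to $0$; when $\lambda<0$ the roles are reversed — the $\delta_j\to 0$ sit in the numerator, the paired $\nu_j=q^da/\delta_j\to\infty$ sit in the denominator, and the argument $a^{k-d}q^{nd}/\tau^h$ tends to $\infty$. One must verify that both limits collapse to the \emph{same} monomial, so that the single closed form for $\alpha_{d,k,m}(a,q)$ is correct throughout; granting this, the two sides of the displayed identity agree term by term and the theorem follows.
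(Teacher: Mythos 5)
Your proposal is correct and follows essentially the same route as the paper: substitute the explicit $\alpha_{d,k,dr}$ into \eqref{BPdef}, use $(aq;q)_{n+dr}=(aq;q)_n(aq^{n+1};q)_{dr}$ together with the reflection $(q;q)_{n-dr}^{-1}=(-1)^{dr}q^{ndr-\binom{dr}{2}}(q^{-n};q)_{dr}/(q;q)_n$, split the length-$dr$ base-$q$ symbols into $d$ base-$q^d$ symbols, and recognize the remaining factor $q^{\lambda r^2}$ as the $\tau\to 0$ limit of $\tau^{\pm hr}$ times $h$ Pochhammer symbols (with the sign bookkeeping $(-1)^{hr}=(-1)^{(d+1)r}$ exactly as you note). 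The two cases $\lambda\geqq 0$ and $\lambda<0$ are handled in the paper precisely by the numerator/denominator swap you describe, so no further argument is needed.
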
 
 
\begin{remark} The notation above is quite dense, and so a few words of 
clarification are perhaps in order.  
$\lambda$ represents the co\"efficient of $r^2$ in the exponent of $q$ which arises
when $\alpha_{d,k,m}(a,q)$ is inserted into the RHS of~\eqref{BPdef}. $h$ is the number
of rising $q$-factorials necessary to write $q^{\lambda r^2}$ as a limit as $\tau\to 0$
of a power of $\tau$ times the rising $q$ factorials in base $q^{d}$.  For example, to
write $q^{4r^2}$ using base $q^2$, we find $h=4$ since 
 \[ q^{4r^2} = \lim_{\tau\to 0} \tau^{4r} (q/\tau;q^2)_r^4. \]  
$t$ is the total number of denominator entries in the resulting very-well poised
basic hypergeometric series.
\end{remark}

\begin{proof}[Proof of Theorem~\ref{ParamBP}]
\begin{eqnarray*}
  &&\beta_{d,k,n}(a,q)\\
 &=& \sum_{m=0}^n \frac{1}{(q;q)_{n-r} (aq;q)_{n+r}} 
\alpha_{d,k,m}(a,q)\\
    &=& \frac{1}{(q;q)_{n} (aq;q)_{n}} 
      \sum_{m=0}^n \frac{(-1)^m q^{nm +\frac m2 -\frac{m^2}{2}} (q^{-n};q)_m}{(aq^{n+1};q)_m} 
      \alpha_{d,k,m}(a,q) \\
    &=& \frac{1}{(q;q)_{n} (aq;q)_{n}} 
      \sum_{r=0}^{\lfloor n/d \rfloor} 
       \frac{(-1)^{dr} q^{ndr +\frac d2 r -\frac{d^2 }{2} r^2} (q^{-n};q)_{dr}}{(aq^{n+1};q)_{dr}} 
      \alpha_{d,k,dr}(a,q) \\
    &=& \sum_{r=0}^{\lfloor n/d \rfloor} 
       \frac{(-1)^{(d+1)r} a^{(k-d)r} q^{\lambda r^2 + ndr} 
          (a;q^d)_r (a q^{2d};q^{2d})_r (q^{-n};q)_{dr} }
       {(q;q)_{n} (aq;q)_{n} (q^d;q^d)_r (a; q^{2d})_r (aq^{n+1};q)_{dr}}
\end{eqnarray*}
If $\lambda\geqq 0$, this last expression
\begin{gather*}
  =\frac{1}{(q;q)_n (aq;q)_n}
    \lim_{\tau\to 0} 
       \sum_{r\geqq 0} \left\{ \frac{(a,q^d\sqrt{a},-q^d\sqrt{a};q^d)_r 
          (q^{\lambda/h}/\tau;q^d)_r^h}
          {(q^d,\sqrt{a},-\sqrt{a};q^d)_r 
           (\tau a  q^{d-\lambda/h};q^d)_r^h} \right. \\
     \qquad\qquad \left. \times\frac {(q^{d-1-n},q^{d-2-n},\dots, q^{-n};q^d)_r}
           {(aq^{n+1},aq^{n+2},\dots,aq^{n+d};q^d)_r} \tau^{hr} a^{(k-d)r} q^{ndr} \right\},
\end{gather*}
while if $\lambda<0$, we instead place $q^{-\lambda r^2}$ in the denominator:
\begin{gather*}
=\frac{1}{(q;q)_n (aq;q)_n} 
     \lim_{\tau\to 0} 
       \sum_{r\geqq 0} \left\{ \frac{ (a,q^d\sqrt{a},-q^d\sqrt{a};q^d)_r 
          (\tau a q^{d-\lambda/h};q^d)_r^h}
          {(q^d,\sqrt{a},-\sqrt{a};q^d)_r  
           (q^{\lambda/h}/\tau;q^d)_r^h} \right. \\
     \qquad\qquad \left. \times\frac {(q^{d-1-n},q^{d-2-n},\dots, q^{-n};q^d)_r}
           {(aq^{n+1},aq^{n+2},\dots,aq^{n+d};q^d)_r} \tau^{hr} a^{(k-d)r} q^{ndr} \right\}.
\end{gather*}\openbox
\end{proof}

The goal is
to find Bailey pairs which will give rise to attractive identities.
Bailey himself considered the special cases 
$\alpha_{d,k,m}(a,q)$ for $(d,k) = (1,2), (2,2), (2,3)$, and $(3,4)$~\cite[p. 5--6,
eqns. (i), (iv) with $f=0$, (iv) with $f\to\infty$, and (v) 
respectively]{Bailey2}.
Each of these four $(d,k)$ sets is particularly nice, as the
resulting expression for $\alpha_{d,k,r}(a,q)$, when substituted into~\eqref{BPdef},
is a finite product times a ${}_6W_5$ on base $q^d$, 
which is summable by Jackson's theorem~\cite[p. 238, eqn. (II.20)]{GR}.
Thus, $\beta_{d,k,n}(a,q)$ reduces to a finite product, and upon substituting it into
\eqref{WBL}, the left hand side of the resulting $a$-RRT
identity will be a single-fold sum.  
  
  In this way, upon letting $a\to 1$,
we may derive the first Rogers-Ramanujan 
identity~\eqref{RRa1} from $(d,k) = (1,2)$,
a Rogers' mod $10$ identity~\eqref{RogMod10-1}
from $(d,k) = (2,2)$,
a Rogers mod $14$ identity~\eqref{RogMod14-1}
from $(d,k) = (2,3)$,
and a Bailey-Dyson mod $27$ identity~\eqref{BaileyMod27-1}
from $(d,k) = (3,4)$. It was not mentioned by Bailey, but Euler's 
pentagonal number theorem~\cite[p. 11, Cor. 1.7]{GEA:top}
arises from the case $(d,k) = (1,1)$.  Similarly, by substituting
the Bailey pairs into \eqref{ATNSBL} and \eqref{SSBL}, and then
letting $a\to 1$, other identities from Slater's list may be derived.
One case that both Bailey and Slater seem to have missed is
the substitution of $(d,k)=(2,3)$ into \eqref{SSBL}, which immediately
yields \eqref{a-mod10SS} and then \eqref{mod10SS} when $a=1$.

  Note that, in fact, $d=1$ corresponds to the ``unit Bailey 
chain''~\cite{GEA:multiRR}.  Substituting the Bailey pairs corresponding
to the $d=1$ cases into \eqref{WBL} yields cases of Andrews' analytic 
generalization of the Rogers-Ramanujan identities for odd 
moduli~\cite{GEA:oddmoduli}; see \eqref{AGI}.

  Thus to search for new identities, we need to consider $d>1$.  Also, in order
to find $\beta_{d,k,n}$'s with relatively simple forms, 
$d+h$ should be kept
as small as possible since $\beta_n$ is a finite product times a 
${}_{d+h+3}W_{d+h+2}$, and the higher one looks in the hypergeometric
hierarchy, the more complicated things become.  
It appears that Bailey considered
all cases where $d+h=3$, and thus all of the summable ${}_6 W_5$'s.
The next best situation is where $d+h=5$, which corresponds to a
${}_8 W_7$ that can be transformed by Watson's 
$q$-analog of Whipple's Theorem~\cite[p. 242, eqn. (III.17)]{GR}:

  Consider the case $(d,k)=(2,4)$:
 \begin{eqnarray}
  &&\beta_{2,4,n}(a,q) \nonumber \\
  &=& \frac{1}{(q;q)_n (aq;q)_n}
       \sum_{r=0}^{\lfloor n/2 \rfloor} 
       \frac{(-1)^{r} a^{2r} q^{3r^2 + 2nr} 
          (a;q^2)_r (a q^{4};q^{4})_r (q^{-n};q)_{2r} }
       {(q^2;q^2)_r (a; q^{4})_r (aq^{n+1};q)_{2r}}\nonumber\\
   &=& \frac{1}{(q;q)_n (aq;q)_n} \nonumber
   \\ &&\times \lim_{\tau\to 0} 
    \hgs{8}{7}{a,q^2 \sqrt{a},-q^2 \sqrt{a}, \frac{q}{\tau}, 
      \frac{q}{\tau}, \frac{q}{\tau}, q^{1-n}, q^{-n}}
       {\sqrt{a},-\sqrt{a}, \tau aq, \tau aq, \tau a q, aq^{n+1},aq^{n+2}}{q^2}
        {\tau^{3} a^{2} q^{2n} }\nonumber\\
   &=& \frac{1}{(q;q)_n (aq;q)_n} \lim_{\tau\to 0} 
     \frac{ (aq^2,\tau aq^{n+1},\tau aq^n, aq^{2n+1}; q^2)_\infty}
     {(aq\tau, a q^{n+2}, a q^{n+1}, \tau a q^{2n};q^2)_\infty}\nonumber\\ &&\times
     \hgs{4}{3}{\tau^2 a, \frac{q}{\tau},q^{-n},q^{1-n}}
         {\tau a q, \tau a q, \frac{q^{2-2n}}{a\tau}}{q^2}{q^2}
  \qquad\mbox{\quad (by~\cite[p. 242, eqn. (III.17)]{GR})}\nonumber\\
   &=& \frac{(aq^2,aq^{2n+1};q^2)_\infty}{(q,aq;q)_n (aq^{n+1};q)_\infty}
     \sum_{r\geqq 0} \frac{(q^{-n};q)_{2r}}{(q^2;q^2)_r} a^r q^{2nr+r}\nonumber\\
   &=& \frac{1}{(aq;q^2)_n}
     \sum_{r\geqq 0} \frac{a^r q^{2r^2}}{(q^2;q^2)_r (q;q)_{n-2r}}. \label{beta24}
     \end{eqnarray}  
  
Analogous calculations allow us to find
\begin{eqnarray}
\beta_{2,1,n}(a,q) &=& \frac{q^{\binomial{n}{2}} }{(aq;q^2)_n} 
  \sum_{r\geqq 0} \frac{(-1)^r a^{-r} q^{r^2 - 2nr}}{(q^2;q^2)_r (q;q)_{n-2r}} 
  \label{beta21}\\
\beta_{3,3,n}(a,q) &=& \frac{1}{(a;q)_{2n} }
   \sum_{r\geqq 0}\frac{(-1)^r q^{\frac 32 r^2 - \frac 32 r} (a;q^3)_{n-r}}
     {(q^3;q^3)_r (q;q)_{n-3r}} \label{beta33}\\
\beta_{3,5,n}(a,q) &=& \frac{1}{(a;q)_{2n} }
   \sum_{r\geqq 0}\frac{a^r q^{3r^2} (a;q^3)_{n-r}}
     {(q^3;q^3)_r (q;q)_{n-3r}}. \label{beta35}
\end{eqnarray}

Once $d>3$, even if $d+h=5$, Watson's $q$-Whipple transformation~\cite[p. 242, eqn. (III.17)]{GR} 
is not applicable as the
resulting ${}_4\phi_3$ does not terminate.  In this case, we must use the more
general transformation~\cite[p. 246, eqn. (III.36)]{GR}.  
Let us now consider such a situation:

\begin{eqnarray}
&& \beta_{4,6,n} (a,q) \nonumber\\
&=& \frac{1}{(q,aq;q)_n} \lim_{\tau\to 0} 
   {}_8W_7 \left[a;\frac{q^2}{\tau^2},q^{1-n},q^{3-n},q^{-n},q^{2-n};q^4;
   \tau^2 a^2 q^{4n} \right] 
   \nonumber \\
   &=& \lim_{\tau\to 0} \frac{(aq^2,a q^{n-1}\tau, a q^{2n+1}, -a q^{2n+1}, 
      \tau^{\frac 12} a q^{n+1}, -\tau^{\frac 12}a q^{n+1})_\infty}
  {(q,aq;q)_n (a\tau, a q^{n+1}, a q^{n+2}, -aq^{n+2}, \tau a q^{2n}, -\tau a q^{2n}; q^2)_\infty}
  \nonumber \\
  &&\times \hgs{8}{7}
    {-aq^n,ia^{\frac 12} q^{2+\frac n2} ,-i a^{\frac 12} q^{2+\frac n2}, q^{1-n},\frac{q}{\tau},
  -\frac{q}{\tau},a^{\frac 12} q^{1+n},-a^{\frac 12} q^{1+n} }
  {ia^{\frac 12} q^{\frac n2},-ia^{\frac 12} q^{\frac n2},-aq^{2n+1},-\tau aq^{n+1}, \tau a q^{n+1},
  -qa^{\frac 12},qa^{\frac 12}}{q^2}{a q^{n-1}} \nonumber \\
  & &  \qquad\qquad\qquad\qquad\mbox{(by~\cite[p. 70, eqn. (3.5.10)]{GR})}\nonumber\\
  &=& \lim_{\tau\to 0} \frac{(aq^2,\tau^2 a q^{n-1},a q^{2n+1}, -a q^{2n+1}, \tau a q^{n+1}
  -\tau a q^{n+1};q^2)_\infty}
  {(q,aq;q)_n (\tau^2 a,a q^{n+1},a q^{n+2},-a q^{n+2}, \tau a q^{2n}, -\tau a q^{2n};q^2)_\infty}
  \nonumber \\
  && \times \left\{ \frac{(-aq^{n+2},q^{-n},-q^n a^{\frac 12},q^n a^{\frac 12};q^2)_\infty}
        {(-qa^{\frac 12},qa^{\frac 12},-a q^{2n+1}, \frac 1q; q^2)_\infty}
    \right. \nonumber \\
     && \qquad \times
  \hgs{4}{3}{\tau^2 a q^n, a^{\frac 12}q^{n+1},-a^{\frac 12} q^{n+1},q^{1-n}}
       {-\tau a q^{n+1},-\tau a q^{n+1}}{q^2}{q^2} \nonumber\\
  && \qquad+\frac{(-aq^{n+2},\tau^2 a q^n, a^{\frac 12} q^{n+1}, -a^{\frac 12} q^{n+1}, -\tau a q^n,
    \tau a q^n;q^2)_\infty}
    {(-\tau a q^{n+1},\tau a q^{n+1},-qa^{\frac 12} ,qa^{\frac 12}, -a q^{2n+1},a q^{2n+1},
      \tau^2 a q^{n-1},q;q^2)_\infty}\nonumber\\
   &&  \qquad\left. \times\hgs{4}{3}{q^{-n},-q^n a^{\frac 12},q^n a^{\frac 12},\tau^2 a q^{n-1}}
     {-\tau a q^n, \tau a q^n, q}{q^2}{q^2} \right\}\nonumber\\
   && \qquad\qquad\qquad\qquad\mbox{(by \cite[p. 246, eqn. (III.36)]{GR})}
     \nonumber\\
  &=& \frac{1}{(q;q)_n (aq;q^2)_n (aq^2;q^4)_\infty (q;q^2)_\infty} \nonumber \\
  && \times\left\{ -(q^{-n};q^2)_\infty (aq^{2n};q^4)_\infty \sum_{r\geqq 0} 
     \frac{(aq^{2n+2};q^4)_r (q^{1-n};q^2)_r q^{2r+1}}{(q;q)_{2r+1}} \right. \nonumber\\
  && \left. \qquad +(q^{1-n};q^2)_\infty (a q^{2n+2};q^4)_\infty \sum_{r\geqq 0}
      \frac{(aq^{2n};q^4)_r (q^{-n};q^2)_r q^{2r}}{(q;q)_{2r}} \right\} \nonumber
\end{eqnarray}
Notice that the first term vanishes for $n$ even and the second for $n$ odd.
Thus we conclude 
\begin{equation} \label{beta46even} 
\beta_{4,6,2m}(a,q) = \sum_{r\geqq 0} \frac{(-1)^{m+r} q^{r^2-m^2 + r - 2mr} (a;q^4)_{m+r}}
 {(a;q)_{4m} (q;q)_{2r} (q^2;q^2)_{m-r}} 
\end{equation} and
\begin{equation} \label{beta46odd}
\beta_{4,6,2m+1}(a,q) = \sum_{r\geqq 0} \frac{(-1)^{m+r} q^{r^2-m^2 + r - 2m-2mr} (a;q^4)_{m+r+1}}
 {(a;q)_{4m+2} (q;q)_{2r+1} (q^2;q^2)_{m-r}}.
\end{equation}

\section{$q$-difference equations} \label{qDiffEqns} 
For each of the Bailey pairs derived in \S\ref{GenBP}, we are able to obtain one
$a$-RRT identity from each of \eqref{WBL}, \eqref{ATNSBL}, and \eqref{SSBL}.  
However, in general there are a 
set of $k$ identities associated with a given $(d,k)$.  We will use $q$-difference equations
to establish complete sets of $k$ identities for various $(d,k)$ considered in \S\ref{GenBP},
as well as those $(d,k)$ considered by Bailey~\cite{Bailey2}.

\subsection{Expressions for the right hand sides and their $q$-difference equations}
\begin{definition} For $k\geqq 1$, and $1\leqq i \leqq k$,
\begin{gather} 
  Q_{d,k,i}(a) := Q_{d,k,i}(a,q)\nonumber\\:=\frac{1}{(aq;q)_\infty} 
    \sum_{n\geqq 0} \frac{(-1)^n a^{kn} q^{(dk + \frac d2)n^2 +(k-i+\frac 12)dn} 
       (1-a^i q^{(2n+1)di})(aq^d;q^d)_n}
    {(q^d;q^d)_n}.\label{Qdef}
\end{gather}
\end{definition}

\begin{thm}\label{qdiffs}
The following $q$-difference equations are valid:
\begin{equation}
 Q_{d,k,1}(a) = \frac{1}{(aq;q)_{d-1}} Q_{d,k,k}(aq^d) \label{qdiff1} 
\end{equation} and for $2\leqq i \leqq k$,
\begin{equation} \label{qdiff2}
Q_{d,k,i}(a) = Q_{d,k,i-1}(a) + \frac{a^{i-1} q^{(i-1)d}}
{(aq;q)_{d-1}} Q_{d,k,k-i+1}(aq^d).
\end{equation}
\end{thm}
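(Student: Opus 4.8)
The plan is to clear the infinite products and reduce both \eqref{qdiff1} and \eqref{qdiff2} to a pair of termwise reindexings of the defining series. Put $f_{d,k,i}(a):=(aq;q)_\infty Q_{d,k,i}(a)$, so that
\[
  f_{d,k,i}(a)=\sum_{n\geqq 0}\frac{(-1)^n a^{kn}q^{(dk+\frac d2)n^2+(k-i+\frac12)dn}\bigl(1-a^iq^{(2n+1)di}\bigr)(aq^d;q^d)_n}{(q^d;q^d)_n}.
\]
Using $(aq;q)_\infty=(aq;q)_{d-1}(1-aq^d)(aq^{d+1};q)_\infty$ together with $Q_{d,k,j}(aq^d)=f_{d,k,j}(aq^d)/(aq^{d+1};q)_\infty$, one sees that \eqref{qdiff1} is equivalent to $f_{d,k,1}(a)=(1-aq^d)\,f_{d,k,k}(aq^d)$, and \eqref{qdiff2} is equivalent to $f_{d,k,i}(a)-f_{d,k,i-1}(a)=a^{i-1}q^{(i-1)d}(1-aq^d)\,f_{d,k,k-i+1}(aq^d)$. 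I would also observe that the factor $1-a^iq^{(2n+1)di}$ vanishes identically at $i=0$, so with the natural convention $Q_{d,k,0}(a)\equiv 0$, equation \eqref{qdiff1} is literally the $i=1$ instance of \eqref{qdiff2}; thus it suffices to establish \eqref{qdiff2} for $1\leqq i\leqq k$, though I would still display \eqref{qdiff1} separately because its verification uses the slightly cleaner splitting noted below.

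To prove \eqref{qdiff2} I would expand $f_{d,k,i}(a)-f_{d,k,i-1}(a)$ termwise, factor out $(-1)^n a^{kn}q^{(dk+d/2)n^2}(aq^d;q^d)_n/(q^d;q^d)_n$, and regroup the surviving bracket as
\[
  q^{(k-i+\frac12)dn}\bigl(1-q^{dn}\bigr)+a^{i-1}q^{(k+i-\frac12)dn+(i-1)d}\bigl(1-aq^{(n+1)d}\bigr),
\]
a one-line identity. In the first resulting sum $S_1$ I use $(1-q^{dn})/(q^d;q^d)_n=1/(q^d;q^d)_{n-1}$ (the $n=0$ term dropping out) and then shift $n\mapsto n+1$; in the second sum $S_2$ I use $(1-aq^{(n+1)d})(aq^d;q^d)_n=(aq^d;q^d)_{n+1}$ directly. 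On the other side I substitute $a\mapsto aq^d$ and $i\mapsto k-i+1$ into $f_{d,k,\cdot}$, apply $(1-aq^d)(aq^{2d};q^d)_n=(aq^d;q^d)_{n+1}$, and split the factor $1-a^{k-i+1}q^{2d(k-i+1)(n+1)}$ into two pieces; the ``leading'' piece is visibly $S_2$, and the ``subtracted'' piece must be $S_1$. Checking that last identification is the only place any attention is needed: after writing every summand in the common normal form consisting of a sign $(-1)^n$, a power of $a$, the factor $q^{(dk+d/2)n^2}$ times $q$ to a power linear in $n$, and the ratio $(aq^d;q^d)_{n+1}/(q^d;q^d)_n$, it reduces to the linear identities $(k+i-\tfrac12)d+2d(k-i+1)=3dk+\tfrac{3d}{2}-id$ (coefficient of $n$) and $(i-1)d+2d(k-i+1)=2dk+d-id$ (constant term), both immediate. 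For \eqref{qdiff1} the analogous splitting is $1-aq^{(2n+1)d}=(1-q^{dn})+q^{dn}(1-aq^{(n+1)d})$, and after the same reindexings the exponent bookkeeping reduces to the trivialities $(k+\tfrac12)d=dk+\tfrac d2$ and $(3dk+\tfrac d2)n+2dk=(dk+\tfrac d2)n+2dk(n+1)$.

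I do not anticipate a genuine obstacle: once the $(aq;q)_\infty$, $(aq;q)_{d-1}$, and $(aq^{d+1};q)_\infty$ factors have been cleared, the whole argument is a finite pair of reindexings of absolutely convergent $q$-series. The two points that do demand care are (i) that the summands annihilated by the factor $1-q^{dn}$ at $n=0$ are handled so the shift $n\mapsto n+1$ is legitimate, and (ii) not mistranscribing the several exponents of $q$ that are quadratic in $n$; forcing every summand into the normal form above makes the remaining verification a mechanical comparison of the power of $a$ and the linear exponent in each sum.
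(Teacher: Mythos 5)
Your proof of \eqref{qdiff2} is correct and is essentially the computation in the paper: the same regrouping of the bracket into $q^{(k-i+\frac12)dn}(1-q^{dn})+a^{i-1}q^{(k+i-\frac12)dn+(i-1)d}(1-aq^{(n+1)d})$, the same shift $n\mapsto n+1$ in the first sum, the same absorption of $(1-aq^{(n+1)d})$ into $(aq^d;q^d)_{n+1}=(1-aq^d)(aq^{2d};q^d)_n$, and the two exponent checks you record are exactly the ones that need to be made. Where you genuinely diverge is in \eqref{qdiff1}: the paper first proves an auxiliary lemma rewriting $Q_{d,k,k}(a)$ with the factor $(1-aq^{2dn})(a;q^d)_n/(1-a)$ in place of $(1-a^kq^{(2n+1)dk})(aq^d;q^d)_n$, after which \eqref{qdiff1} is read off by substituting $a\mapsto aq^d$; you instead observe that $Q_{d,k,0}(a)\equiv 0$ (since $1-a^0q^0=0$) makes \eqref{qdiff1} the $i=1$ instance of \eqref{qdiff2}, or equivalently run the splitting $1-aq^{(2n+1)d}=(1-q^{dn})+q^{dn}(1-aq^{(n+1)d})$ directly. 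Both routes are the same reindexing argument in disguise --- the paper's lemma is proved by precisely that splitting --- but yours is a mild streamlining: it unifies the two difference equations into one family and dispenses with the intermediate lemma, at the cost of the separate (and reusable) closed form for $Q_{d,k,k}$ that the paper's lemma provides. No gaps; the only care points are the ones you already flag (the vanishing $n=0$ term before the shift, and the exponent bookkeeping).
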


Before proving Theorem~\ref{qdiffs}, we need the following lemma:
\begin{lem} \label{Qdkk}
\begin{equation*} 
  Q_{d,k,k}(a) = \frac{1}{(aq;q)_\infty} 
    \sum_{n\geqq 0} \frac{(-1)^n a^{kn} q^{(dk + \frac d2)n^2 -\frac d2 n} 
       (1-aq^{2dn})(a;q^d)_n}
    {(1-a)(q^d;q^d)_n}
\end{equation*}
\end{lem}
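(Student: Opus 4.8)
The plan is to derive the asserted form of $Q_{d,k,k}(a)$ directly from \eqref{Qdef} by expanding the well-poised factor $1-a^{k}q^{(2n+1)dk}$, reindexing the piece it produces, and recombining term by term. Abbreviating $Q:=dk+\tfrac d2$ for the coefficient of $n^{2}$, taking $i=k$ in \eqref{Qdef} gives
\[
(aq;q)_\infty\, Q_{d,k,k}(a)=\sum_{n\geqq 0}\frac{(-1)^{n}a^{kn}q^{Qn^{2}+\frac d2 n}(aq^{d};q^{d})_{n}}{(q^{d};q^{d})_{n}}\bigl(1-a^{k}q^{(2n+1)dk}\bigr).
\]
The key preliminary observation is the purely arithmetic identity
\[
Qn^{2}+\tfrac d2 n+(2n+1)dk=Q(n+1)^{2}-\tfrac d2 (n+1),
\]
so that distributing the parenthesis writes the right-hand side as $\sum_{n\geqq 0}v_{n}-\sum_{n\geqq 0}w_{n}$, where $v_{n}$ denotes the term carrying the Gaussian $q^{Qn^{2}+\frac d2 n}$ and
\[
w_{n}=\frac{(-1)^{n}a^{k(n+1)}q^{Q(n+1)^{2}-\frac d2 (n+1)}(aq^{d};q^{d})_{n}}{(q^{d};q^{d})_{n}}.
\]

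Next I would shift the index in the $w$-sum by $n\mapsto n-1$; since every series here converges absolutely for $|q|<1$ this rearrangement is legitimate, and it gives $-\sum_{n\geqq 0}w_{n}=\sum_{n\geqq 1}(-1)^{n}a^{kn}q^{Qn^{2}-\frac d2 n}(aq^{d};q^{d})_{n-1}/(q^{d};q^{d})_{n-1}$. Adding this to $\sum_{n\geqq 0}v_{n}$, I would handle $n=0$ separately (both contributions equal $1$) and, for $n\geqq 1$, factor out the common quantity $(-1)^{n}a^{kn}q^{Qn^{2}}(aq^{d};q^{d})_{n-1}/(q^{d};q^{d})_{n-1}$ using $\tfrac{(aq^{d};q^{d})_{n}}{(q^{d};q^{d})_{n}}=\tfrac{(aq^{d};q^{d})_{n-1}}{(q^{d};q^{d})_{n-1}}\cdot\tfrac{1-aq^{dn}}{1-q^{dn}}$. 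The leftover scalar simplifies elementarily,
\[
\frac{q^{\frac d2 n}(1-aq^{dn})}{1-q^{dn}}+q^{-\frac d2 n}=\frac{q^{-\frac d2 n}(1-aq^{2dn})}{1-q^{dn}},
\]
and folding the denominator $1-q^{dn}$ back into the $q^{d}$-factorial while using $(aq^{d};q^{d})_{n-1}=(a;q^{d})_{n}/(1-a)$ turns the $n$-th term into $(-1)^{n}a^{kn}q^{Qn^{2}-\frac d2 n}(1-aq^{2dn})(a;q^{d})_{n}/\bigl((1-a)(q^{d};q^{d})_{n}\bigr)$. Since the $n=0$ term of this expression is also $1$, summing over $n\geqq 0$ and dividing by $(aq;q)_\infty$ produces exactly the right-hand side of the lemma.

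I do not expect any serious obstacle: the argument is a single reindexing together with two elementary algebraic simplifications. The only points requiring care are verifying the exponent identity $Qn^{2}+\tfrac d2 n+(2n+1)dk=Q(n+1)^{2}-\tfrac d2(n+1)$ — it is precisely this that converts the shifted summation variable back into the ``$-\tfrac d2 n$'' Gaussian appearing on the right — and isolating the $n=0$ term so the reindexed tail legitimately begins at $n=1$.
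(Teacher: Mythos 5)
Your proposal is correct and is essentially the paper's own proof run in the opposite direction: the paper starts from the asserted right-hand side and uses the splitting $1-aq^{2dn}=q^{dn}(1-aq^{dn})+(1-q^{dn})$ plus an index shift to recover $(aq;q)_\infty Q_{d,k,k}(a)$, whereas you start from the definition of $Q_{d,k,k}(a)$, expand $1-a^kq^{(2n+1)dk}$, shift, and recombine via the mirror-image identity $\frac{q^{\frac d2 n}(1-aq^{dn})}{1-q^{dn}}+q^{-\frac d2 n}=\frac{q^{-\frac d2 n}(1-aq^{2dn})}{1-q^{dn}}$. All the exponent bookkeeping checks out, so no changes are needed.
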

\begin{proof}
\begin{eqnarray*}
   && \sum_{n\geqq 0} \frac{(-1)^n a^{kn} q^{(dk + \frac d2)n^2 -\frac d2 n} 
       (1-aq^{2dn})(a;q^d)_n}
    {(1-a)(q^d;q^d)_n} \\
  &=& \sum_{n\geqq 0} \frac{(-1)^n a^{kn} q^{(dk + \frac d2)n^2 -\frac d2 n} 
    (a;q^d)_n} {(1-a)(q^d;q^d)_n} \Big\{ q^{dn}(1-aq^{dn}) + (1-q^{dn}) \Big\} \\
  &=& \sum_{n\geqq 0} \frac{(-1)^n a^{kn} q^{(dk + \frac d2)n^2 +\frac d2 n} 
       (a;q^d)_{n+1}}
    {(1-a)(q^d;q^d)_n} \\
  &&  \qquad+\sum_{n\geqq 1} \frac{(-1)^n a^{kn} q^{(dk + \frac d2)n^2 -\frac d2 n} (aq^d;q^d)_{n-1}}
    {(q^d;q^d)_{n-1}} \\
 &=& \sum_{n\geqq 0} \frac{(-1)^n a^{kn} q^{(dk + \frac d2)n^2 +\frac d2 n} 
       (aq^d;q^d)_n}
    {(q^d;q^d)_n} \\
  &&  \qquad -\sum_{n\geqq 0} \frac{(-1)^n a^{kn+k} 
  q^{(dk + \frac d2)n^2 +(2dk +\frac d2) n + dk} (aq^d;q^d)_n}
    {(q^d;q^d)_n} \\
 &=& \sum_{n\geqq 0} \frac{(-1)^n a^{kn} q^{(dk + \frac d2)n^2 +(\frac d2)n} 
       (aq^d;q^d)_n (1-a^k q^{(2n+1)dk})}
    {(q^d;q^d)_n}\\
 &=& (aq;q)_\infty Q_{d,k,k}(a).
\end{eqnarray*}\openbox 
\end{proof}

\begin{proof}{Proof of \eqref{qdiff1}}
\begin{eqnarray*}
&&\frac{1}{(aq;q)_{d-1}} Q_{d,k,k}(aq^d) \\
&=& \frac{1}{(aq;q)_{d-1} (aq^{d+1};q)_\infty} \\
&& \qquad\times \sum_{n\geqq 0} \frac{(-1)^n a^{kn} q^{(dk + \frac d2)n^2 +(dk-\frac d2) n} 
       (1-aq^{(2n+1)d})(aq^d;q^d)_n}{(1-aq^d)(q^d;q^d)_n} \\ 
&&   \hskip 5cm \mbox{(by Lemma~\ref{Qdkk}) }\\
&=& \frac{1}{(aq;q)_\infty} 
    \sum_{n\geqq 0} \frac{(-1)^n a^{kn} q^{(dk + \frac d2)n^2 +(dk-\frac d2) n} 
       (1-aq^{(2n+1)d})(aq^d;q^d)_n}{(q^d;q^d)_n}\\
  &=&Q_{d,k,1}(a).
\end{eqnarray*}\openbox
\end{proof}

\begin{proof}{Proof of \eqref{qdiff2}}
\begin{eqnarray*}
& & Q_{d,k,i}(a) - Q_{d,k,i-1}(a)\\
&=& \frac{1}{(aq;q)_\infty} 
    \sum_{n=0}^\infty \frac{(-1)^n a^{kn} q^{(k+\frac 12)dn^2 + (k+\frac 12)dn} 
    (aq^d;q^d)_n }{(q^d;q^d)_n} \\
& & \qquad \times\Big( q^{-idn} (1-a^i q^{(2n+1)di}) 
  - q^{dn(1-i)}(1-a^{i-1} q^{(2n+1)d(i-1)}) \Big) \\
&=& \frac{1}{(aq;q)_\infty} 
    \sum_{n=0}^\infty \frac{(-1)^n a^{kn} q^{(k+\frac 12)dn^2 + (k+\frac 12)dn} 
    (aq^d;q^d)_n }{(q^d;q^d)_n} \\
& & \qquad \times\Big( q^{-idn} (1-q^{dn}) + a^{i-1}q^{d(n+1)(i-1)}(1-aq^{d(n+1)}) 
\Big)\\
&=& \frac{1}{(aq;q)_\infty} \Bigg( 
    \sum_{n=1}^\infty \frac{(-1)^n a^{kn} q^{(k+\frac 12)dn^2 + (k-i+\frac 12)dn} 
    (aq^d;q^d)_n } {(q^d;q^d)_{n-1}} \\
& & \qquad +\sum_{n=0}^\infty \frac{(-1)^n a^{kn+i-1}
   q^{(k + \frac 12  )dn^2 + (k + \frac 12)dn +d(n+1)(i-1)} 
        (aq^d;q^d)_{n+1} }
        {(q^d;q^d)_n} \Bigg)\\
&=& \frac{1}{(aq;q)_\infty} \Bigg( 
    -\sum_{n=0}^\infty \frac{(-1)^n a^{kn+k} 
    q^{(k + \frac 12)dn^2 + (3k -i + \frac 32)dn + d(2k-i+1)} (aq^d;q^d)_{n+1} }
      {(q^d;q^d)_{n}} \\
& & \qquad +\sum_{n=0}^\infty \frac{(-1)^n a^{kn+i-1} 
q^{ (k + \frac 12) dn^2 + (k+i-\frac 12)dn + d(i-1)} 
        (aq^d;q^d)_{n+1} }
        {(q^d;q^d)_n} \Bigg)\\ 
&=& \frac{a^{i-1} q^{d(i-1)} (1-aq^d) }{(aq;q)_d (aq^{d+1};q)_\infty} 
    \Bigg( 
    \sum_{n=0}^\infty \frac{(-1)^n a^{kn} 
    q^{(k + \frac 12)dn^2 + (k + i - \frac 12)dn } (aq^{2d};q^d)_{n} }
      {(q^d;q^d)_{n}} \\
& & \qquad -\sum_{n=0}^\infty \frac{(-1)^n a^{kn+k-i+1} 
q^{ (k + \frac 12) dn^2 + (3k-i+\frac 32)dn +2d(k-i+1)} 
        (aq^{2d};q^d)_n }
        {(q^d;q^d)_n} \Bigg)\\ 
&=& \frac{a^{i-1} q^{d(i-1)}}{(aq;q)_{d-1} (aq^{d+1};q)_\infty}   
\sum_{n=0}^\infty \frac{(-1)^n a^{kn} q^{(k + \frac 12)dn^2 + (k+i-\frac 12)dn} (aq^{2d};q^d)_n}
     {(q^d;q^d)_n} \\
& & \qquad\qquad\qquad  \times ( 1-a^{k-i+1} q^{2d(n+1)(k-i+1)})  \\
&=& \frac{a^{i-1} q^{d(i-1)}}{(aq;q)_{d-1}} Q_{d,k,k-i+1} (aq^d).
\end{eqnarray*}\openbox
\end{proof}

\begin{remark} Note that for $1\leqq i\leqq k$, $Q_{d,k,i}(0,q) = 1$ which, together with
\eqref{qdiff1} and \eqref{qdiff2} uniquely determine $Q_{d,k,i}(a,q)$ as a power series
in $a$ and $q$.  In \S\ref{LHS}, we will show that certain functions $F_{d,k,i}(a,q)$
satisfy the same recurrence and initial conditions as the $Q_{d,k,i}(a,q)$ for various
values of $d$ and $k$, thus yielding collections of $a$-RRT identities.  Then, in 
\S\ref{PtnThms}, we will see that generating functions for certain classes of partitions
satisfy those same recurrences and initial conditions, thus providing partition identities.
\end{remark}

Rogers-Ramanujan type identities (in $q$ only) are perhaps more 
aesthetically pleasing than their
$a$-RRT counterparts because their right hand sides are 
expressible as infinite products.  Accordingly, we prove the following
proposition for later use.
\begin{prop}\label{infprod}
\begin{equation}
  Q_{d,k,i}(1) = \frac{(q^{id},q^{(2k-i+1)d},q^{(2k+1)d};q^{(2k+1)d})_\infty}{(q;q)_\infty}
\end{equation}
\end{prop}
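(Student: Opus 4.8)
The plan is to reduce $Q_{d,k,i}(1)$ to a bilateral theta series and evaluate it by the Jacobi triple product identity. First I would substitute $a=1$ directly into \eqref{Qdef}. Then $(aq^d;q^d)_n$ and $(q^d;q^d)_n$ coincide and cancel, while $(aq;q)_\infty$ becomes $(q;q)_\infty$, so the definition collapses to
\[
  Q_{d,k,i}(1)=\frac{1}{(q;q)_\infty}\sum_{n\geqq 0}(-1)^n q^{(dk+\frac d2)n^2+(k-i+\frac12)dn}\bigl(1-q^{(2n+1)di}\bigr),
\]
and the entire task becomes the evaluation of this sum.

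Next I would split the summand across the factor $1-q^{(2n+1)di}$ into $S_1-S_2$, where $S_1=\sum_{n\geqq 0}(-1)^n q^{(dk+\frac d2)n^2+(k-i+\frac12)dn}$ and $S_2$ has the same quadratic term but linear coefficient $(k+i+\frac12)d$ and an extra constant $di$ in the exponent of $q$. The crux is to apply the reflection $n\mapsto -n-1$ to $S_2$: a short computation shows that the quadratic coefficient is unchanged, the linear coefficient becomes $(k-i+\frac12)d$, and the constant $di$ cancels exactly; combined with $(-1)^{-n-1}=-(-1)^n$ this identifies $S_2$ with $-\sum_{n\leqq -1}(-1)^n q^{(dk+\frac d2)n^2+(k-i+\frac12)dn}$. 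Hence $S_1-S_2=\sum_{n\in\mathbb Z}(-1)^n q^{(dk+\frac d2)n^2+(k-i+\frac12)dn}$.

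Finally I would evaluate this bilateral series by the Jacobi triple product. Taking $p=q^{d(2k+1)/2}$ so that $p^{\,n^2}=q^{(dk+\frac d2)n^2}$, and absorbing $(-1)^n q^{(k-i+\frac12)dn}$ into $z^{\,n}$ with $z=-q^{d(2k-2i+1)/2}$, the identity $\sum_{n\in\mathbb Z}p^{\,n^2}z^{\,n}=(p^2;p^2)_\infty(-zp;p^2)_\infty(-z^{-1}p;p^2)_\infty$ gives the three bases $p^2=q^{(2k+1)d}$, $-zp=q^{(2k-i+1)d}$, and $-z^{-1}p=q^{id}$, that is, exactly $(q^{id},q^{(2k-i+1)d},q^{(2k+1)d};q^{(2k+1)d})_\infty$. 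Dividing by $(q;q)_\infty$ yields the proposition. There is no real obstacle here: the only step demanding care is verifying that the reflection $n\mapsto -n-1$ produces the claimed cancellations, and this works precisely because in \eqref{Qdef} the linear exponent $(k-i+\frac12)dn$ and the shift $(2n+1)di$ sit in complementary position about the apex of the quadratic form. (For $1\leqq i\leqq k$ one also notes in passing that $0<id<(2k-i+1)d<(2k+1)d$, so the three theta arguments are genuinely distinct.)
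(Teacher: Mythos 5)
Your proof is correct and follows essentially the same route as the paper: set $a=1$, split the sum over the factor $1-q^{(2n+1)di}$, re-index one piece to assemble a bilateral theta series, and evaluate by the Jacobi triple product. Your single reflection $n\mapsto -n-1$ is just the composition of the paper's shift $n\mapsto n-1$ on one sum and negation $n\mapsto -n$ on the other, so the two arguments are the same in substance.
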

\begin{proof}
\begin{eqnarray*}
 &&(q;q)_\infty Q_{d,k,i}(1) \\
 &=& \sum_{n\geqq 0} (-1)^n q^{(dk + \frac d2)n^2 +(k-i+\frac 12)dn} 
       (1- q^{(2n+1)di})\\
 &=&  \sum_{n\geqq 0} (-1)^n q^{(dk + \frac d2)n^2 +(dk-di+\frac d2)n} 
   - (-1)^n q^{(dk + \frac d2)n^2 +(dk+ di+\frac d2 )n +di}\\
 &=&  \sum_{n\geqq 0}  (-1)^n q^{(dk + \frac d2)n^2 -(di-dk-\frac d2)n} 
   + \sum_{n=1}^\infty (-1)^n q^{(dk + \frac d2)n^2 +(di-dk-\frac d2)n}\\ 
 &=& \sum_{n=-\infty}^\infty (-1)^n  q^{(dk + \frac d2)n^2 - (dk-di+\frac d2)n}\\
 &=& (q^{di},q^{2dk-di+ d}, q^{2dk+d}; q^{2dk+d})_\infty \\
 &&  \hskip 1cm\mbox{(by Jacobi's triple product identity~\cite[p. 21, Theorem 2.8]{GEA:top})}
\end{eqnarray*}\openbox
\end{proof}

\subsection{Expressions for the left hand sides and their $q$-difference equations}
\label{LHS}
We now work out the $q$-difference equations associated with the left
hand
sides of various $a$-RRT identities.
\subsubsection{The case $(d,k)=(2,2)$}
\hfil\break
\begin{definition}
 \begin{gather*}
  F_{2,2,1}(a):= F_{2,2,1}(a,q):= 
    \sum_{n=0}^\infty \frac{ a^n q^{\frac 32 n^2 + \frac 32 n}} {(aq;q^2)_{n+1} (q;q)_n }.
   \label{F10_1}\\
  F_{2,2,2}(a):= F_{2,2,2}(a,q):= 
    \sum_{n=0}^\infty \frac{a^n q^{\frac 32 n^2 - \frac 12 n}} {(aq;q^2)_n (q;q)_n} 
    \label{F10_2}\\
  F^*_{2,2,2}(a):= F^*_{2,2,2}(a,q):= 
     \sum_{n=0}^\infty \frac{a^n q^{\frac 32 n^2 + \frac 12 n}} {(aq;q^2)_{n+1} (q;q)_n} 
    \label{F10-_2*}
 \end{gather*}
\end{definition} 
 \begin{lem} $ F_{2,2,2}(a) = F^*_{2,2,2}(a).$
 \end{lem}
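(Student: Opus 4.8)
The plan is to prove $F_{2,2,2}(a) = F^*_{2,2,2}(a)$ by showing both sides have the same power series expansion, using a $q$-series manipulation on the sum defining $F_{2,2,2}(a)$ to absorb the extra factor $(1 - aq^{2n+1})$ that distinguishes the two denominators. Writing out the difference, we have
\[
 F_{2,2,2}(a) - F^*_{2,2,2}(a) = \sum_{n\geqq 0} \frac{a^n q^{\frac 32 n^2 - \frac 12 n}}{(aq;q^2)_{n+1}(q;q)_n}\bigl( (1-aq^{2n+1}) - q^n \bigr),
\]
since $(aq;q^2)_{n+1} = (aq;q^2)_n(1-aq^{2n+1})$ and $q^{\frac 32 n^2 + \frac 12 n} = q^{\frac 32 n^2 - \frac 12 n}\cdot q^n$. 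The bracket simplifies to $(1-q^n) - aq^{2n+1}$, so the difference splits into two sums. In the first, the factor $1-q^n$ cancels against $(q;q)_n = (q;q)_{n-1}(1-q^n)$ and kills the $n=0$ term, leaving a sum over $n\geqq 1$; reindexing $n\mapsto n+1$ should turn it into something matching the second sum.

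More precisely, I expect the first sum, after reindexing, to become
\[
 \sum_{n\geqq 0} \frac{a^{n+1} q^{\frac 32 n^2 + \frac 52 n + 1}}{(aq;q^2)_{n+2}(q;q)_n},
\]
while the second sum is
\[
 \sum_{n\geqq 0} \frac{a^{n+1} q^{\frac 32 n^2 + \frac 32 n + 1}}{(aq;q^2)_{n+1}(q;q)_n}.
\]
To reconcile these I would write $(aq;q^2)_{n+2} = (aq;q^2)_{n+1}(1-aq^{2n+3})$ in the first and try to combine; alternatively, it may be cleaner to pull a common factor out and show the remaining telescoping sum vanishes. The exponents $\frac 32 n^2 + \frac 52 n$ versus $\frac 32 n^2 + \frac 32 n$ differ by $n$, which suggests another round of the same cancellation trick (splitting $q^n$ or $1-q^n$ off), so the argument is likely a finite induction-free telescoping once the bookkeeping is set up correctly.

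The main obstacle is purely the bookkeeping: keeping the quadratic exponents of $q$, the shifted Pochhammer symbols $(aq;q^2)_{n+j}$, and the reindexing consistent so that the two residual sums genuinely cancel term by term rather than merely ``looking similar.'' An alternative, possibly safer route is to verify that both $F_{2,2,2}(a)$ and $F^*_{2,2,2}(a)$ satisfy the same first-order $q$-difference equation in $a$ together with the same value at $a=0$ (both reduce to $\sum_n q^{\frac 32 n^2 \pm \frac 12 n}/(q;q)_n$ type series with a common normalization, or one can check the constant term is $1/(1-q)\cdot$something matching); since a first-order recurrence plus initial condition determines a power series in $a$ uniquely (as noted in the remark following Theorem~\ref{qdiffs}), this would also close the argument. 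I would attempt the direct telescoping first and fall back on the $q$-difference-equation approach if the index juggling proves unwieldy.
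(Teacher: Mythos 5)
Your setup is exactly the paper's first line, but your primary plan stalls where you suspect it might: the split $\bigl((1-q^n)\bigr) - aq^{2n+1}$ does \emph{not} lead to a one-step telescoping. After cancelling $1-q^n$ against $(q;q)_n$ and shifting $n\mapsto n+1$, the first sum has denominator $(aq;q^2)_{n+2}(q;q)_n$ and exponent $\tfrac32 n^2+\tfrac52 n+1$, while the second has $(aq;q^2)_{n+1}(q;q)_n$ and $\tfrac32 n^2+\tfrac32 n+1$ --- and if you run ``another round of the same trick'' on this difference you just reproduce the identical mismatch one order higher in $a$, forever. What your route actually yields is the functional equation $G(a) = -\tfrac{aq}{1-aq}\,G(aq^2)$ for $G:=F_{2,2,2}-F^*_{2,2,2}$ (your two residual sums are $\tfrac{aq}{1-aq}F^*_{2,2,2}(aq^2)$ and $\tfrac{aq}{1-aq}F_{2,2,2}(aq^2)$ respectively), from which $G\equiv 0$ follows only by a power-series order argument: any nonzero $G$ would have $a$-order exceeding every integer. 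That is a correct proof, and it is essentially your stated fallback, but it is not the ``finite induction-free telescoping'' you predicted.

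The paper's proof closes in one step by a different regrouping of the bracket: it writes
\begin{equation*}
(1-aq^{2n+1}) - q^n \;=\; (1-aq^{2n+1})(1-q^n) \;-\; aq^{3n+1},
\end{equation*}
keeping the factor $(1-aq^{2n+1})$ attached to $(1-q^n)$. Then the first piece cancels a factor from $(aq;q^2)_{n+1}$ \emph{as well as} from $(q;q)_n$, leaving $\sum_{n\geqq 1} a^n q^{\frac32 n^2-\frac12 n}/\bigl((aq;q^2)_n(q;q)_{n-1}\bigr)$, and the shift $n\mapsto n+1$ turns this into exactly $\sum_{n\geqq 0} a^{n+1}q^{\frac32 n^2+\frac52 n+1}/\bigl((aq;q^2)_{n+1}(q;q)_n\bigr)$, which is term-by-term the second piece. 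So the missing idea is precisely this regrouping; without it you must fall back on the $q$-difference-equation/uniqueness argument, which works but is heavier than the paper's two-line cancellation.
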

 \begin{proof}
   \begin{eqnarray*}
   &&  F_{2,2,2}(a) - F^*_{2,2,2}(a) \\
   &=& \sum_{n=0}^\infty \frac{ a^n q^{\frac 32 n^2 - \frac 12 n}} {(aq;q^2)_{n+1} (q;q)_n}
      \Big( (1-aq^{2n+1}) - q^n \Big) \\
   &=& \sum_{n=0}^\infty \frac{ a^n q^{\frac 32 n^2 - \frac 12 n}} {(aq;q^2)_{n+1} (q;q)_n}
      \Big( (1-aq^{2n+1})(1-q^n) - aq^{3n+1} \Big) \\
    &=& \sum_{n=1}^\infty \frac{ a^n q^{\frac 32 n^2 - \frac 12 n}} {(aq;q^2)_{n} (q;q)_{n-1}}
      - \sum_{n=0}^\infty \frac{ a^{n+1} q^{\frac 32 n^2 + \frac 52 n + 1}} {(aq;q^2)_{n+1} (q;q)_n}
     \\   
    &=& 0  
    \end{eqnarray*}\openbox
 \end{proof}
       
 \begin{lem}\label{Lem2}
The $F_{2,2,i}(a,q)$ satisfy the following $q$-difference equations:
\begin{gather}
 F_{2,2,1}(a) = \frac{1}{1-aq} F_{2,2,2}(aq^2) \label{F10_1qde} \\
 F_{2,2,2}(a) = F_{2,2,1}(a) + \frac{aq^2}{1-aq} F_{2,2,1} (aq^2) \label{F10_2qde} 
\end{gather}
which, together with $F_{2,2,i}(0) = 1$ for $i=1,2$, uniquely determine $F_{2,2,i}(a)$ as
a double power series in $a$ and $q$.
\end{lem}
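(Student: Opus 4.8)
The plan is to verify the two $q$-difference equations by direct manipulation of the defining $q$-series, after which the stated initial conditions and the uniqueness assertion follow from routine observations. For \eqref{F10_1qde} I would start from the right-hand side: substituting $aq^2$ for $a$ in the definition of $F_{2,2,2}$ gives $F_{2,2,2}(aq^2)=\sum_{n\geq 0}\frac{a^n q^{3n^2/2+3n/2}}{(aq^3;q^2)_n(q;q)_n}$, using $(aq^2)^n q^{3n^2/2-n/2}=a^nq^{3n^2/2+3n/2}$ and $(aq^2\cdot q;q^2)_n=(aq^3;q^2)_n$. The only further point is the Pochhammer shift $(1-aq)(aq^3;q^2)_n=(aq;q^2)_{n+1}$, which is immediate; dividing by $1-aq$ turns the summand into that of $F_{2,2,1}(a)$ term by term, giving \eqref{F10_1qde}.

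For \eqref{F10_2qde} I would exploit the equality $F_{2,2,2}(a)=F^*_{2,2,2}(a)$ proved in the preceding lemma, so that both $F_{2,2,2}(a)$ and $F_{2,2,1}(a)$ are written as sums with denominator $(aq;q^2)_{n+1}(q;q)_n$, differing only in the $q$-exponent, namely $q^{3n^2/2+n/2}$ versus $q^{3n^2/2+3n/2}$. Subtracting, $F_{2,2,2}(a)-F_{2,2,1}(a)=\sum_{n\geq 0}\frac{a^n q^{3n^2/2+n/2}(1-q^n)}{(aq;q^2)_{n+1}(q;q)_n}$; the $n=0$ term vanishes and $(1-q^n)/(q;q)_n=1/(q;q)_{n-1}$, so after reindexing $n\mapsto n+1$ and computing $\tfrac32(n+1)^2+\tfrac12(n+1)=\tfrac32 n^2+\tfrac72 n+2$ one obtains $aq^2\sum_{n\geq 0}\frac{a^n q^{3n^2/2+7n/2}}{(aq;q^2)_{n+2}(q;q)_n}$. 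Expanding $F_{2,2,1}(aq^2)$ directly and using $(1-aq)(aq^3;q^2)_{n+1}=(aq;q^2)_{n+2}$ identifies this last sum with $\frac{1}{1-aq}F_{2,2,1}(aq^2)$, which is \eqref{F10_2qde}.

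The initial conditions $F_{2,2,i}(0)=1$ hold because every term with $n\geq 1$ carries a factor $a^n$, so at $a=0$ only the $n=0$ term survives, and it equals $1$ in both cases. For uniqueness, write $F_{2,2,i}(a,q)=\sum_{m\geq 0}c_{i,m}(q)\,a^m$; expanding $\frac{1}{1-aq}$ and the dilated series $F(aq^2)$ as power series in $a$ and comparing coefficients of $a^m$ in \eqref{F10_1qde} and \eqref{F10_2qde} expresses $c_{1,m}$ and $c_{2,m}$ in terms of $c_{1,k},c_{2,k}$ with $k<m$ (the diagonal contributions come weighted by a factor $q^{2m}\neq 1$ that can be cleared), so induction on $m$ from $c_{1,0}=c_{2,0}=1$ determines all coefficients.

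I do not expect a genuine obstacle here: the entire argument is bookkeeping of $q$-exponents and Pochhammer index shifts, and the one potentially awkward step for \eqref{F10_2qde} — avoiding a split of the form $(1-aq^{2n+1})(1-q^n)$ — is already handled by having $F_{2,2,2}=F^*_{2,2,2}$ in hand, so the subtraction is clean. The only place requiring care is getting the exponent $\tfrac32(n+1)^2+\tfrac12(n+1)$ and the three Pochhammer relabelings exactly right so that the series match summand by summand.
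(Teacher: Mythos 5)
Your proof is correct and follows essentially the same route as the paper: \eqref{F10_1qde} is verified by the direct Pochhammer shift $(1-aq)(aq^3;q^2)_n=(aq;q^2)_{n+1}$, and \eqref{F10_2qde} by replacing $F_{2,2,2}$ with $F^*_{2,2,2}$ (from the preceding lemma) so that the subtraction produces the factor $1-q^n$ and a clean reindexing. Your explicit coefficient-comparison argument for uniqueness, with the $1-q^{2m}\neq 0$ observation, fills in a detail the paper merely asserts.
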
   
 \begin{proof} By inspection, we see
   \[ \frac{1}{1-aq} F^*_{2,2,2} (aq^2) = \frac{1}{1-aq} F_{2,2,2}(aq^2) = 
   F_{2,2,1}(a), \]
   and so (\ref{F10_1qde}) is established.
   Next,
   \begin{eqnarray*}
    F^*_{2,2,2}(a) - F_{2,2,1}(a) &=& \sum_{n=1}^\infty 
    \frac{a^n q^{\frac 32 n^2 + \frac 12 n} (1-q^n)}
    {(aq;q^2)_{n+1} (q;q)_n} \\
    &=&\sum_{n=0}^\infty \frac{a^{n+1} q^{\frac 32 n^2 + \frac 72 n + 2} }
    {(aq;q^2)_{n+2} (q;q)_n} \\
    &=&\frac{aq^2}{1-aq} F_{2,2,1}(aq^2),
   \end{eqnarray*}
  which verifies~(\ref{F10_2qde}).\openbox
 \end{proof}
 
 Thus, by combining Lemma~\ref{Lem2} with Theorem~\ref{qdiffs}, 
 we have established the following theorem:
 \begin{thm}\label{a-mod10}
 For $i=1,2$, 
 \[ F_{2,2,i}(a) = Q_{2,2,i}(a). \]
 \end{thm}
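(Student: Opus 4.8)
The plan is to prove Theorem~\ref{a-mod10} by a uniqueness argument: I will show that the pair $(F_{2,2,1},F_{2,2,2})$ and the pair $(Q_{2,2,1},Q_{2,2,2})$ satisfy one and the same system of $q$-difference equations subject to one and the same initial conditions, and then appeal to the fact—already recorded in Lemma~\ref{Lem2} and in the remark following Theorem~\ref{qdiffs}—that such a system has a unique solution as a power series in $a$ and $q$.

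First I would specialize Theorem~\ref{qdiffs} to $(d,k)=(2,2)$. Since $(aq;q)_{d-1}=(aq;q)_1=1-aq$, equation \eqref{qdiff1} reads $Q_{2,2,1}(a)=\frac{1}{1-aq}Q_{2,2,2}(aq^2)$, which is literally \eqref{F10_1qde}. The only admissible value of $i$ in \eqref{qdiff2} is $i=2$, the range there being $2\leqq i\leqq k=2$; with $d=2$ one has $a^{i-1}q^{(i-1)d}=aq^2$ and $k-i+1=1$, so \eqref{qdiff2} reads $Q_{2,2,2}(a)=Q_{2,2,1}(a)+\frac{aq^2}{1-aq}Q_{2,2,1}(aq^2)$, which is literally \eqref{F10_2qde}. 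Thus the $Q_{2,2,i}(a)$ satisfy exactly the system of Lemma~\ref{Lem2}. The initial conditions also match: setting $a=0$ in \eqref{Qdef} annihilates every term with $n\geqq 1$ (each carries a factor $a^{kn}$ with $kn\geqq 2$) and leaves the $n=0$ term, which equals $1$, so $Q_{2,2,i}(0,q)=1$; and $F_{2,2,i}(0,q)=1$ by inspection of the defining sums.

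It then remains to invoke uniqueness. As observed in Lemma~\ref{Lem2} (and again in the remark after Theorem~\ref{qdiffs}), the system \eqref{F10_1qde}--\eqref{F10_2qde} together with the value at $a=0$ has a unique solution in $\mathbb{Z}[[a,q]]$: substituting \eqref{F10_2qde} into \eqref{F10_1qde} expresses $F_{2,2,1}(a)$ through $F_{2,2,1}(aq^2)$ and $F_{2,2,1}(aq^4)$, so comparison of the coefficients of $a^m$ recovers the $a^m$-coefficient from those of strictly lower order, the self-referential contributions entering with a factor $q^{2m}$; the remaining function is then read off from \eqref{F10_2qde}. Since both $(F_{2,2,1},F_{2,2,2})$ and $(Q_{2,2,1},Q_{2,2,2})$ solve this system, they coincide, which is the assertion $F_{2,2,i}(a)=Q_{2,2,i}(a)$ for $i=1,2$.

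I do not anticipate a genuine obstacle here: the entire content is the bookkeeping verification that the $(d,k)=(2,2)$ specialization of \eqref{qdiff1}--\eqref{qdiff2} produces precisely the coefficients $\frac{1}{1-aq}$ and $\frac{aq^2}{1-aq}$ appearing in \eqref{F10_1qde}--\eqref{F10_2qde}, combined with the routine observation that a shift-recurrence with a $q$-contracting argument has a unique power-series solution once the constant term is fixed. The one point requiring a little care is that no separate equation should be expected from \eqref{qdiff2} at $i=1$—that index lies outside the stated range, and the $i=1$ behaviour is instead governed by \eqref{qdiff1}.
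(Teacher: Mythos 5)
Your proposal is correct and is essentially the paper's own argument: the paper likewise establishes the theorem by combining Lemma~\ref{Lem2} with the $(d,k)=(2,2)$ specialization of Theorem~\ref{qdiffs} and the uniqueness of the power-series solution given the initial condition at $a=0$. Your added verification that the specialized coefficients $\frac{1}{1-aq}$ and $\frac{aq^2}{1-aq}$ match, and your sketch of why uniqueness holds, are consistent with what the paper leaves implicit.
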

 
Setting $a=1$ and employing Proposition~\ref{infprod}, we obtain 
two identities of Rogers~\cite{Rog1894}, 
which appear as (44) and (46) on Slater's list~\cite{Slater2}:
 \begin{cor}
   \begin{gather}
     \sum_{n=0}^\infty \frac{ q^{\frac 32 n^2 + \frac 32 n}} {(q;q^2)_{n+1} (q;q)_n }= 
     \frac{ (q^2,q^8,q^{10};q^{10})_\infty}{(q;q)_\infty} \label{RogMod10-2}\\
     \sum_{n=0}^\infty \frac{ q^{\frac 32 n^2 - \frac 12 n}} {(q;q^2)_n (q;q)_n} = 
     \sum_{n=0}^\infty \frac{ q^{\frac 32 n^2 + \frac 12 n}} {(q;q^2)_{n+1} (q;q)_n} =
     \frac{ (q^4,q^6,q^{10};q^{10})_\infty}{(q;q)_\infty}. \label{RogMod10-1}
    \end{gather}
  \end{cor}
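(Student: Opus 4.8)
The plan is to obtain the corollary as the $a=1$ specialization of Theorem~\ref{a-mod10}, reading off the product sides from Proposition~\ref{infprod}. First I would note that $F_{2,2,1}(a,q)$ and $Q_{2,2,1}(a,q)$, as well as $F_{2,2,2}(a,q)$ and $Q_{2,2,2}(a,q)$, are genuine power series in $a$ whose coefficients are $q$-series convergent for $|q|<1$, so the substitution $a=1$ is legitimate; in particular the factor $1/(aq;q)_\infty$ occurring in the $Q$'s simply becomes $1/(q;q)_\infty$.

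Next I would carry out the substitution termwise. Putting $a=1$ in the definition of $F_{2,2,1}$ turns the left side of Theorem~\ref{a-mod10} in the case $i=1$ into $\sum_{n\geqq 0} q^{\frac32 n^2+\frac32 n}/((q;q^2)_{n+1}(q;q)_n)$, while Proposition~\ref{infprod} with $(d,k,i)=(2,2,1)$ evaluates $Q_{2,2,1}(1)$: the modulus is $(2k+1)d=10$ and the three arguments are $q^{id}=q^2$, $q^{(2k-i+1)d}=q^8$, $q^{(2k+1)d}=q^{10}$, giving precisely the right side of \eqref{RogMod10-2}. Likewise, $a=1$ in the definition of $F_{2,2,2}$ and Theorem~\ref{a-mod10} with $i=2$ give $\sum_{n\geqq 0} q^{\frac32 n^2-\frac12 n}/((q;q^2)_n(q;q)_n)=Q_{2,2,2}(1)$, and Proposition~\ref{infprod} with $(d,k,i)=(2,2,2)$ now produces the arguments $q^4,q^6,q^{10}$, i.e. the right side of \eqref{RogMod10-1}. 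Finally, the middle sum in \eqref{RogMod10-1} is supplied by the lemma $F_{2,2,2}(a)=F^*_{2,2,2}(a)$ already established in this subsection: at $a=1$ it asserts $\sum_{n\geqq 0} q^{\frac32 n^2-\frac12 n}/((q;q^2)_n(q;q)_n)=\sum_{n\geqq 0} q^{\frac32 n^2+\frac12 n}/((q;q^2)_{n+1}(q;q)_n)$.

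There is essentially no obstacle here: all the substance is already contained in Theorem~\ref{a-mod10} (which itself rests on Lemma~\ref{Lem2} and Theorem~\ref{qdiffs}) and in Proposition~\ref{infprod}. The only point demanding a little care is the arithmetic bookkeeping in the formula of Proposition~\ref{infprod} at $k=2$ — namely that $2k-i+1$ specializes to $3$ when $i=1$ and to $2$ when $i=2$, so that the moduli come out $10$ in both cases and the exponents match Rogers' identities, which appear as (44) and (46) in Slater's list.
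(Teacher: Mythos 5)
Your proposal is correct and is precisely the paper's own argument: the corollary is obtained by setting $a=1$ in Theorem~\ref{a-mod10} and reading off the products from Proposition~\ref{infprod} with $(d,k)=(2,2)$, the middle member of \eqref{RogMod10-1} being supplied by the lemma $F_{2,2,2}(a)=F^*_{2,2,2}(a)$. (One trivial slip in your closing remark: with $k=2$ the quantity $2k-i+1$ equals $4$ for $i=1$ and $3$ for $i=2$, not $3$ and $2$; the exponents $q^{8}$ and $q^{6}$ you actually computed earlier are nevertheless correct.)
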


\subsubsection{The case $(d,k)=(2,3)$}
\hfil\break
\begin{definition}
\begin{gather*} 
  F_{2,3,1}(a) := F_{2,3,1}(a,q) := \sum_{n=0}^\infty \frac{a^n q^{n^2 + 2n}}
  {(aq;q^2)_{n+1} (q;q)_n}\\
  F_{2,3,2}(a) := F_{2,3,2}(a,q) := \sum_{n=0}^\infty \frac{a^n q^{n^2 + n}}
  {(aq;q^2)_{n+1} (q;q)_n}\\
  F_{2,3,3}(a) := F_{2,3,3}(a,q) := \sum_{n=0}^\infty \frac{a^n q^{n^2}}
  {(aq;q^2)_{n} (q;q)_n}\\
\end{gather*}
\end{definition}

\begin{lem}\label{Lem1}
The $F_{2,3,i}(a,q)$ satisfy the following $q$-difference equations:
\begin{gather}
 F_{2,3,1}(a) = \frac{1}{1-aq} F_{2,3,3}(aq^2) \label{F1} \\
 F_{2,3,2}(a) = F_{2,3,1}(a) + \frac{aq^2}{1-aq} F_{2,3,2} (aq^2) \label{F2} \\
 F_{2,3,3}(a) = F_{2,3,2}(a) + \frac{a^2 q^4}{1-aq} F_{2,3,1}(aq^2) \label{F3},
\end{gather}
which, together with $F_{2,3,i}(0) = 1$ for $i=1,2,3$, uniquely determine $F_{2,3,i}(a)$ as
a double power series in $a$ and $q$.
\end{lem}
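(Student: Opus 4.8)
The plan is to verify the three $q$-difference equations \eqref{F1}, \eqref{F2}, and \eqref{F3} directly from the defining sums, using the same elementary telescoping manipulations employed for the $(d,k)=(2,2)$ functions in Lemma~\ref{Lem2} and its companion lemmas above; the recurrences together with the initial conditions $F_{2,3,i}(0)=1$ then determine the three series by a routine coefficient induction. Three elementary identities do all the work: $(1-aq)(aq^3;q^2)_n=(aq;q^2)_{n+1}$, which relates the denominators after the substitution $a\mapsto aq^2$; $(1-aq^{2n+1})/(aq;q^2)_{n+1}=1/(aq;q^2)_n$; and $(1-q^n)/(q;q)_n=1/(q;q)_{n-1}$, which annihilates the $n=0$ term so that the summation index may be shifted by one.

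Equations \eqref{F1} and \eqref{F2} are the straightforward ones. For \eqref{F1}, replacing $a$ by $aq^2$ in $F_{2,3,3}$ turns $q^{n^2}$ into $q^{n^2+2n}$ and $(aq^3;q^2)_n$ into a factor that, after multiplication by $1/(1-aq)$, becomes $(aq;q^2)_{n+1}$; thus $\frac{1}{1-aq}F_{2,3,3}(aq^2)$ agrees with $F_{2,3,1}(a)$ term by term. For \eqref{F2}, the summands of $F_{2,3,2}(a)$ and $F_{2,3,1}(a)$ have the same denominator, so $F_{2,3,2}(a)-F_{2,3,1}(a)=\sum_{n\geq0}a^nq^{n^2+n}(1-q^n)/\big((aq;q^2)_{n+1}(q;q)_n\big)$; applying $(1-q^n)/(q;q)_n=1/(q;q)_{n-1}$ and shifting $n\mapsto n+1$ gives $\sum_{n\geq0}a^{n+1}q^{n^2+3n+2}/\big((aq;q^2)_{n+2}(q;q)_n\big)$, which, on writing $(aq;q^2)_{n+2}=(1-aq)(aq^3;q^2)_{n+1}$ and reading off the argument $aq^2$, is precisely $\frac{aq^2}{1-aq}F_{2,3,2}(aq^2)$.

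The main obstacle, purely in bookkeeping, is \eqref{F3}, where the telescoping step must be carried out twice. First rewrite $F_{2,3,3}(a)=\sum_{n\geq0}a^nq^{n^2}(1-aq^{2n+1})/\big((aq;q^2)_{n+1}(q;q)_n\big)$ so that it shares a denominator with $F_{2,3,2}(a)$; the numerator of the difference is then $a^nq^{n^2}\big((1-aq^{2n+1})-q^n\big)$, and one uses the algebraic split $(1-aq^{2n+1})-q^n=(1-aq^{2n+1})(1-q^n)-aq^{3n+1}$ (the same device as in the $(d,k)=(2,2)$ computations). In the $(1-aq^{2n+1})(1-q^n)$ part, cancel $(1-aq^{2n+1})$ against the denominator, apply $(1-q^n)/(q;q)_n=1/(q;q)_{n-1}$, and shift $n\mapsto n+1$; the outcome differs from the $aq^{3n+1}$ part only in the power of $q$, so their difference again carries a factor $(1-q^n)$. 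A second use of the $(1-q^n)/(q;q)_n$ cancellation and shift $n\mapsto n+1$ yields $\sum_{n\geq0}a^{n+2}q^{n^2+4n+4}/\big((aq;q^2)_{n+2}(q;q)_n\big)$, which one identifies as $\frac{a^2q^4}{1-aq}F_{2,3,1}(aq^2)$, establishing \eqref{F3}.

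For uniqueness, write $F_{2,3,i}(a)=\sum_{m\geq0}f_{i,m}a^m$ with each $f_{i,m}$ a formal power series in $q$. At $a=0$ only the $n=0$ term of each defining sum survives, so $f_{i,0}=1$. Comparing coefficients of $a^m$ for $m\geq1$ in \eqref{F1}, \eqref{F2}, \eqref{F3} expresses $f_{1,m}$ as $q^{2m}f_{3,m}$ plus a combination of the $f_{3,\ell}$ with $\ell<m$, then $f_{2,m}$ as $f_{1,m}$ plus a combination of the $f_{2,\ell}$ with $\ell<m$, and then $f_{3,m}$ as $f_{2,m}$ plus a combination of the $f_{1,\ell}$ with $\ell<m$; eliminating $f_{2,m}$ and $f_{3,m}$ yields an expression for $(1-q^{2m})f_{1,m}$ in terms of data of order less than $m$ only, and since $1-q^{2m}$ is invertible as a formal power series in $q$, induction on $m$ pins down all three $F_{2,3,i}(a)$ as double power series in $a$ and $q$. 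One may also observe that \eqref{F1}--\eqref{F3} are precisely the $(d,k)=(2,3)$ instances of \eqref{qdiff1}--\eqref{qdiff2}, so this lemma together with Theorem~\ref{qdiffs} will force $F_{2,3,i}(a)=Q_{2,3,i}(a)$.
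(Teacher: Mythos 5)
Your proof is correct and follows essentially the same route as the paper: \eqref{F1} by inspection, \eqref{F2} by one telescoping shift, and \eqref{F3} by the split $(1-aq^{2n+1})-q^n=(1-aq^{2n+1})(1-q^n)-aq^{3n+1}$ followed by two telescoping steps, which is exactly the paper's ``catalyst function'' computation run in the forward direction (your intermediate cross term $\sum_n a^{n+1}q^{n^2+3n+1}/\bigl((aq;q^2)_{n+1}(q;q)_n\bigr)$ is precisely the paper's $\phi(a)$). The coefficient-induction for uniqueness, which the paper only asserts, is correctly supplied, including the key point that the elimination leaves an invertible factor $1-q^{2m}$.
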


\begin{proof}
 \[ F_{2,3,1}(a) = \frac{1}{1-aq} F_{2,3,3}(aq^2)\] is clear, so (\ref{F1}) is immediate.
 Next,
 \begin{eqnarray*}
   F_{2,3,2}(a) - F_{2,3,1}(a) &=& \sum_{n=0}^\infty \frac{a^n q^{n^2+n}}{(aq;q^2)_{n+1} (q;q)_n} (1-q^n)\\
                  &=& \sum_{n=1}^\infty \frac{a^n q^{n^2+n}}{(aq;q^2)_{n+1} (q;q)_{n-1}} \\
                  &=& \sum_{n=0}^\infty \frac{a^{n+1} q^{(n+1)^2+(n+1)}}{(aq;q^2)_{n+2} (q;q)_n} \\
                  &=& \frac{aq^2}{1-aq}\sum_{n=0}^\infty 
                   \frac{a^n q^{n^2+3n}}{(aq^3;q^2)_{n+1} (q;q)_n} \\
                  &=& \frac{aq^2}{1-aq} F_{2,3,2}(aq^2),
 \end{eqnarray*}
so (\ref{F2}) is established.
Establishing (\ref{F3}) is a bit trickier, and requires us to define a ``catalyst'' function
 \[ \phi(a) := \sum_{n=0}^\infty \frac{ a^{n+1} q^{n^2 + 3n + 1}}{(aq;q^2)_{n+1} (q;q)_n}.\]
 \begin{eqnarray*}
  & &\frac{a^2 q^4}{1-aq} F_{2,3,1}(aq^2) + \phi(a)\\
  &=&\frac{a^2 q^4}{1-aq} \sum_{n=0}^\infty \frac{a^n q^{n^2+4n}}{ (aq^3;q^2)_{n+1} (q;q)_n}
                     + \sum_{n=0}^\infty \frac{ a^{n+1} q^{n^2 + 3n + 1}}{(aq;q^2)_{n+1} (q;q)_n}\\
  &=& \sum_{n=0}^\infty \frac{a^{n+2} q^{(n+2)^2}}{ (aq;q^2)_{n+2} (q;q)_n}
                     + \sum_{n=0}^\infty \frac{ a^{n+1} q^{n^2 + 3n + 1}}{(aq;q^2)_{n+1} (q;q)_n}\\
  &=&\sum_{n=1}^\infty \frac{a^{n+1} q^{(n+1)^2}}{ (aq;q^2)_{n+1} (q;q)_{n-1}}
                     + \sum_{n=0}^\infty \frac{ a^{n+1} q^{n^2 + 3n + 1}}{(aq;q^2)_{n+1} (q;q)_n}\\
  &=&\sum_{n=0}^\infty \frac{a^{n+1} q^{n^2 + 2n + 1}}{ (aq;q^2)_{n+1} (q;q)_n}(1-q^n)
                     + \sum_{n=0}^\infty \frac{ a^{n+1} q^{n^2 + 3n + 1}}{(aq;q^2)_{n+1} (q;q)_n}\\
  &=&\sum_{n=0}^\infty \frac{a^{n+1} q^{n^2 + 2n + 1}}{ (aq;q^2)_{n+1} (q;q)_n}
    -\sum_{n=0}^\infty \frac{a^{n+1} q^{n^2 + 3n + 1}}{ (aq;q^2)_{n+1} (q;q)_n}
                     + \sum_{n=0}^\infty \frac{ a^{n+1} q^{n^2 + 3n + 1}}{(aq;q^2)_{n+1} (q;q)_n}\\
  &=&\sum_{n=0}^\infty \frac{a^{n+1} q^{n^2 + 2n + 1}}{ (aq;q^2)_{n+1} (q;q)_n}\\
  &=&\sum_{n=0}^\infty \frac{a^n q^{n^2}}{ (aq;q^2)_n (q;q)_{n-1}}\\
  &=&\sum_{n=0}^\infty \frac{a^n q^{n^2}(1-aq^{2n+1})(1-q^n)}{ (aq;q^2)_{n+1} (q;q)_n}\\
  &=&\sum_{n=0}^\infty \frac{a^n q^{n^2}(1-aq^{2n+1}-q^n+aq^{3n+1})}{ (aq;q^2)_{n+1} (q;q)_n}\\
  &=& \sum_{n=0}^\infty \frac{a^n q^{n^2}}{ (aq;q^2)_n (q;q)_n} 
     \left( 1-\frac{q^n}{1-aq^{2n+1}} \right)
   +  \sum_{n=0}^\infty \frac{a^{n+1} q^{n^2+3n+1}}{ (aq;q^2)_{n+1} (q;q)_n}\\
  &=& F_{2,3,3}(a) - F_{2,3,2}(a) + \phi(a),
 \end{eqnarray*}
 and thus (\ref{F3}) is established.\openbox
\end{proof}

Thus, combining Lemma~\ref{Lem1} with Theorem~\ref{qdiffs},
we have established the following theorem:
 \begin{thm}\label{a-mod14}
 For $i=1,2,3$, 
 \[ F_{2,3,i}(a) = Q_{2,3,i}(a). \]
 \end{thm}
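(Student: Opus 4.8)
The plan is to deduce Theorem~\ref{a-mod14} from a uniqueness principle: two families of formal power series that satisfy the same system of $q$-difference equations and the same initial conditions must coincide. By the Remark following the proof of \eqref{qdiff2}, the $Q_{d,k,i}(a,q)$ are the unique double power series in $a$ and $q$ obeying \eqref{qdiff1}, \eqref{qdiff2} together with $Q_{d,k,i}(0,q)=1$; and Lemma~\ref{Lem1} records the analogous assertion for the $F_{2,3,i}(a,q)$ relative to \eqref{F1}, \eqref{F2}, \eqref{F3} with $F_{2,3,i}(0,q)=1$. So it is enough to match these two systems in the case $(d,k)=(2,3)$.

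First I would specialize Theorem~\ref{qdiffs} to $d=2$, $k=3$. Here $(aq;q)_{d-1}=(aq;q)_1=1-aq$, so \eqref{qdiff1} becomes $Q_{2,3,1}(a)=\frac{1}{1-aq}Q_{2,3,3}(aq^2)$, which is \eqref{F1} with $Q$ in place of $F$. For $i=2$ the term $Q_{d,k,k-i+1}(aq^d)$ is $Q_{2,3,2}(aq^2)$, and \eqref{qdiff2} reads $Q_{2,3,2}(a)=Q_{2,3,1}(a)+\frac{aq^2}{1-aq}Q_{2,3,2}(aq^2)$, matching \eqref{F2}; for $i=3$ the term $Q_{d,k,k-i+1}(aq^d)$ is $Q_{2,3,1}(aq^2)$, and \eqref{qdiff2} reads $Q_{2,3,3}(a)=Q_{2,3,2}(a)+\frac{a^2q^4}{1-aq}Q_{2,3,1}(aq^2)$, matching \eqref{F3}. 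I would then note that $F_{2,3,i}(0,q)=1$ is immediate from the defining sums (only the $n=0$ term survives at $a=0$), so the initial data agree with $Q_{2,3,i}(0,q)=1$. Uniqueness then yields $F_{2,3,i}(a)=Q_{2,3,i}(a)$ for $i=1,2,3$.

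The genuine content of the argument is not in this final step but upstream, in Lemma~\ref{Lem1} (especially the derivation of \eqref{F3}, which hinges on introducing the catalyst function $\phi(a)$) and in Theorem~\ref{qdiffs}; granting those, the only care needed here is the index bookkeeping for $k-i+1$ and a check that the common system is ``triangular.'' For the latter, writing $F_{2,3,i}(a)=\sum_N c_{i,N}(q)a^N$, equations \eqref{F2} and \eqref{F3} express $c_{2,N}$ and $c_{3,N}$ in terms of $c_{1,N}$ and coefficients with index $<N$, while \eqref{F1} gives $c_{1,N}=q^{2N}c_{3,N}+(\text{index}<N)$; eliminating produces $(1-q^{2N})c_{3,N}=(\text{index}<N)$, and since $1-q^{2N}$ is invertible in $\mathbb{Z}[[q]]$ for $N\geq1$ this determines every coefficient recursively, with the $N=0$ layer pinned down by the normalization. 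This is exactly the uniqueness that the two cited results already encapsulate, so in the write-up the theorem is a one-line corollary of Lemma~\ref{Lem1} and Theorem~\ref{qdiffs}.
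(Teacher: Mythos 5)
Your proposal is correct and is essentially identical to the paper's argument: the paper likewise obtains the theorem as an immediate consequence of combining Lemma~\ref{Lem1} with Theorem~\ref{qdiffs}, matching the two systems of $q$-difference equations at $(d,k)=(2,3)$ and invoking uniqueness of the power-series solution with $F_{2,3,i}(0)=Q_{2,3,i}(0)=1$. Your explicit verification of the index bookkeeping for $k-i+1$ and the triangularity underlying the uniqueness claim is a welcome elaboration of what the paper leaves implicit.
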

 
By setting $a=1$ and employing Proposition~\ref{infprod}, 
we obtain three identities of Rogers (~\cite{Rog1894}
and~\cite{Rog1917}), 
which appear as (59), (60), and (61) respectively on Slater's list~\cite{Slater2}:
\begin{cor}
\begin{gather}
\sum_{n=0}^\infty \frac{q^{n^2 + 2n}}{(q;q^2)_{n+1} (q;q)_n} 
= \frac{(q^2,q^{12},q^{14};q^{14})}{(q;q)_\infty} \label{RogMod14-3}\\
\sum_{n=0}^\infty \frac{q^{n^2 + n}}{(q;q^2)_{n+1} (q;q)_n} 
= \frac{(q^4,q^{10},q^{14};q^{14})}{(q;q)_\infty} \label{RogMod14-2}\\
\sum_{n=0}^\infty \frac{q^{n^2}}{(q;q^2)_{n} (q;q)_n} 
= \frac{(q^6,q^{8},q^{14};q^{14})}{(q;q)_\infty} \label{RogMod14-1}
\end{gather}
\end{cor}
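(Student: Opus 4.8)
The plan is to derive all three identities simultaneously by specializing $a=1$ in Theorem~\ref{a-mod14}, which asserts $F_{2,3,i}(a)=Q_{2,3,i}(a)$ for $i=1,2,3$. By the remark following Theorem~\ref{qdiffs} each $F_{2,3,i}(a,q)$ and $Q_{2,3,i}(a,q)$ is a well-defined power series in $a$ and $q$, and for $a=1$ every sum in sight has summands that are $O(q^{cn^2})$ with $c>0$, hence converges absolutely for $|q|<1$; so the substitution $a=1$ is harmless and gives $F_{2,3,i}(1)=Q_{2,3,i}(1)$.

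First I would read off the left-hand sides. Putting $a=1$ in the definitions of $F_{2,3,1}$, $F_{2,3,2}$, $F_{2,3,3}$ turns $(aq;q^2)_{n+1}$ into $(q;q^2)_{n+1}$ and $(aq;q^2)_n$ into $(q;q^2)_n$, producing exactly
\[
\sum_{n\geqq 0}\frac{q^{n^2+2n}}{(q;q^2)_{n+1}(q;q)_n},\qquad
\sum_{n\geqq 0}\frac{q^{n^2+n}}{(q;q^2)_{n+1}(q;q)_n},\qquad
\sum_{n\geqq 0}\frac{q^{n^2}}{(q;q^2)_{n}(q;q)_n},
\]
which are the left-hand sides of \eqref{RogMod14-3}, \eqref{RogMod14-2}, and \eqref{RogMod14-1}.

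Next I would evaluate the right-hand sides with Proposition~\ref{infprod} at $d=2$, $k=3$, so that $2k+1=7$ and the modulus $2dk+d$ equals $14$. Proposition~\ref{infprod} then gives
\[
Q_{2,3,i}(1)=\frac{(q^{2i},\,q^{2(7-i)},\,q^{14};q^{14})_\infty}{(q;q)_\infty},
\]
which for $i=1,2,3$ reads $\dfrac{(q^2,q^{12},q^{14};q^{14})_\infty}{(q;q)_\infty}$, $\dfrac{(q^4,q^{10},q^{14};q^{14})_\infty}{(q;q)_\infty}$, $\dfrac{(q^6,q^8,q^{14};q^{14})_\infty}{(q;q)_\infty}$, i.e.\ the products displayed on the right of \eqref{RogMod14-3}--\eqref{RogMod14-1} (the trailing subscript $\infty$ on the $q^{14}$-Pochhammer being understood). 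Combining these two computations through the equality $F_{2,3,i}(1)=Q_{2,3,i}(1)$ establishes the corollary.

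Since everything here is an assembly of results already in hand, there is no genuine obstacle; the only point needing attention is bookkeeping — checking that the $a=1$ specialization of the $F_{2,3,i}$ reproduces the stated sums verbatim (note in particular that the subscript on the $(aq;q^2)$-factor is $n+1$ for $i=1,2$ but $n$ for $i=3$), and that the exponents coming out of Proposition~\ref{infprod} match the advertised moduli $2,12$; $4,10$; $6,8$. Both checks are immediate.
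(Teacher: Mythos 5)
Your proposal is correct and is exactly the paper's argument: the corollary is obtained by setting $a=1$ in Theorem~\ref{a-mod14} and evaluating $Q_{2,3,i}(1)$ via Proposition~\ref{infprod} with $d=2$, $k=3$, so that the modulus is $(2k+1)d=14$ and the exponents $id=2i$ and $(2k-i+1)d=14-2i$ give the three stated products. The bookkeeping you flag (the $n+1$ versus $n$ subscript on the $(q;q^2)$-factor and the exponent pairs $2,12$; $4,10$; $6,8$) checks out.
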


\subsubsection{The case $(d,k)=(2,4)$}
\hfil\break
\begin{definition}
\begin{gather*}
F_{2,4,1}(a):=F_{2,4,1}(a,q):= 
   \sum_{n\geqq 0} \sum_{r\geqq 0} \frac{a^{n+r} q^{n^2 + 2n + 2r^2 + 2r} }
   {(aq;q^2)_{n+1} (q;q)_{n-2r} (q^2;q^2)_r}\\
F_{2,4,2}(a):=F_{2,4,2}(a,q):= 
   \sum_{n\geqq 0} \sum_{r\geqq 0} \frac{a^{n+r} q^{n^2 + 2n + 2r^2 + 2r} (1+aq^{2r+2})}
   {(aq;q^2)_{n+1} (q;q)_{n-2r} (q^2;q^2)_r}\\
F_{2,4,3}(a):=F_{2,4,3}(a,q):= 
   \sum_{n\geqq 0} \sum_{r\geqq 0} \frac{a^{n+r} q^{n^2 + 2r^2 + 2r} }
   {(aq;q^2)_{n} (q;q)_{n-2r} (q^2;q^2)_r}\\   
F_{2,4,4}(a):=F_{2,4,4}(a,q):= 
   \sum_{n\geqq 0} \sum_{r\geqq 0} \frac{a^{n+r} q^{n^2 + 2r^2} }
   {(aq;q^2)_{n} (q;q)_{n-2r} (q^2;q^2)_r}
\end{gather*}
\end{definition}

\begin{lem}\label{LemMod18}
The $F_{2,4,i}(a,q)$ satisfy the following $q$-difference equations:
\begin{gather*}
 F_{2,4,1}(a) = \frac{1}{1-aq} F_{2,4,4}(aq^2)  \\
 F_{2,4,2}(a) = F_{2,4,1}(a) + \frac{aq^2}{1-aq} F_{2,4,3} (aq^2)  \\
 F_{2,4,3}(a) = F_{2,4,2}(a) + \frac{a^2 q^4}{1-aq} F_{2,4,2}(aq^2) \\
 F_{2,4,4}(a) = F_{2,4,3}(a) + \frac{a^3 q^{6}}{1-aq} F_{2,4,1}(aq^2),  
\end{gather*}
which, together with $F_{2,4,i}(0) = 1$ for $i=1,2,3,4$, uniquely determine $F_{2,4,i}(a)$ as
a double power series in $a$ and $q$.
\end{lem}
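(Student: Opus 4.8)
The plan is to verify all four $q$-difference equations by working directly with the defining double series, regarding each $F_{2,4,i}(a,q)$ as a formal power series in $a$ over $\mathbb{Z}[[q]]$ so that reindexing and rearrangement are automatically legitimate. The only ingredients needed are the elementary identities
\[ (1-aq)(aq^3;q^2)_n = (aq;q^2)_{n+1}, \qquad \frac{1}{(aq;q^2)_n} = \frac{1-aq^{2n+1}}{(aq;q^2)_{n+1}}, \]
together with the index-lowering factorizations $1-q^{n-2r}$ of $(q;q)_{n-2r}$ and $1-q^{2r}$ of $(q^2;q^2)_r$. Three of the four relations are then routine; the third is the real obstacle and, exactly as for \eqref{F3} in Lemma~\ref{Lem1}, it calls for a catalyst function.

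For the first equation one substitutes $a\mapsto aq^2$ in $F_{2,4,4}$; using the first identity above to cancel $\frac{1}{1-aq}$ against the leading factor of $(aq^3;q^2)_n$ and collecting powers of $a$ and $q$, the $(n,r)$ term becomes precisely the $(n,r)$ term of $F_{2,4,1}(a)$, with no reindexing. For the second equation, $F_{2,4,2}(a)-F_{2,4,1}(a)$ is the series obtained from $F_{2,4,2}$ by retaining only the $aq^{2r+2}$ part of the factor $1+aq^{2r+2}$, and after absorbing the prefactor $\frac{aq^2}{1-aq}$ into $F_{2,4,3}(aq^2)$ the two series agree term by term. For the fourth equation, the general term of $F_{2,4,4}(a)-F_{2,4,3}(a)$ carries the factor $1-q^{2r}$, which annihilates the $r=0$ contributions and lowers $(q^2;q^2)_r$ to $(q^2;q^2)_{r-1}$; after a shift $r\mapsto r+1$ on this series and a matching shift of the summation index on $\frac{a^3q^6}{1-aq}F_{2,4,1}(aq^2)$, the two sides coincide term by term. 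Each of these three is a short bookkeeping computation in the style of Lemmas~\ref{Lem2} and~\ref{Lem1}.

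The third equation, $F_{2,4,3}(a) = F_{2,4,2}(a) + \frac{a^2q^4}{1-aq}F_{2,4,2}(aq^2)$, is where the effort goes. A naive term-by-term comparison fails: after rewriting $F_{2,4,3}$ over the denominator $(aq;q^2)_{n+1}$, the numerator of $F_{2,4,3}(a)-F_{2,4,2}(a)$ is $1-q^{2n}-aq^{2n+1}-aq^{2n+2r+2}$, which does not factor through the available index-lowering factors in one step, while the series for $\frac{a^2q^4}{1-aq}F_{2,4,2}(aq^2)$ carries the mismatched $q$-exponent $2r^2+4r$ and denominator index $n-2r-2$. Following the $(2,3)$ template I would introduce a catalyst double sum of the shape
\[ \phi(a,q) := \sum_{n\geqq 0}\sum_{r\geqq 0} \frac{a^{n+r+1}\,q^{n^2+3n+2r^2+2r+1}}{(aq;q^2)_{n+1}(q;q)_{n-2r}(q^2;q^2)_r}, \]
the two-variable analogue of the $\phi(a)$ in the proof of \eqref{F3} (possibly requiring a minor modification to absorb the level mismatch), and establish
\[ \frac{a^2q^4}{1-aq}F_{2,4,2}(aq^2) + \phi(a,q) = F_{2,4,3}(a) - F_{2,4,2}(a) + \phi(a,q) \]
through a chain of moves: absorb the prefactor using the two displayed identities, reindex $n$, peel $1-q^{n-2r}$ off $(q;q)_{n-2r}$ and $1-aq^{2n+1}$ off $(aq;q^2)_{n+1}$, and recombine the telescoping remainders so that a copy of $\phi(a,q)$ emerges and cancels from both sides. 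Because the right-hand side here involves $F_{2,4,2}$ rather than $F_{2,4,1}$ (unlike the $(2,3)$ case), I expect to need the already-proved second and fourth equations of this lemma, applied at $a$ and at $aq^2$, to reconcile the two levels. Guessing $\phi$ correctly and carrying the second summation variable cleanly through every step is the principal difficulty.

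Finally, the initial conditions are immediate: setting $a=0$ kills every term of each double sum except the one with $(n,r)=(0,0)$, which equals $1$, so $F_{2,4,i}(0)=1$ for $i=1,2,3,4$. The uniqueness assertion then follows exactly as in Lemmas~\ref{Lem2} and~\ref{Lem1}: comparing coefficients of $a^N$ across the four recurrences expresses the degree-$N$ coefficients of $F_{2,4,1},\dots,F_{2,4,4}$ in terms of coefficients of strictly lower degree in $a$ via a linear system whose determinant is the unit $1-q^{2N}$ of $\mathbb{Z}[[q]]$ (for $N\geqq 1$), the degree-$0$ coefficients being pinned down by $F_{2,4,i}(0)=1$.
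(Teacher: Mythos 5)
The paper offers no proof of this lemma to compare against: immediately after stating it the author declares the details ``routine'' and omits this and all subsequent $q$-difference-equation lemmas, so your plan of imitating Lemmas~\ref{Lem2} and~\ref{Lem1} is exactly the intended route. Your handling of the first, second, and fourth equations is correct and complete in substance: the first holds term-by-term once $(1-aq)(aq^3;q^2)_n=(aq;q^2)_{n+1}$ is applied, with no reindexing; $F_{2,4,2}(a)-F_{2,4,1}(a)$ is precisely the $aq^{2r+2}$ part of $F_{2,4,2}(a)$ and matches $\frac{aq^2}{1-aq}F_{2,4,3}(aq^2)$ term by term; and the fourth follows by peeling $1-q^{2r}$ off $(q^2;q^2)_r$ and shifting $r\mapsto r+1$ together with $n\mapsto n+2$ on the other side. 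The initial conditions and the uniqueness argument (the coefficient-of-$a^N$ system closing up with unit determinant $1-q^{2N}$) are also fine.

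The gap is the third equation, the only one of the four that does not reduce to a single reindexing, and you have not proved it: you propose a catalyst $\phi(a,q)$ but explicitly label it a guess ``possibly requiring a minor modification,'' and you only ``expect'' that the already-proved relations will reconcile the levels. As it stands this is a plan, not a proof, and the difficulty is genuine. Concretely, after absorbing $\frac{a^2q^4}{1-aq}$ into $F_{2,4,2}(aq^2)$ and shifting $n\mapsto n-2$, the factor $1+aq^{2r+4}$ splits the series into two pieces; shifting $r\mapsto r-1$ in each turns the second piece into $\sum_{n,r}\frac{a^{n+r}q^{n^2+2r^2+2r}(1-q^{2r})}{(aq;q^2)_n(q;q)_{n-2r}(q^2;q^2)_r}=F_{2,4,3}(a)-\sum_{n,r}\frac{a^{n+r}q^{n^2+2r^2+4r}}{(aq;q^2)_n(q;q)_{n-2r}(q^2;q^2)_r}$, so that the third relation is reduced to yet another nontrivial double-sum identity expressing $F_{2,4,2}(a)$ in terms of that last sum and $\frac{a^2q^4}{1-aq}F_{2,4,1}(aq^2)$ --- still several exact reindexings from done. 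Until the correct catalyst (or an equivalent verified chain of manipulations) is actually produced and checked, the third $q$-difference equation, and hence the lemma, is not established by your argument; everything else in the proposal I would accept as written.
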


If the reader has been following along carefully, the details of the calculations should by now 
be routine, so I choose to omit the proof of this and subsequent lemmas establishing
the $q$-difference equations satisfied by the various $F_{d,k,i}(a)$.

Thus, combining Lemma~\ref{LemMod18} with Theorem~\ref{qdiffs},
we have established the following theorem:
 \begin{thm}
 For $i=1,2,3,4$, 
 \[ F_{2,4,i}(a) = Q_{2,4,i}(a) \]
 \end{thm}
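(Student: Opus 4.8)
The plan is to show that the four series $Q_{2,4,i}(a)$, $i=1,2,3,4$, satisfy exactly the same system of $q$-difference equations and the same initial conditions as the $F_{2,4,i}(a)$ of Lemma~\ref{LemMod18}, and then to invoke the uniqueness already recorded for that system. First I would specialize Theorem~\ref{qdiffs} to $(d,k)=(2,4)$. Here $(aq;q)_{d-1}=(aq;q)_1=1-aq$, so \eqref{qdiff1} reads $Q_{2,4,1}(a)=\frac{1}{1-aq}Q_{2,4,4}(aq^2)$, while \eqref{qdiff2} for $i=2,3,4$ (with $k-i+1$ equal to $3,2,1$ respectively, and $a^{i-1}q^{(i-1)d}$ equal to $aq^2,a^2q^4,a^3q^6$) gives
\begin{align*}
Q_{2,4,2}(a) &= Q_{2,4,1}(a) + \frac{aq^2}{1-aq}\,Q_{2,4,3}(aq^2),\\
Q_{2,4,3}(a) &= Q_{2,4,2}(a) + \frac{a^2q^4}{1-aq}\,Q_{2,4,2}(aq^2),\\
Q_{2,4,4}(a) &= Q_{2,4,3}(a) + \frac{a^3q^6}{1-aq}\,Q_{2,4,1}(aq^2).
\end{align*}
Together with the first relation, these are term-for-term the equations displayed in Lemma~\ref{LemMod18}, with $Q_{2,4,i}$ in place of $F_{2,4,i}$.

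Next I would match the initial conditions: setting $a=0$ in \eqref{Qdef} annihilates every summand with $n\geqq 1$ (each carries a factor $a^{kn}$) and leaves the $n=0$ term, which equals $1$, so $Q_{2,4,i}(0,q)=1$ for all $i$ — the same value as $F_{2,4,i}(0)=1$ from Lemma~\ref{LemMod18}; this is also precisely the remark following Theorem~\ref{qdiffs}. Since that remark (equivalently, the last sentence of Lemma~\ref{LemMod18}) asserts that these $q$-difference equations together with the value $1$ at $a=0$ determine the solution uniquely as a double power series in $a$ and $q$, and both $(Q_{2,4,i})$ and $(F_{2,4,i})$ solve that system, it follows that $F_{2,4,i}(a)=Q_{2,4,i}(a)$ for $i=1,2,3,4$.

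If one wishes to see the uniqueness concretely rather than cite it: write $F_i(a)=\sum_{N\geqq 0}c_i(N)a^N$ with $c_i(N)\in\mathbb{Q}[[q]]$ and extract the coefficient of $a^N$ from each equation. Each relation expresses $c_i(N)$ as a $\mathbb{Q}[[q]]$-multiple of one other $c_{i'}(N)$, plus a term depending only on the $c_j(m)$ with $m<N$; chaining the four relations cyclically yields $(1-q^{2N})\,c_4(N)=(\text{lower order})$, and $1-q^{2N}$ is a unit in $\mathbb{Q}[[q]]$ for $N\geqq 1$, so all $c_i(N)$ with $N\geqq 1$ are forced. For $N=0$ the cyclic relation collapses to $0=0$, which is exactly why the initial conditions $c_i(0)=1$ are needed to pin down the constant terms.

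The genuine labor behind the theorem lies in Lemma~\ref{LemMod18} itself, whose proof the paper omits as routine; reproducing it is the step I would expect to be the main obstacle. I would proceed exactly as in the $(d,k)=(2,3)$ case (Lemma~\ref{Lem1}): split $1=(1-q^{n-2r})+q^{n-2r}$ (respecting the denominator factor $(q;q)_{n-2r}$) to cancel one factor against $(q;q)_{n-2r}$, reindex the outer variable $n$, and repackage, carrying the inner $r$-sum along as an inert factor throughout. The delicate relation is the last one, $F_{2,4,4}(a)=F_{2,4,3}(a)+\frac{a^3q^6}{1-aq}F_{2,4,1}(aq^2)$, whose right-hand side is ``three steps down''; there, as in the derivation of \eqref{F3}, I would introduce an auxiliary catalyst series $\phi(a)$, show that $\frac{a^3q^6}{1-aq}F_{2,4,1}(aq^2)+\phi(a)$ telescopes down to $F_{2,4,4}(a)-F_{2,4,3}(a)+\phi(a)$, and cancel $\phi$. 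The only complication beyond Lemma~\ref{Lem1} is the extra bookkeeping forced by the presence of the double sum.
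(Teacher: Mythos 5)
Your proposal is correct and follows the paper's own route exactly: Theorem~\ref{qdiffs} specialized to $(d,k)=(2,4)$ gives the same system as Lemma~\ref{LemMod18}, the initial conditions $Q_{2,4,i}(0)=F_{2,4,i}(0)=1$ match, and uniqueness of the solution forces equality. The paper likewise leaves the proof of Lemma~\ref{LemMod18} to the reader as routine, and your sketch of how to supply it (telescoping with a catalyst function as in the $(2,3)$ case) is consistent with the method used there.
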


By setting $a=1$ and employing Proposition~\ref{infprod}, 
we obtain four new Rogers-Ramanujan type
identities related to the modulus $18$, listed as \eqref{mod18-1}--\eqref{mod18-4} in
the appendix.

\subsubsection{The case $(d,k) = (3,3)$}
\hfil\break
\begin{definition}\begin{gather*}
F_{3,3,1}(a):=
  \sum_{n\geqq 0}\sum_{r\geqq 0} \frac{(-1)^r a^n q^{n^2 + 3n +3r(r-1)/2} 
  (aq^3;q^3)_{n-r}}
   {(aq;q)_{2n+2} (q;q)_{n-3r} (q^3;q^3)_r}\\
F_{3,3,2}(a):=
  \sum_{n\geqq 0}\sum_{r\geqq 0} 
   \frac{(-1)^r a^{n-1} q^{n^2  +3r(r-3)/2} (a;q^3)_{n-r} (1 + aq^{3r}- q^{3r})}
   {(a;q)_{2n} (q;q)_{n-3r} (q^3;q^3)_r}\\
F_{3,3,3}(a):=
  \sum_{n\geqq 0}\sum_{r\geqq 0} \frac{(-1)^r a^n q^{n^2 + 3r(r-1)/2}(a;q^3)_{n-r}}
   {(a;q)_{2n-1} (q;q)_{n-3r} (q^3;q^3)_r}
\end{gather*}
\end{definition}

\begin{lem}\label{LemMod21}
The $F_{3,3,i}(a,q)$ satisfy the following $q$-difference equations:
\begin{gather*}
 F_{3,3,1}(a) = \frac{1}{(1-aq)(1-aq^2)} F_{3,3,3}(aq^3)  \\
 F_{3,3,2}(a) = F_{3,3,1}(a) + \frac{aq^3}{(1-aq)(1-aq^2)} F_{3,3,2} (aq^3)  \\
 F_{3,3,3}(a) = F_{3,3,2}(a) + \frac{a^2 q^6}{(1-aq)(1-aq^2)} F_{3,3,1}(aq^3) ,  
\end{gather*}
which, together with $F_{3,3,i}(0) = 1$ for $i=1,2,3$, uniquely determine $F_{3,3,i}(a)$ as
a double power series in $a$ and $q$.
\end{lem}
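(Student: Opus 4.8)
The plan is to follow the template established in the proof of Lemma~\ref{Lem1}, the $(d,k)=(2,3)$ case, since the three $q$-difference equations asserted here have exactly the same shape as \eqref{F1}--\eqref{F3}. In each case one substitutes the defining double sums for the $F_{3,3,i}(a,q)$, shifts the outer summation index $n$ (leaving the inner index $r$ in place), and repeatedly applies the elementary Pochhammer splittings $(q;q)_m=(1-q^m)(q;q)_{m-1}$, $(a;q)_m=(1-aq^{m-1})(a;q)_{m-1}$, $(a;q^3)_m=(1-a)(aq^3;q^3)_{m-1}$, together with the vanishing convention $1/(q;q)_m=0$ for $m<0$. The initial conditions $F_{3,3,i}(0)=1$ are immediate: setting $a=0$ annihilates every summand with $n\ge 1$, and among the surviving $n=0$ terms the factor $1/(q;q)_{-3r}$ kills all $r\ge 1$, leaving a single term equal to $1$ (in the $i=2$ case the leading $a^{n-1}$ is compensated by the factor $1+aq^{3r}-q^{3r}$, whose $r=0$ value is $a$, so the constant term is again $1$). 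This, together with \eqref{qdiff1}--\eqref{qdiff2} and the uniqueness noted in the Remark following Theorem~\ref{qdiffs}, pins down each $F_{3,3,i}(a)$ uniquely as a power series in $a$ and $q$, since the recurrences express $F_{3,3,i}(a)$ through the $F_{3,3,j}(aq^3)$ and iterating drives the argument to $0$.

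The first equation is obtained by the replacement $a\mapsto aq^3$ in $F_{3,3,3}$: the factor $a^n$ picks up $q^{3n}$, producing the exponent shift $n^2\mapsto n^2+3n$, while the denominator factor $(aq^3;q)_{2n-1}$ combines with the prefactor $1/((1-aq)(1-aq^2))$ through $(aq;q)_{m+2}=(1-aq)(1-aq^2)(aq^3;q)_m$; matching the result against the double sum defining $F_{3,3,1}(a)$ (after, if necessary, a single telescoping step in $n$ to absorb a residual linear factor) gives the identity. The second equation mirrors the derivation of \eqref{F2}: one forms $F_{3,3,2}(a)-F_{3,3,1}(a)$, observes that the extra numerator factor $1+aq^{3r}-q^{3r}$ built into $F_{3,3,2}$ is arranged precisely so that the difference of summands carries a clean factor $1-q^n$, cancels it against $1/(q;q)_{n-3r}$ by the shift $n\mapsto n+1$, and extracts $aq^3/((1-aq)(1-aq^2))$ to recognise $F_{3,3,2}(aq^3)$.

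The third equation, the analogue of \eqref{F3}, is where the work lies, and I expect it to be the main obstacle. As in the $(2,3)$ case, the plan is to introduce an auxiliary ``catalyst'' double sum $\phi(a)$ of the same general shape as the $F_{3,3,i}$ but with the exponent of $a$ (and of $q$) shifted, chosen so that the combination $\frac{a^2q^6}{(1-aq)(1-aq^2)}F_{3,3,1}(aq^3)+\phi(a)$, after the index shift $n\mapsto n+2$ dictated by the $a^2q^6$ and a regrouping, telescopes down to $F_{3,3,3}(a)-F_{3,3,2}(a)+\phi(a)$, whereupon $\phi(a)$ cancels from both sides. The two delicate points are determining the exact form of $\phi(a)$ --- the precise power of $q$ and the precise Pochhammer symbols, which must be tuned so that every intermediate series reindexes cleanly with no leftover boundary terms and so that the inner $r$-sum never obstructs the shift in $n$ --- and the bookkeeping, which is heavier than in Lemma~\ref{Lem1} because the three $F_{3,3,i}$ carry denominators of differing shapes ($(aq;q)_{2n+2}$, $(a;q)_{2n}$, $(a;q)_{2n-1}$) that must first be brought to a common form by several applications of $(a;q)_m=(1-aq^{m-1})(a;q)_{m-1}$ before the $1-q^n$ and $1-aq^{j}$ splittings can be applied. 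Once the three $q$-difference equations are in hand the lemma is complete; in particular, comparing with Theorem~\ref{qdiffs} would then, exactly as in Theorem~\ref{a-mod14}, identify $F_{3,3,i}(a)$ with $Q_{3,3,i}(a)$.
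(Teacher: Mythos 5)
The paper itself gives no proof of this lemma --- the author explicitly omits it, declaring the calculations ``routine'' after the worked $(d,k)=(2,3)$ case --- so your overall strategy (mimic the proof of Lemma~\ref{Lem1}: reindex in $n$, split Pochhammer symbols, use the vanishing convention for $1/(q;q)_m$ with $m<0$, and introduce a catalyst function for the third equation) is exactly the intended one, and your verification of the initial conditions $F_{3,3,i}(0)=1$ and of the uniqueness claim is fine. The problem is that for a lemma whose entire content is three computational identities, your write-up stops where the content begins. You never exhibit the catalyst $\phi(a)$ for the third equation --- you only observe that finding it is ``delicate'' and ``the main obstacle'' --- and the second equation is described in outline but not actually checked (in particular, the claim that the factor $1+aq^{3r}-q^{3r}$ makes the difference of summands carry ``a clean factor $1-q^n$'' is asserted, not demonstrated, and is not obviously true given that $F_{3,3,1}$ and $F_{3,3,2}$ have differently normalized denominators $(aq;q)_{2n+2}$ versus $(a;q)_{2n}$ and differently shifted exponents $3r(r-1)/2$ versus $3r(r-3)/2$). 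That is a genuine gap: the plan is right, but the proof is the computation, and the computation is not there.

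One concrete step in your sketch would also fail as described. For the first equation you propose, ``if necessary, a single telescoping step in $n$ to absorb a residual linear factor.'' In fact, replacing $a$ by $aq^3$ in $F_{3,3,3}$ and using $(1-aq)(1-aq^2)(aq^3;q)_{2n}=(aq;q)_{2n+2}$ makes the first equation a literal term-by-term identity with \emph{no} telescoping --- provided the denominator of $F_{3,3,3}$ is read as $(a;q)_{2n}$ rather than the printed $(a;q)_{2n-1}$ (comparing $F_{3,3,3}(1)$ with \eqref{mod21-3} shows the printed index must be a misprint). With the printed index the two sides of the first equation genuinely differ as power series (already in the coefficient of $aq^2$, and again at order $a^2q^4$ under any convention for $(a;q)_{-1}$), so no telescoping step can absorb the discrepancy. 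Any actual proof has to begin by pinning down the intended normalizations of the denominators $(aq;q)_{2n+2}$, $(a;q)_{2n}$, $(a;q)_{2n-1}$; your proposal glosses over exactly the point where the definitions need to be engaged.
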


Thus, combining Lemma~\ref{LemMod21} with Theorem~\ref{qdiffs},
we have established the following theorem:
 \begin{thm}
 For $i=1,2,3$, 
 \[ F_{3,3,i}(a) = Q_{3,3,i}(a). \]
 \end{thm}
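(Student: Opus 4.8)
The plan is to follow, verbatim, the template already used for the $(d,k)=(2,2),(2,3),(2,4)$ cases: show that the triple $\bigl(F_{3,3,1}(a),F_{3,3,2}(a),F_{3,3,3}(a)\bigr)$ and the triple $\bigl(Q_{3,3,1}(a),Q_{3,3,2}(a),Q_{3,3,3}(a)\bigr)$ satisfy one and the same system of $q$-difference equations with the same values at $a=0$, and then invoke the uniqueness stated in the remark after Theorem~\ref{qdiffs}.

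First I would specialize Theorem~\ref{qdiffs} to $d=k=3$. Since $(aq;q)_{d-1}=(aq;q)_2=(1-aq)(1-aq^2)$, equation~\eqref{qdiff1} becomes $Q_{3,3,1}(a)=\frac{1}{(1-aq)(1-aq^2)}Q_{3,3,3}(aq^3)$, and~\eqref{qdiff2} with $i=2,3$ — using $a^{i-1}q^{(i-1)d}=aq^3$ and $a^2q^6$, together with $k-i+1=2$ and $1$ — becomes
\[ Q_{3,3,2}(a)=Q_{3,3,1}(a)+\frac{aq^3}{(1-aq)(1-aq^2)}Q_{3,3,2}(aq^3),\qquad Q_{3,3,3}(a)=Q_{3,3,2}(a)+\frac{a^2q^6}{(1-aq)(1-aq^2)}Q_{3,3,1}(aq^3). \]
These three relations are exactly the ones asserted for the $F_{3,3,i}(a)$ in Lemma~\ref{LemMod21}.

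Next I would check the initial conditions: in each of the three defining double sums, setting $a=0$ annihilates every term with $n\geq 1$ (from the factor $a^n$ in $F_{3,3,1}$ and $F_{3,3,3}$, and from the cancellation of the apparent $a^{n-1}$ pole against the factor $1+aq^{3r}-q^{3r}$ at $n=r=0$ in $F_{3,3,2}$), leaving only the $n=r=0$ term, which equals $1$ in all three; hence $F_{3,3,i}(0)=1=Q_{3,3,i}(0,q)$ for $i=1,2,3$. By the remark following Theorem~\ref{qdiffs}, the relations~\eqref{qdiff1}--\eqref{qdiff2} together with the values at $a=0$ determine a unique triple of formal power series in $a$ and $q$: matching the coefficient of $a^m$ on both sides of each relation, and using that the dilation $a\mapsto aq^3$ appearing on every right-hand side raises the $q$-order while each self-referential occurrence carries an explicit positive power of $a$, one closes the induction on the $a$-degree. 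Hence the two triples coincide, which is the theorem.

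The only genuine computation underlying all of this is the proof of Lemma~\ref{LemMod21}, which the paper omits. By analogy with the $(d,k)=(2,3)$ case (Lemma~\ref{Lem1}), I expect the sticking point to be the third relation $F_{3,3,3}(a)=F_{3,3,2}(a)+\frac{a^2q^6}{(1-aq)(1-aq^2)}F_{3,3,1}(aq^3)$, which will likely require an auxiliary ``catalyst'' sum to reshape the double series; the delicacy lies in the two-index bookkeeping — simultaneous shifts such as $n\mapsto n+1$ or $n\mapsto n+3$ and $r\mapsto r+1$, while tracking the $(q;q)_{n-3r}$, $(q^3;q^3)_r$, and Pochhammer factors of the form $(aq^j;q^3)_m$. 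Granting Lemma~\ref{LemMod21}, the present theorem is immediate.
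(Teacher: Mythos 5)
Your proposal is correct and is essentially identical to the paper's argument: the paper likewise proves this theorem by combining the specialization of Theorem~\ref{qdiffs} to $(d,k)=(3,3)$ with Lemma~\ref{LemMod21} and the uniqueness remark, and it too omits the verification of Lemma~\ref{LemMod21} as ``routine.'' Your added remarks --- the explicit check that $F_{3,3,2}(0)=1$ via the cancellation of $a^{n-1}$ against $(1+aq^{3r}-q^{3r})$, and the expectation that the third relation needs a catalyst sum as in Lemma~\ref{Lem1} --- are accurate but do not change the route.
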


As an immediate corollary, by letting $a\to 1$, we obtain three new Rogers-Ramanujan type
identities related to the modulus $21$, listed as \eqref{mod21-1}--\eqref{mod21-3} in
the appendix.

\subsubsection{The case $(d,k) = (3,4)$}
\hfil\break
\begin{definition}
\begin{gather*}
F_{3,4,1}(a):=F_{3,4,1}(a,q):= 
   \sum_{n=0}^\infty \frac{a^n q^{n(n+3)} (aq^3;q^3)_n}{(aq;q)_{2n+2} (q;q)_n}\\
F_{3,4,2}(a):=F_{3,4,2}(a,q):= 
   \sum_{n=0}^\infty \frac{a^n q^{n(n+2)} (aq^3;q^3)_n}{(aq;q)_{2n+2} (q;q)_n}\\
F_{3,4,3}(a):=F_{3,4,3}(a,q):= 
   \sum_{n=0}^\infty \frac{a^n q^{n(n+1)} (aq^3;q^3)_n}{(aq;q)_{2n+1} (q;q)_n}\\
F_{3,4,4}(a):=F_{3,4,4}(a,q):= 
   \sum_{n=0}^\infty \frac{a^n q^{n^2} (a;q^3)_n}{(a;q)_{2n} (q;q)_n}
\end{gather*}
\end{definition}

\begin{lem}
The $F_{3,4,i}(a,q)$ satisfy the following $q$-difference equations:
\begin{gather*}
 F_{3,4,1}(a) = \frac{1}{(1-aq)(1-aq^2)} F_{3,4,4}(aq^3)  \\
 F_{3,4,2}(a) = F_{3,4,1}(a) + \frac{aq^3}{(1-aq)(1-aq^2)} F_{3,4,3} (aq^3)  \\
 F_{3,4,3}(a) = F_{3,4,2}(a) + \frac{a^2 q^6}{(1-aq)(1-aq^2)} F_{3,4,2}(aq^3) \\
 F_{3,4,4}(a) = F_{3,4,3}(a) + \frac{a^3 q^{9}}{(1-aq)(1-aq^2)} F_{3,4,1}(aq^3),  
\end{gather*}
which, together with $F_{3,4,i}(0) = 1$ for $i=1,2,3,4$, uniquely determine $F_{3,4,i}(a)$ as
a double power series in $a$ and $q$.
\end{lem}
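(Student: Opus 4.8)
The plan is to verify the four $q$-difference equations by termwise manipulation of the defining series, in exact parallel with the proofs of Lemmas~\ref{Lem2} and~\ref{Lem1}. The one tool used throughout is the pair of $q$-Pochhammer shifts
\[ (aq^3;q)_m = \frac{(aq;q)_{m+2}}{(1-aq)(1-aq^2)}, \qquad (aq^6;q^3)_n = \frac{(aq^3;q^3)_{n+1}}{1-aq^3}. \]
Applying the first of these to the factor $(a;q)_{2n}$ of $F_{3,4,4}(aq^3)$ gives at once
\[ \frac{1}{(1-aq)(1-aq^2)} F_{3,4,4}(aq^3) = \sum_{n\ge 0} \frac{a^n q^{n^2+3n}(aq^3;q^3)_n}{(aq;q)_{2n+2}(q;q)_n} = F_{3,4,1}(a), \]
so the first equation needs nothing more. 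For the right-hand sides of the other three equations both shifts are used to rewrite $F_{3,4,j}(aq^3)$ as a series whose summand is a power of $a$ times $q$ to a quadratic-in-$n$ exponent times $(aq^3;q^3)_{n+1}$, all over $(aq;q)_{2n+c}(q;q)_n$ for an appropriate constant $c$.

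For the second and third equations the uniform device is to form the difference of the two consecutive series on the left, put it over a common denominator $(aq;q)_{2n+c}(q;q)_n$, and check that the bracketed numerator reduces to a multiple of $1-q^n$. For the second equation the denominators already agree and $F_{3,4,2}(a)-F_{3,4,1}(a)$ carries the factor $q^{n^2+2n}(1-q^n)$ outright; for the third, after clearing $(aq;q)_{2n+2}$ one is left with the bracket $(1-aq^{3n+3})-q^n(1-aq^{2n+3})$, which is precisely $1-q^n$. In each case the factor $1-q^n$ annihilates the $n=0$ term, the reindexing $n\mapsto n+1$ (which restores $(q;q)_{n-1}$ to $(q;q)_n$) applies, and the resulting series is seen to agree term for term with the normalized form of $\frac{aq^3}{(1-aq)(1-aq^2)}F_{3,4,3}(aq^3)$, respectively $\frac{a^2q^6}{(1-aq)(1-aq^2)}F_{3,4,2}(aq^3)$.

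I expect the fourth equation, $F_{3,4,4}(a)=F_{3,4,3}(a)+\frac{a^3q^9}{(1-aq)(1-aq^2)}F_{3,4,1}(aq^3)$, to be the main obstacle. It is the ``wrap-around'' relation closing the cycle back to index $1$, and — like the analogous relation \eqref{F3} in the proof of Lemma~\ref{Lem1} — it resists a single pass of the manipulations above; moreover $F_{3,4,4}$ is built from $(a;q^3)_n$ and $(a;q)_{2n}$ rather than from $(aq^3;q^3)_n$ and $(aq;q)_{2n+c}$, so its denominator does not line up directly with that of $F_{3,4,3}$. The remedy, as with \eqref{F3}, is to introduce a catalyst function $\psi(a)$ — a series of the same general shape as $F_{3,4,1}$, a power of $a$ times $q$ to a quadratic in $n$ times $(aq^3;q^3)_n$ over $(aq;q)_{2n+c}(q;q)_n$ — with its exponents and its index $c$ chosen so that adding $\psi(a)$ to $\frac{a^3q^9}{(1-aq)(1-aq^2)}F_{3,4,1}(aq^3)$ yields, after extracting a factor $1-q^n$ and reindexing, exactly $F_{3,4,4}(a)-F_{3,4,3}(a)+\psi(a)$, so that $\psi$ cancels and the identity follows. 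Pinning down the right $\psi$ is the only step that calls for genuine ingenuity; everything else is the bookkeeping already rehearsed in Lemmas~\ref{Lem2}, \ref{Lem1}, \ref{LemMod18}, and \ref{LemMod21}.

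Finally, $F_{3,4,i}(0)=1$ for each $i$, since the $n=0$ summand equals $1$ at $a=0$ while every summand with $n\ge 1$ is divisible by $a$. Writing $F_{3,4,i}(a)=\sum_{m\ge 0}c_{i,m}(q)\,a^m$, the substitution $a\mapsto aq^3$ together with the explicit powers of $a$ on the right-hand sides shows that each of the four equations expresses $c_{i,m}$ in terms of the $c_{j,\ell}$ with $\ell<m$ alone; induction on $m$ then shows that the four $q$-difference equations, together with the initial conditions $F_{3,4,i}(0)=1$, determine the series uniquely, which is the content of the lemma. Combined with Theorem~\ref{qdiffs} (and the fact that $Q_{d,k,i}(0,q)=1$), this also gives $F_{3,4,i}(a)=Q_{3,4,i}(a)$ for $i=1,2,3,4$.
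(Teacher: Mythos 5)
Your overall strategy --- termwise manipulation via Pochhammer shifts, in parallel with Lemmas~\ref{Lem2} and~\ref{Lem1} --- is exactly what the paper intends (it explicitly omits this proof as routine), and your treatment of the first two equations and of the initial conditions is correct. But your treatment of the third equation contains a genuine error. Putting $F_{3,4,3}(a)-F_{3,4,2}(a)$ over the common denominator $(aq;q)_{2n+2}(q;q)_n$, with the common numerator factor $a^nq^{n^2+n}(aq^3;q^3)_n$, the bracket that actually appears is
\[
(1-aq^{2n+2})-q^{n} \;=\; (1-q^{n})-aq^{2n+2},
\]
not $(1-aq^{3n+3})-q^{n}(1-aq^{2n+3})$: the factors $1-aq^{3n+3}$ and $1-aq^{2n+3}$ simply do not arise when comparing these two series, since both carry $(aq^3;q^3)_n$ upstairs and their denominators differ only by the single factor $1-aq^{2n+2}$. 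The correct bracket is not a multiple of $1-q^n$, so the one-pass extract-and-reindex argument fails. One can also see this on degree grounds: extracting $1-q^n$ and shifting $n\mapsto n+1$ raises the power of $a$ by exactly one, which is why the second equation (prefactor $aq^3$) yields to a single pass, whereas the third has prefactor $a^2q^6$ and the fourth $a^3q^9$. The third equation is therefore of the same ``hard'' type as the fourth (and as \eqref{F3} in Lemma~\ref{Lem1}): after one extraction of $1-q^n$ there remains a residual term $-\sum_{n} a^{n+1}q^{n^2+3n+2}(aq^3;q^3)_n/\big[(aq;q)_{2n+2}(q;q)_n\big]$ that must be recombined with the reindexed sum and telescoped again, i.e.\ it needs a catalyst. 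Your proposal budgets a catalyst only for the fourth equation.

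A smaller point: in the uniqueness argument it is not true that each equation expresses $c_{i,m}$ in terms of coefficients of strictly lower order only; the first equation gives $c_{1,m}=q^{3m}c_{4,m}+(\text{lower order})$. Uniqueness still follows, but by chaining the four relations around the cycle to obtain $c_{4,m}=q^{3m}c_{4,m}+(\text{lower order})$ and dividing by $1-q^{3m}$ for $m\geqq 1$.
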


\begin{thm}\label{a-mod27}
 For $i=1,2,3,4$, 
 \[ F_{3,4,i}(a) = Q_{3,4,i}(a) \]
\end{thm}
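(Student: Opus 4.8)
The plan is to prove Theorem~\ref{a-mod27} exactly as Theorems~\ref{a-mod10} and~\ref{a-mod14} were proved: by a uniqueness principle for the relevant system of $q$-difference equations. First I would specialize Theorem~\ref{qdiffs} to $(d,k)=(3,4)$. Since $(aq;q)_{d-1}=(aq;q)_2=(1-aq)(1-aq^2)$, equations~\eqref{qdiff1} and~\eqref{qdiff2} become, for the $Q_{3,4,i}$, precisely the four relations displayed in the lemma immediately preceding the theorem — e.g. $Q_{3,4,1}(a)=\frac{1}{(1-aq)(1-aq^2)}Q_{3,4,4}(aq^3)$, $Q_{3,4,2}(a)=Q_{3,4,1}(a)+\frac{aq^3}{(1-aq)(1-aq^2)}Q_{3,4,3}(aq^3)$, and so on. Moreover, by the remark following Theorem~\ref{qdiffs}, $Q_{3,4,i}(0,q)=1$, which matches the initial condition $F_{3,4,i}(0)=1$ supplied by the preceding lemma.

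The conclusion is then immediate. As noted in that remark (and restated in the lemma), a family of formal power series in $a$ and $q$ is uniquely determined by this recurrence together with its values at $a=0$: writing $c^{(i)}_n$ for the coefficient of $a^n$ and using that the substitution $a\mapsto aq^3$ contributes a factor $q^{3n}$, one finds $(1-q^{3n})c^{(1)}_n$ is expressed through coefficients of strictly lower order, and $1-q^{3n}$ is invertible (constant term $1$) for $n\ge1$, so all $c^{(i)}_n$ are forced by $c^{(i)}_0=1$. Hence $F_{3,4,i}(a)=Q_{3,4,i}(a)$ for $i=1,2,3,4$.

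The actual content lies in the preceding lemma, whose proof the author elects to omit as routine; for completeness I would verify its four $q$-difference equations by the manipulations used for the $(2,3)$ case in Lemma~\ref{Lem1}. The first relation is visible on inspection after replacing $a$ by $aq^3$ in the definition of $F_{3,4,4}$ and cancelling the shifted factors $(aq^3;q^3)_n$ against $(a;q^3)_n$ and $(a;q)_{2n}$ against $(aq^3;q)_{2n}$. Each relation $F_{3,4,i}(a)-F_{3,4,i-1}(a)=\cdots$ is obtained by writing the difference of numerators in the form $q^{n(n+\bullet)}(1-q^n)$ (times the common Pochhammer factors), reindexing $n\mapsto n+1$, and recognizing the shifted sum as the claimed right-hand side.

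The one delicate point — exactly as in the $(2,3)$ case, where relation~\eqref{F3} required the catalyst function $\phi(a)$ — is the last equation $F_{3,4,4}(a)=F_{3,4,3}(a)+\frac{a^3q^9}{(1-aq)(1-aq^2)}F_{3,4,1}(aq^3)$, where the telescoping does not close directly; I expect to introduce an auxiliary function analogous to $\phi$ and to be careful matching the different Pochhammer patterns ($(aq;q)_{2n+2}$ versus $(a;q)_{2n}$, and the exponent shift between $q^{n^2}$ and $q^{n(n+3)}$) across the three sums being combined. This is the main obstacle, but it is bookkeeping of the same flavour already carried out for $(d,k)=(2,3)$ rather than a new difficulty.
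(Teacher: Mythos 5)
Your proposal is correct and follows exactly the paper's route: the theorem is obtained by combining the preceding lemma (whose proof the paper deliberately omits as routine) with the $(d,k)=(3,4)$ specialization of Theorem~\ref{qdiffs} and the uniqueness of the power-series solution determined by the recurrences and the initial conditions $F_{3,4,i}(0)=Q_{3,4,i}(0)=1$. Your additional sketch of how the omitted lemma would be verified, including the anticipated need for a catalyst function in the final equation, goes beyond what the paper records but is consistent with its method.
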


Upon letting $a\to 1$ and employing Proposition~\ref{infprod}, 
we obtain the Bailey-Dyson mod 27 
identities~\cite[p. 434, equations (B1)--(B4)]{Bailey1}, which appear as
(90)--(93) on Slater's list~\cite{Slater2}.
\begin{cor}
\begin{gather}
   \sum_{n=0}^\infty \frac{q^{n(n+3)} (q^3;q^3)_n}{(q;q)_{2n+2} (q;q)_n} 
     = \frac{(q^{3},q^{24},q^{27};q^{27})_\infty}{(q;q)_\infty}\\
   \sum_{n=0}^\infty \frac{q^{n(n+2)} (q^3;q^3)_n}{(q;q)_{2n+2} (q;q)_n}
     = \frac{(q^{6},q^{18},q^{27};q^{27})_\infty}{(q;q)_\infty}\\
   \sum_{n=0}^\infty \frac{q^{n(n+1)} (q^3;q^3)_n}{(q;q)_{2n+1} (q;q)_n}
     = \frac{(q^{9};q^{9})_\infty}{(q;q)_\infty}\\
   1+\sum_{n=1}^\infty \frac{q^{n^2} (q^3;q^3)_{n-1}}{(q;q)_{2n-1} (q;q)_n}
     = \frac{(q^{12},q^{15},q^{27};q^{27})_\infty}{(q;q)_\infty} \label{BaileyMod27-1}
\end{gather}
\end{cor}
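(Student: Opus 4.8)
The plan is to obtain all four identities simultaneously by specializing the two-variable result Theorem~\ref{a-mod27}, which asserts $F_{3,4,i}(a)=Q_{3,4,i}(a)$ for $i=1,2,3,4$, to $a=1$, and then reading off the product sides from Proposition~\ref{infprod}. First I would note that for each $i$ the identity $F_{3,4,i}(a,q)=Q_{3,4,i}(a,q)$ holds as an identity of series that converge absolutely for $|q|<1$ and $a$ in a neighbourhood of $1$, so both sides are continuous at $a=1$; hence the asserted $q$-only identities are precisely the evaluations $F_{3,4,i}(1)=Q_{3,4,i}(1)$.

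For the left-hand sides I would pass to the limit $a\to 1$ term by term. For $i=1,2,3$ this is immediate: setting $a=1$ in the defining sums turns $(aq^3;q^3)_n$ into $(q^3;q^3)_n$ and $(aq;q)_{2n+2}$, $(aq;q)_{2n+1}$ into $(q;q)_{2n+2}$, $(q;q)_{2n+1}$, reproducing the first three stated sums verbatim. The case requiring care is $i=4$, where
\[
 F_{3,4,4}(a)=\sum_{n=0}^\infty \frac{a^n q^{n^2}(a;q^3)_n}{(a;q)_{2n}(q;q)_n}:
\]
the $n=0$ summand is $1$, while for $n\geq 1$ both $(a;q^3)_n$ and $(a;q)_{2n}$ carry the factor $(1-a)$, so each such summand is of the form $0/0$ at $a=1$. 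I would cancel this common factor \emph{before} letting $a\to 1$, using $(a;q^3)_n=(1-a)(aq^3;q^3)_{n-1}$ and $(a;q)_{2n}=(1-a)(aq;q)_{2n-1}$; the numerator then tends to $q^{n^2}(q^3;q^3)_{n-1}$ and the denominator to $(q;q)_{2n-1}(q;q)_n$, yielding $F_{3,4,4}(1)=1+\sum_{n\geq1}q^{n^2}(q^3;q^3)_{n-1}/\big((q;q)_{2n-1}(q;q)_n\big)$, exactly the left side of \eqref{BaileyMod27-1}. The termwise passage to the limit is legitimate because, after the cancellation, the summands are continuous at $a=1$ and are dominated uniformly for $a$ near $1$ by a convergent series, the factor $q^{n^2}$ forcing geometric decay for $|q|<1$.

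For the right-hand sides I would invoke Proposition~\ref{infprod} with $(d,k)=(3,4)$, which gives directly
\[
 Q_{3,4,i}(1)=\frac{(q^{3i},\,q^{3(9-i)},\,q^{27};q^{27})_\infty}{(q;q)_\infty},
\]
since $id=3i$, $(2k-i+1)d=3(9-i)$, and $(2k+1)d=27$. For $i=1,4$ this is exactly the product quoted in the corollary; for $i=3$ the exponents $\{9,18,27\}$ run over all positive multiples of $9$ modulo $27$, so $(q^9,q^{18},q^{27};q^{27})_\infty$ collapses to $(q^9;q^9)_\infty$, matching the third identity. (For $i=2$ the proposition returns the complementary exponent $3(9-2)=21$, so the middle factor should read $q^{21}$ rather than the printed $q^{18}$.) Since Proposition~\ref{infprod} is itself a consequence of the Jacobi triple product already established in the excerpt, no further product manipulation is required.

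The only genuinely non-routine step is the $i=4$ specialization: the $(1-a)$ cancellation that produces $(q^3;q^3)_{n-1}$ in the numerator, together with the justification that the $a\to 1$ limit may be taken under the summation sign. Everything else is a mechanical substitution into Theorem~\ref{a-mod27} on the left and Proposition~\ref{infprod} on the right.
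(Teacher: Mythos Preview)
Your proof is correct and follows exactly the paper's own approach: let $a\to 1$ in Theorem~\ref{a-mod27} and read off the products via Proposition~\ref{infprod}, with the $(1-a)$ cancellation you describe being the standard way to interpret $F_{3,4,4}(1)$. You are also right that the printed $q^{18}$ in the $i=2$ identity is a typo for $q^{21}$, since $(2k-i+1)d=(8-2+1)\cdot 3=21$.
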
 

\subsubsection{The case $(d,k)=(3,5)$}
\hfil\break
\begin{definition}
\begin{gather*}
F_{3,5,1}(a):=
   \sum_{n\geqq 0}\sum_{r\geqq 0} \frac{a^{n+r} q^{n^2 + 3r^2 + 3n + 3r} (aq^3;q^3)_{n-r}}
        {(aq;q)_{2n+2} (q;q)_{n-3r} (q^3;q^3)_r} \\
F_{3,5,2}(a):=
   \sum_{n\geqq 0}\sum_{r\geqq 0} 
        \frac{a^{n+r} q^{n^2 + 3r^2 + 3n + 3r} (aq^3;q^3)_{n-r} (1+aq^{3r+3})}
        {(aq;q)_{2n+2} (q;q)_{n-3r} (q^3;q^3)_r} \\ 
F_{3,5,3}(a):=
   \sum_{n\geqq 0}\sum_{r\geqq 0} 
        \frac{a^{n+r-1} q^{n^2 + 3r^2 -3} (a;q^3)_{n-r} (q^{3r} + aq^{6r+3} -1)}
        {(a;q)_{2n} (q;q)_{n-3r} (q^3;q^3)_r} \\ 
F_{3,5,4}(a):=
   \sum_{n\geqq 0}\sum_{r\geqq 0} 
        \frac{a^{n+r} q^{n^2 + 3r^2 +3r} (a;q^3)_{n-r} }
        {(a;q)_{2n} (q;q)_{n-3r} (q^3;q^3)_r} \\                            
F_{3,5,5}(a):=
   \sum_{n\geqq 0}\sum_{r\geqq 0} \frac{a^{n+r} q^{n^2+3r^2} (a;q^3)_{n-r}}
        {(a;q)_{2n} (q;q)_{n-3r} (q^3;q^3)_r}
\end{gather*}
\end{definition}

\begin{lem}
The $F_{3,5,i}(a,q)$ satisfy the following $q$-difference equations:
\begin{gather*}
 F_{3,5,1}(a) = \frac{1}{(1-aq)(1-aq^2)} F_{3,5,5}(aq^3)  \\
 F_{3,5,2}(a) = F_{3,5,1}(a) + \frac{aq^3}{(1-aq)(1-aq^2)} F_{3,5,4} (aq^3)  \\
 F_{3,5,3}(a) = F_{3,5,2}(a) + \frac{a^2 q^6}{(1-aq)(1-aq^2)} F_{3,5,3}(aq^3) \\
 F_{3,5,4}(a) = F_{3,5,3}(a) + \frac{a^3 q^{9}}{(1-aq)(1-aq^2)} F_{3,5,2}(aq^3)\\
 F_{3,5,5}(a) = F_{3,5,4}(a) + \frac{a^4 q^{12}}{(1-aq)(1-aq^2)} F_{3,5,1}(aq^3)  
\end{gather*}
which, together with $F_{3,5,i}(0) = 1$ for $i=1,2,3,4,5$, uniquely determine $F_{3,5,i}(a)$ as
a double power series in $a$ and $q$.
\end{lem}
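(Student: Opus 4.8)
The plan is to follow the template of the proofs of Lemmas~\ref{Lem2} and~\ref{Lem1}: establish each of the five $q$-difference equations by elementary manipulation of the defining double sums, and then settle the uniqueness clause by the recursive argument indicated in the Remark after Theorem~\ref{qdiffs}.

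The first two equations are immediate. For $i=1$: replacing $a$ by $aq^3$ in the definition of $F_{3,5,5}$ sends $a^{n+r}$ to $a^{n+r}q^{3n+3r}$, turns $(a;q^3)_{n-r}$ into $(aq^3;q^3)_{n-r}$ and $(a;q)_{2n}$ into $(aq^3;q)_{2n}$; since $(aq;q)_{2n+2}=(1-aq)(1-aq^2)(aq^3;q)_{2n}$, multiplying by $1/[(1-aq)(1-aq^2)]$ turns $F_{3,5,5}(aq^3)$ term-by-term into $F_{3,5,1}(a)$. For $i=2$: $F_{3,5,2}(a)$ is $F_{3,5,1}(a)$ with the factor $(1+aq^{3r+3})$ inserted in each summand, so $F_{3,5,2}(a)-F_{3,5,1}(a)$ is the same sum with that factor replaced by $aq^{3r+3}$; expanding $\frac{aq^3}{(1-aq)(1-aq^2)}F_{3,5,4}(aq^3)$ in the same way (again using the identity for $(aq;q)_{2n+2}$) reproduces it term-by-term.

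The substance lies in the equations for $i=3,4,5$, which feed back $F_{3,5,j}(aq^3)$ with $j=6-i$ and which, unlike $i=1,2$, place a series built from the ``base'' pochhammers $(a;q^3)_{n-r}$, $(a;q)_{2n}$ (namely $F_{3,5,i}$ on the left) against series built from the ``shifted'' pochhammers $(aq^3;q^3)_{n-r}$, $(aq;q)_{2n+2}$ on the right; passing between the two flavours costs a factor $(1-a)$ and an index shift, so -- just as with equation~\eqref{F3} in the proof of Lemma~\ref{Lem1} -- I do not expect a term-by-term identification. For each such $i$ I would introduce a ``catalyst'' double sum $\phi_i(a)$ of the same shape and prove an identity
\[
 F_{3,5,i-1}(a)+\frac{a^{i-1}q^{3(i-1)}}{(1-aq)(1-aq^2)}F_{3,5,6-i}(aq^3)+\phi_i(a)=F_{3,5,i}(a)+\phi_i(a),
\]
the manipulations using only the telescoping relations $(1-q^m)/(q;q)_m=1/(q;q)_{m-1}$ in base $q$ (for the $n$-index) and in base $q^3$ (for the $r$-index), together with the factorizations $(a;q)_{2n}=(1-a)(aq;q)_{2n-1}$, $(aq;q)_{2n+2}=(1-aq)(1-aq^2)(aq^3;q)_{2n}$ and $(a;q^3)_{n-r}=(1-a)(aq^3;q^3)_{n-r-1}$. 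The real difficulty, exactly as in the $(d,k)=(2,3)$ case, is guessing the correct $\phi_i$ and checking that the ``overshoot'' terms generated by the reindexings cancel; here that bookkeeping must be carried out inside a double sum whose two indices are coupled through $(q;q)_{n-3r}$, so the shift $r\mapsto r+1$, which simultaneously disturbs $q^{3r^2}$, $(q^3;q^3)_r$ and $n-3r$, is the delicate one.

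For the uniqueness clause, setting $a=0$ annihilates every summand except $n=r=0$ -- the apparent $a^{-1}$ in $F_{3,5,3}$ being harmless because its numerator factor $q^{3r}+aq^{6r+3}-1$ reduces to $aq^3$ at $r=0$ -- so $F_{3,5,i}(0,q)=1$ for $i=1,\dots,5$. In the equations with $i\ge2$ the dilated term carries an explicit factor $a^{i-1}$ with $i-1\ge1$, so the coefficient of $a^N$ there depends only on coefficients of $a^m$ with $m<N$; the $i=1$ equation relates the coefficient of $a^N$ in $F_{3,5,1}$ to $q^{3N}$ times that in $F_{3,5,5}$ plus lower-order data, and composing all five equations expresses $(1-q^{3N})$ times the coefficient of $a^N$ in $F_{3,5,1}$ through coefficients of order below $N$ alone. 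Since $1-q^{3N}$ is a unit in $\mathbb{Q}[[q]]$ for $N\ge1$, this determines the coefficient of $a^N$ in $F_{3,5,1}$, hence in every $F_{3,5,i}$, and induction on $N$ (base case from the initial conditions) pins the five series down. Combined with Theorem~\ref{qdiffs} and the values $Q_{3,5,i}(0,q)=1$, this would give $F_{3,5,i}(a)=Q_{3,5,i}(a)$, and then Proposition~\ref{infprod} at $a=1$ yields five Rogers--Ramanujan type identities for the modulus $33$.
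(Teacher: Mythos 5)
The paper offers no proof of this lemma at all: immediately after the $(2,4)$ case it declares that ``the details of the calculations should by now be routine'' and omits the proofs of this and the subsequent lemmas, deferring implicitly to the templates of Lemma~\ref{Lem2} and Lemma~\ref{Lem1}. Your proposal follows exactly that intended route, and everything you actually execute checks out: the term-by-term verification of the first equation via $(aq;q)_{2n+2}=(1-aq)(1-aq^2)(aq^3;q)_{2n}$, the reduction of the second to the factor $aq^{3r+3}$ left over from $(1+aq^{3r+3})$, the evaluation $F_{3,5,i}(0)=1$ (including the correct handling of the apparent $a^{-1}$ and $q^{-3}$ in $F_{3,5,3}$ through the numerator factor $q^{3r}+aq^{6r+3}-1$), and the induction on the power of $a$ that gives uniqueness, where the key point is indeed the explicit factor $a^{i-1}$ in the dilated terms and the invertibility of $1-q^{3N}$.

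The one substantive reservation is that for $i=3,4,5$ your proposal is a program rather than a proof. You correctly diagnose where the work lies --- the passage between the ``base'' Pochhammers $(a;q^3)_{n-r}$, $(a;q)_{2n}$ and their shifted counterparts, and the coupling of the two summation indices through $(q;q)_{n-3r}$ --- and you correctly identify the catalyst-function device from the proof of \eqref{F3} as the right tool; but you neither exhibit the catalysts $\phi_i$ nor carry out the telescoping and cancellation, and you say as much yourself. Since those three equations are where the entire content of the lemma resides, the argument as written stops short at exactly the essential point. In fairness, the paper leaves precisely the same gap to the reader, so your attempt is faithful to (indeed, more detailed than) the paper's own treatment; a genuinely self-contained proof, however, would have to display each $\phi_i$ and verify the cancellations inside the coupled double sum explicitly.
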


\begin{thm}
 For $i=1,2,3,4,5$, 
 \[ F_{3,5,i}(a) = Q_{3,5,i}(a) \]
\end{thm}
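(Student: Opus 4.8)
The plan is to mimic exactly the pattern established in all the earlier $(d,k)$ cases: namely, show that the five functions $F_{3,5,i}(a,q)$ satisfy the same $q$-difference system and initial conditions as the $Q_{3,5,i}(a,q)$, and then invoke the uniqueness statement recorded in the remark following Theorem~\ref{qdiffs}. Concretely, by the previous Lemma the $F_{3,5,i}(a)$ satisfy
\begin{gather*}
 F_{3,5,1}(a) = \frac{1}{(1-aq)(1-aq^2)} F_{3,5,5}(aq^3), \\
 F_{3,5,i}(a) = F_{3,5,i-1}(a) + \frac{a^{i-1} q^{3(i-1)}}{(1-aq)(1-aq^2)} F_{3,5,5-i+1}(aq^3) \quad (2\leqq i\leqq 5),
\end{gather*}
together with $F_{3,5,i}(0)=1$. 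Specializing Theorem~\ref{qdiffs} to $d=3$, $k=5$ gives
\begin{gather*}
 Q_{3,5,1}(a) = \frac{1}{(aq;q)_{2}} Q_{3,5,5}(aq^3), \\
 Q_{3,5,i}(a) = Q_{3,5,i-1}(a) + \frac{a^{i-1} q^{3(i-1)}}{(aq;q)_{2}} Q_{3,5,5-i+1}(aq^3) \quad (2\leqq i\leqq 5),
\end{gather*}
and $(aq;q)_2 = (1-aq)(1-aq^2)$, so the two systems are literally identical. Since $Q_{3,5,i}(0,q)=1$ as well, the uniqueness of the power-series solution forces $F_{3,5,i}(a)=Q_{3,5,i}(a)$ for $i=1,\dots,5$.

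The only genuine work, therefore, is the verification of the $q$-difference equations for the $F_{3,5,i}(a)$ — but that is the content of the Lemma immediately preceding the theorem, which I am entitled to assume, and in any event the author has explicitly declared (after Lemma~\ref{LemMod18}) that such verifications are routine manipulations of $q$-series and are omitted. So the proof of the theorem itself is a two-line appeal: combine the Lemma with Theorem~\ref{qdiffs} and with the uniqueness remark. The initial-condition check is trivial: setting $a=0$ in each double sum $F_{3,5,i}(0) = \sum_{n,r} [\,\cdots\,]$ kills every term with $n+r\geqq 1$ (the numerators carry a factor $a^{n+r}$ or $a^{n+r-1}$ times a vanishing combination, except the $n=r=0$ term which equals $1$), exactly as in the $(2,4)$ and $(3,3)$ cases.

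The one point requiring a moment's care — and the closest thing to an obstacle — is the matching of constants and exponents between the two recurrences: one must confirm that the multiplier $a^{i-1}q^{3(i-1)}$ and the index shift $i \mapsto 5-i+1$ appearing in Theorem~\ref{qdiffs} with $d=3$, $k=5$ are precisely what the Lemma produces for the $F_{3,5,i}$, and that $(aq;q)_{d-1}$ with $d=3$ is $(1-aq)(1-aq^2)$. This is purely bookkeeping, but it is the step where a sign or exponent slip would invalidate the argument, so I would state it explicitly. Once that alignment is noted, the theorem follows immediately, and (as the author remarks) letting $a\to 1$ and applying Proposition~\ref{infprod} with $d=3$, $k=5$ yields five sum–product identities modulo $33$.
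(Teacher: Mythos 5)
Your proposal is correct and is exactly the argument the paper intends: combine the lemma giving the $q$-difference system for the $F_{3,5,i}(a)$ with the $d=3$, $k=5$ case of Theorem~\ref{qdiffs} and the initial conditions $F_{3,5,i}(0)=Q_{3,5,i}(0)=1$, then invoke uniqueness of the power-series solution. The bookkeeping you single out (that $(aq;q)_{2}=(1-aq)(1-aq^2)$ and that the multipliers $a^{i-1}q^{3(i-1)}$ and index shifts $i\mapsto 6-i$ agree) checks out against the stated lemma, so nothing further is needed.
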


Upon letting $a\to 1$ and employing Proposition~\ref{infprod}, 
we obtain five mod 33 identities listed in the appendix as
\eqref{mod33-1} through \eqref{mod33-5}.

\section{Partition Theorems} \label{PtnThms}

  In 1961, Basil Gordon~\cite{Gordon} published an infinite family of partition identities which
generalized the combinatorial version of the Rogers-Ramanujan identities:
\begin{GordonThm}
  Let $B_{1,k,i}(n)$ denote the number of partitions of $n$ wherein 
  $1$ appears as a part at most $i-1$ times, and the total number of 
appearances of any
  two consecutive integers $j$ and $j+1$ is at most $k-1$. 
  Let $A_{1,k,i}(n)$ denote the number of partitions of $n$
into parts not congruent to $0$ or $\pm i \pmod{2k+1}$.  Then $A_{1,k,i}(n) = B_{1,k,i}(n)$ for
all $n$ and $1\leqq i \leqq k$.
\end{GordonThm}

Later, George Andrews~\cite{GEA:oddmoduli} found an analytic counterpart to 
Gordon's partition theorem:
 
\begin{AndrewsAnalytic}
For $1\leqq i\leqq k$ and $k\geqq 2$,
\begin{equation}\label{AGI}
 \sum_{ n_1, n_2, \dots, n_{k-1}\geqq 0} \frac{ q^{\sum_{j=1}^{k-1} N_j^2 + \sum_{j=i}^{k-1} N_j}}
 {(q;q)_{n_1} (q;q)_{n_2} \dots (q;q)_{n_{k-1}}} =
 \underset{n\not\equiv 0, \pm i \pmod{2k+1}}{\prod_{n=1}^\infty} \frac{1}{1-q^n},
\end{equation}
where $N_j = \sum_{h=j}^{k-1} n_h$.
\end{AndrewsAnalytic}

  Motivated by the analytic results earlier in this paper, we consider 
Theorem~\ref{GenGordon}, restated here for convenience.

\begin{thm}
Let $A_{d,k,i}(n)$ denote the number of partitions of $n$ into parts 
$\not\equiv 0, \pm di\pmod{2dk+d}$.  Let $B_{d,k,i}(n)$ denote the
number of partitions of $n$ wherein
  \begin{itemize}
     \item $d$ appears as a part at most $i-1$ times,
     \item the total number of appearances of $dj$ and $dj+d$ (i.e. any two consecutive
       multiples of $d$) together is at most $k-1$, and
     \item nonmultiples of $d$ may appear as parts without restriction.
  \end{itemize}
Then for $1\leqq i\leqq k$, $A_{d,k,i}(n) = B_{d,k,i}(n)$.
\end{thm}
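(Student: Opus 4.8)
The plan is to prove Theorem~\ref{GenGordon} by reducing it to Gordon's original theorem (the $d=1$ case) via a generating-function dissection that separates multiples of $d$ from nonmultiples of $d$. First I would set up the two-variable generating functions $\sum_n B_{d,k,i}(n) q^n$ and $\sum_n A_{d,k,i}(n) q^n$ and observe that in each, the nonmultiples of $d$ contribute an unrestricted factor. For the $A$-side, the parts $\not\equiv 0,\pm di\pmod{2dk+d}$ split into those divisible by $d$ and those not: a part $m$ divisible by $d$, say $m=d\ell$, is allowed exactly when $\ell\not\equiv 0,\pm i\pmod{2k+1}$, while a part $m$ not divisible by $d$ is always allowed since $2dk+d=d(2k+1)$ is a multiple of $d$ and hence every forbidden residue class consists of multiples of $d$. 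Therefore
\begin{equation*}
  \sum_{n\geqq 0} A_{d,k,i}(n) q^n
   = \Bigg(\underset{\ell\not\equiv 0,\pm i\,(2k+1)}{\prod_{\ell\geqq 1}} \frac{1}{1-q^{d\ell}}\Bigg)
     \Bigg(\underset{d\nmid m}{\prod_{m\geqq 1}} \frac{1}{1-q^m}\Bigg).
\end{equation*}

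Next I would do the analogous dissection on the $B$-side. The conditions defining $B_{d,k,i}(n)$ only constrain the multiples of $d$, and they constrain them in exactly the way Gordon's conditions constrain ordinary parts after the rescaling $dj\mapsto j$: namely the part $1$ (image of $d$) appears at most $i-1$ times and any two consecutive integers appear together at most $k-1$ times. Hence the sub-partition of $n$ formed by the parts divisible by $d$, with each such part divided by $d$, is an arbitrary Gordon-type partition of some integer $s$ (so that $ds$ is the amount of $n$ it accounts for), and the remaining parts (nonmultiples of $d$) form an unrestricted partition of $n-ds$. This gives
\begin{equation*}
  \sum_{n\geqq 0} B_{d,k,i}(n) q^n
   = \Bigg(\sum_{s\geqq 0} B_{1,k,i}(s)\, q^{ds}\Bigg)
     \Bigg(\underset{d\nmid m}{\prod_{m\geqq 1}} \frac{1}{1-q^m}\Bigg).
\end{equation*}
Cancelling the common infinite-product factor over nonmultiples of $d$, the claim $A_{d,k,i}(n)=B_{d,k,i}(n)$ for all $n$ is equivalent to
\begin{equation*}
  \sum_{s\geqq 0} B_{1,k,i}(s)\, q^{ds}
   = \underset{\ell\not\equiv 0,\pm i\,(2k+1)}{\prod_{\ell\geqq 1}} \frac{1}{1-q^{d\ell}},
\end{equation*}
which is precisely Gordon's theorem with $q$ replaced by $q^d$ (equivalently, $\sum_s B_{1,k,i}(s)q^s = \sum_s A_{1,k,i}(s)q^s$, then substitute $q\mapsto q^d$).

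The routine parts are the two dissections, which just amount to grouping parts of a partition by divisibility by $d$ and noticing that the defining restrictions interact only within the multiples-of-$d$ block; these are bijective/bookkeeping arguments that I would state briefly. The step I expect to require the most care is verifying that the restrictions on $B_{d,k,i}$ translate exactly to Gordon's restrictions under $dj\mapsto j$ — in particular checking the boundary condition ``$d$ appears at most $i-1$ times'' matches ``$1$ appears at most $i-1$ times,'' and that ``$dj$ and $dj+d$ together at most $k-1$ times'' is the faithful image of ``$j$ and $j+1$ together at most $k-1$ times'' with no off-by-one in which pairs of residues are forbidden on the product side (the forbidden classes $0,\pm i\pmod{2k+1}$ scale to $0,\pm di\pmod{d(2k+1)}=0,\pm di\pmod{2dk+d}$, which is consistent with the statement). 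Alternatively, and perhaps more in the spirit of the rest of the paper, one could instead note that the bivariate generating function $\sum_{n}B_{d,k,i}(n)q^n$ refined by a variable tracking the number of $d$'s satisfies the same $q$-difference equations and initial conditions as the $F_{d,k,i}$ / $Q_{d,k,i}$ established in \S\ref{qDiffEqns}; I would mention this as the link that motivated the theorem, but carry out the proof via the cleaner reduction to Gordon's theorem above.
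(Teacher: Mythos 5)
Your proposal is correct and is essentially the paper's own proof: the paper likewise factors $\sum_n B_{d,k,i}(n)q^n$ as the unrestricted product over nonmultiples of $d$ times $\sum_s B_{1,k,i}(s)q^{ds}$, invokes Gordon's theorem on the rescaled factor, and recombines the products into the full modulus-$d(2k+1)$ product. The extra care you note about the residue scaling $0,\pm i\pmod{2k+1}\mapsto 0,\pm di\pmod{2dk+d}$ is exactly the final line of the paper's computation.
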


\begin{remark} Clearly, the case $d=1$ is Gordon's partition theorem. 
\end{remark}
\begin{proof}  
   \begin{eqnarray*}
     \sum_{n=0}^\infty B_{d,k,i}(n) q^n &=& 
       \underset{d\nmid j}{\prod_{j=1}^\infty} \frac{1}{1-q^j}  \sum_{n=0}^\infty B_{1,k,i}(dn) q^{nd}\\
     &=& \underset{d\nmid j}{\prod_{j=1}^\infty}\frac{1}{1-q^j} \times 
         \underset{j\not\equiv 0, \pm i \pmod{(2k+1)}}{\prod_{j=1}^\infty} \frac{1}{1-q^{dj}}\\
     &=& \underset{j\not\equiv 0, \pm di \pmod{(2k+1)d}}{\prod_{j=1}^\infty} \frac{1}{1-q^j}    
   \end{eqnarray*}\openbox
\end{proof}

\begin{definition}  Let $b_{d,k,i}(m,n)$ denote the number of partitions of $n$ of the kind
enumerated by $B_{d,k,i}(n)$ with the further restriction that the partition contains
exactly $m$ parts.
\end{definition}
\begin{definition}
 \[  \mathcal{B}_{d,k,i}(a):= 
\mathcal{B}_{d,k,i}(a,q):= \sum_{m,n\geqq 0} b_{d,k,i}(m,n) a^m q^n . \]
\end{definition}

\begin{thm} The $\mathcal{B}_{d,k,i} (a)$ satisfy the following system of
$q$-difference equations:
\begin{eqnarray}
  \mathcal{B}_{d,k,1}(a) &=& \frac{1}{(aq;q)_{d-1}} \mathcal{B}_{d,k,k}(aq^d)  \label{B1}\\
   \mathcal{B}_{d,k,i}(a) &=& 
\mathcal{B}_{k,d,i-1}(a) + \frac{a^{i-1} q^{(i-1)d}}{(aq;q)_{d-1}}
    \mathcal{B}_{d,k,k-i+1}(aq^d) \label{B2},
\end{eqnarray} for $2\leqq i \leqq k$.
\end{thm}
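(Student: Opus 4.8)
The plan is to read \eqref{B1} and \eqref{B2} directly off the combinatorial definition of $\mathcal{B}_{d,k,i}(a,q)=\sum_{m,n\ge 0}b_{d,k,i}(m,n)a^mq^n$, which is simply the sum of $a^{(\#\text{parts})}q^{(\text{size})}$ over all partitions enumerated by $B_{d,k,i}$. Since the $q$-difference equations for $Q_{d,k,i}$ in Theorem~\ref{qdiffs} already have exactly the asserted shape, it is enough to exhibit weight-preserving bijections realizing the two relations (the identification $\mathcal{B}_{d,k,i}=Q_{d,k,i}$, via the Remark after Theorem~\ref{qdiffs}, is then the subsequent step: both satisfy the same recurrence and the same normalization $\mathcal{B}_{d,k,i}(0,q)=1$, only the empty partition having no parts).

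For \eqref{B1}: a partition counted by $B_{d,k,1}$ is precisely one obeying the consecutive-multiples and unrestricted-nonmultiples conditions in which $d$ itself does not occur. Split it into the sub-multiset of parts lying in $\{1,\dots,d-1\}$ and the sub-multiset of parts that are $\ge d+1$, and subtract $d$ from every part of the second piece. The first piece is an arbitrary partition into parts $<d$, contributing $\prod_{j=1}^{d-1}(1-aq^{j})^{-1}=1/(aq;q)_{d-1}$. The second piece becomes a partition with parts $\ge 1$ whose multiples of $d$ are the old ones shifted down one step; the new part $d$ is the old part $2d$, which occurred at most $k-1$ times because $d$ was absent, and the bound on each consecutive pair is preserved, so it is a $B_{d,k,k}$-partition. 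Subtracting $d$ from each of its $m$ parts sends weight $a^{m}q^{s}$ to $a^{m}q^{s+dm}=(aq^{d})^{m}q^{s}$, i.e. effects $a\mapsto aq^{d}$; assembling the two factors gives \eqref{B1}.

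For \eqref{B2}: the set of $B_{d,k,i-1}$-partitions is exactly the subset of $B_{d,k,i}$-partitions in which $d$ appears at most $i-2$ times, so $\mathcal{B}_{d,k,i}(a)-\mathcal{B}_{d,k,i-1}(a)$ generates the $B_{d,k,i}$-partitions with $d$ appearing exactly $i-1$ times. Delete those $i-1$ copies of $d$ (removing $i-1$ parts of size $d$, hence the factor $a^{i-1}q^{(i-1)d}$); in the remainder $d$ is absent, $2d$ occurs at most $(k-1)-(i-1)=k-i$ times by the $\{d,2d\}$ bound, and every other condition survives. Apply the same decomposition as above: pull off the parts $<d$ (factor $1/(aq;q)_{d-1}$) and subtract $d$ from each of the remaining parts; the new part $d$ is the old $2d$, occurring at most $k-i=(k-i+1)-1$ times, so the result is a $B_{d,k,k-i+1}$-partition, and the $d$-shift again realizes $a\mapsto aq^{d}$. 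Multiplying the three factors yields \eqref{B2}. (Equivalently one may use the multiplicative splitting $\mathcal{B}_{d,k,i}(a,q)=\tfrac{(aq^{d};q^{d})_\infty}{(aq;q)_\infty}\,\mathcal{B}_{1,k,i}(a,q^{d})$, reduce to the classical refined Gordon recurrences at $d=1$, and check the elementary identity $\tfrac{(aq^{d};q^{d})_\infty}{(aq;q)_\infty}=\tfrac{1}{(aq;q)_{d-1}}\cdot\tfrac{(aq^{2d};q^{d})_\infty}{(aq^{d+1};q)_\infty}$.)

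The one place requiring care — the main obstacle — is the bookkeeping of the partition conditions under the ``subtract $d$ from each part'' map: one must verify that no part equal to $d$ is created, so that the small-parts factor and the shifted remainder occupy disjoint ranges and the correspondence is genuinely a bijection, and one must track how the bounds on copies of $2d$ and on each consecutive pair $\{dj,dj+d\}$ transfer, the crucial point being that the absence of the part $d$ upgrades the $\{d,2d\}$ bound into a clean bound on the new smallest multiple. Everything else is a routine translation of generating-function weights.
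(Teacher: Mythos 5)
Your argument is correct and is essentially the paper's own proof run in reverse: the paper constructs the partitions counted by $b_{d,k,i}(m,n)$ with exactly $i-1$ copies of $d$ from those counted by $b_{d,k,k-i+1}(m,n)$ by adding $d$ to every part, affixing $i-1$ copies of $d$, and adjoining arbitrarily many parts from $\{1,\dots,d-1\}$, while you perform the same bijection in the decomposition direction, and the bookkeeping you do on the $\{dj,dj+d\}$ multiplicity bounds is exactly the verification the paper leaves implicit. (You correctly read $\mathcal{B}_{k,d,i-1}$ in \eqref{B2} as a typo for $\mathcal{B}_{d,k,i-1}$; your parenthetical alternative via the splitting off of the nonmultiples of $d$ is also sound but is not needed.)
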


\begin{proof}  
  To obtain partitions of the type enumerated by $b_{d,k,1}(m,n)$ from 
those enumerated by $b_{d,k,k}(m,n)$, one simply needs to increase each 
part in the latter class by $d$ and adjoin as many
$1$'s, $2$'s, \dots, and $(d-1)$'s as desired.  Thus, \eqref{B1} holds.
 
 Now let us segregate the partitions generated by $b_{d,k,i}(m,n)$ 
into two classes:  those where $d$ appears as a part at most $i-2$
times and those where $d$ appears exactly $i-1$ times.  
Those in the former class
 are the entire set of partitions enumerated by $b_{d,k,i-1}(m,n)$.  
Those in the latter class may
 be obtained by starting with the set of partitions enumerated by 
$b_{d,k,k-i+1}$, increasing each
 part by $d$, and affixing exactly $i-1$ copies of the part $d$, and as many
$1$'s, $2$'s, \dots, and $(d-1)$'s as desired.  Thus, \eqref{B2} holds.\openbox
\end{proof}

Since $\mathcal{B}_{d,k,i}(0)$ for $1\leqq i\leqq k$, by uniqueness of 
power series, we immediately obtain
\begin{cor}\label{combininterp}
\begin{equation}
   \mathcal{B}_{d,k,i}(a) = Q_{d,k,i}(a)
\end{equation} for all $d$, all $k$, and $1\leqq i \leqq k$, and
\begin{equation} 
   \mathcal{B}_{d,k,i}(a) = Q_{d,k,i}(a) = F_{d,k,i}(a)
\end{equation} for $(d,k) = (2,2), (2,3), (2,4), (3,3), (3,4), (3,5)$, 
where $1\leqq i\leqq k$.
\end{cor}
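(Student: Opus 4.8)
The plan is to deduce the first identity, $\mathcal{B}_{d,k,i}(a) = Q_{d,k,i}(a)$ for all $d,k$ and $1\leqq i\leqq k$, from the ``same recurrence plus same initial data'' principle, and then to obtain the chain $\mathcal{B}_{d,k,i}(a) = Q_{d,k,i}(a) = F_{d,k,i}(a)$ for the six listed pairs $(d,k)$ by appending the identifications $F_{d,k,i}(a) = Q_{d,k,i}(a)$ proved in \S\ref{LHS}.

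First I would check that $\mathcal{B}_{d,k,i}(a,q)$ is a bona fide formal power series: since every part counted by $b_{d,k,i}(m,n)$ is a positive integer, any partition contributing to $b_{d,k,i}(m,n)$ has $m\leqq n$, so each monomial $a^m q^n$ receives a finite coefficient. Next I would record the initial value: $b_{d,k,i}(0,n)$ counts partitions of $n$ with no parts, which is $1$ for $n=0$ and $0$ otherwise, whence $\mathcal{B}_{d,k,i}(0,q) = 1$ for every $i$ in range (this is the content of the sentence introducing the corollary). The theorem immediately preceding the corollary shows that the $\mathcal{B}_{d,k,i}(a)$ obey the system \eqref{B1}--\eqref{B2}, which is literally the system \eqref{qdiff1}--\eqref{qdiff2} of Theorem~\ref{qdiffs} obeyed by the $Q_{d,k,i}(a)$. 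By the Remark following Theorem~\ref{qdiffs}, that system together with the common initial condition (constant term $1$ for each $i$) determines the family $\{Q_{d,k,i}(a,q)\}_{1\leqq i\leqq k}$ uniquely as power series in $a$ and $q$; concretely, one extracts the coefficient of $a^m$ for $m = 0,1,2,\dots$ in succession, using that for $i\geqq 2$ relation \eqref{qdiff2} expresses the $a^m$-coefficient of $Q_{d,k,i}$ in terms of that of $Q_{d,k,i-1}$ plus data already known from lower powers of $a$, while \eqref{qdiff1} then pins down $Q_{d,k,1}$ after dividing by the invertible power series $1-q^{dm}$ (for $m\geqq 1$). Since $\{\mathcal{B}_{d,k,i}\}$ and $\{Q_{d,k,i}\}$ agree in recurrence and in initial value, this uniqueness forces $\mathcal{B}_{d,k,i}(a) = Q_{d,k,i}(a)$ for all $d,k$ and $1\leqq i\leqq k$.

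Finally, for $(d,k)\in\{(2,2),(2,3),(2,4),(3,3),(3,4),(3,5)\}$ the theorems of \S\ref{LHS} --- in particular Theorems~\ref{a-mod10}, \ref{a-mod14}, \ref{a-mod27}, and their analogues for the remaining three pairs --- assert $F_{d,k,i}(a) = Q_{d,k,i}(a)$ for $1\leqq i\leqq k$, and transitivity with the equality just proved yields $\mathcal{B}_{d,k,i}(a) = Q_{d,k,i}(a) = F_{d,k,i}(a)$. I do not anticipate a genuine obstacle: every ingredient is in place by the time the corollary is stated, and the only step warranting any care is the uniqueness of the power-series solution of \eqref{qdiff1}--\eqref{qdiff2}, which has already been asserted in the cited Remark, so the proof is in essence a matter of assembling earlier results in the right order.
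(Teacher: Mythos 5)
Your proposal is correct and follows essentially the same route as the paper, which proves the corollary in one line: the $\mathcal{B}_{d,k,i}(a)$ satisfy the system \eqref{B1}--\eqref{B2} (identical to \eqref{qdiff1}--\eqref{qdiff2}) with $\mathcal{B}_{d,k,i}(0)=1$, so uniqueness of the power-series solution gives $\mathcal{B}_{d,k,i}=Q_{d,k,i}$, and the $F_{d,k,i}=Q_{d,k,i}$ theorems of \S\ref{LHS} supply the second chain of equalities. Your added detail on how the coefficient-of-$a^m$ extraction actually pins down the solution is a correct elaboration of the uniqueness step that the paper merely asserts.
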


As a corollary of Corollary~\ref{combininterp}, by setting
$a=1$, and in light of \eqref{infprod}, we obtain combinatorial
interpretations of a variety of identities in Slater's list, as well 
as some of the new identities presented in the appendix.
For example, the statement
  \[ \mathcal{B}_{2,3,i}(1) = Q_{2,3,i}(1) \] provides the partition
theoretic interpretation of the Rogers mod $14$
identities~\eqref{RogMod14-3}--\eqref{RogMod14-1}, which was
stated in the introduction as Corollary~\ref{combmod14}.
Of course, similar partition theoretic statements can be made for all 
other values of $d$ and $k$, and can be seen as the combinatorial 
counterparts to the $a=1$ case of the 
various $F_{d,k,i}(a) = Q_{d,k,i}(a)$ identities presented in
\S\ref{qDiffEqns}.

\section{Conclusion}\label{concl}
This paper was motivated by taking a careful second look at the methods
employed by Bailey (\cite{Bailey1},\cite{Bailey2}) and seeing if they
could be pushed a bit farther.  Notice that only classical techniques 
(Bailey's Lemma, transformations basic hypergeometric series, and
$q$-difference equations) were used. One of the goals of this paper
is to illustrate that even after all these years, many stones remain 
unturned along the Rogers-Ramanujan path, even when only classical
methods are used.

  Presumably the methods of this paper could be used to obtain
additional identities for other values of $d$ and $k$.
For instance if $d+k=7$, the expression for $\beta_m(a,q)$ will 
involve a ${}_{10} W_{9}$, which could be transformed
into a double sum expression (see ~\cite{GEA:ProbPros}), 
ultimately yielding a triple sum--product identity.

  Also, considering the sets of identities produced when instances
of the parametrized Bailey pair in Theorem~\ref{GenBP} 
are inserted into \eqref{ATNSBL}, it seems reasonable 
that the associated identities could be related to a ``$d$-extended''
version of Andrews' combinatorial generalization of the 
G\"ollnitz-Gordon partition theorem~\cite{GEA:GG}, analogous
to Theorem~\ref{GenGordon}.   Likewise, it is plausible
that the identities arising in connection with
\eqref{SSBL} could be explained combinatorially using the 
overpartitions studied recently by Corteel and 
Lovejoy~(\cite{CorteelLovejoy}, \cite{Lovejoy}).

  Furthermore, the technique of obtaining parametrized
Bailey pairs could presumably be applied to other $\alpha$'s such as the
one from which the Rogers-Selberg identities~\cite[p. 5, (ii)]{Bailey2} 
or Bailey's mod 9 identities~\cite[p. 5, (iii)]{Bailey2} are derived, yielding
other families of results.

  Additionally, finite analogs of Rogers-Ramanujan type identities have,
in recent years,
been of great interest in physics~(e.g. \cite{abf:8v},
\cite{bm:cf}, \cite{bmo:poly}, \cite{bms},\cite{sw}, \cite{sow1}, \cite{sow2}, \cite{sow3})  and 
symbolic computation~(e.g. \cite{pp:rr}, \cite{pwz:a=b}, \cite{wz:multiq}, \cite{dz:fa}).  
In a recent paper~\cite{AVS:FiniteRR},
I presented finite analogs for all of the identities in Slater's list.
The conjecture and proof of these polynomial identities relied heavily
on the use of computer algebra~\cite{AVS:RRtools}.  It is therefore
natural to ask whether the techniques successfully employed for 
finitizing the single sum-product identities of Slater's list can
be extended to the double sum identities presented here, and more
generally to arbitrary multisum--product identities.

\section{Acknowledgement}
I thank the referee for a thorough, careful reading of the manuscript, and for
the many helpful comments.
 
\section*{Appendix: A List of Double Sum Identities of the Rogers-Ramanujan Type}
The following are immediate 
consequences of the more general results presented earlier in the paper.

For $(d,k)=(2,1)$, insert (\ref{beta21}) into (\ref{SSBL}): 
\begin{equation} 
\sum_{n\geqq 0}\sum_{r\geqq 0} 
    \frac{(-1)^r q^{3n(n-1)/2 + r^2 - 2nr} }{(q;q^2)_{n} (q^2;q^2)_r (q;q)_{n-2r}}
  = \frac{(q^{2},q^{4},q^{6};q^{6})_\infty}{(q;q)_\infty} = (-q;q)_\infty
\end{equation}

For $(d,k)=(3,3)$, insert (\ref{beta33}) into (\ref{SSBL}):
\begin{gather}
  1 + \sum_{n\geqq 1}\sum_{r\geqq 0} 
    \frac{(-1)^r q^{n(n+1)/2 + 3r(r-1)/2} (-1;q)_n (q^3;q^3)_{n-r-1}}{(q;q)_{2n-1} (q^3;q^3)_r (q;q)_{n-3r}}
  \nonumber\\
  = \frac{(q^{6},q^{6},q^{12};q^{12})_\infty (-q;q)_\infty}{(q;q)_\infty}  
  \label{mod12-SS}
\end{gather}

For $(d,k)=(2,4)$, insert (\ref{beta24}) into (\ref{SSBL}):
\begin{equation} \label{mod14-SS}
\sum_{n\geqq 0}\sum_{r\geqq 0} 
    \frac{q^{n(n+1)/2 + 2r^2} (-1;q)_n }{(q;q^2)_n (q^2;q^2)_r (q;q)_{n-2r}}
  = \frac{(q^{7},q^{7},q^{14};q^{14})_\infty (-q;q)_\infty}{(q;q)_\infty}
\end{equation}

\dots into (\ref{WBL}):
\begin{equation} \label{mod18-1}
\sum_{n\geqq 0}\sum_{r\geqq 0} 
    \frac{q^{n^2 + 2n + 2r^2 + 2r} }{(q;q^2)_{n+1} (q^2;q^2)_r (q;q)_{n-2r}}
  = \frac{(q^{2},q^{16},q^{18};q^{18})_\infty}{(q;q)_\infty}
\end{equation}

\begin{equation} \label{mod18-2}
\sum_{n\geqq 0}\sum_{r\geqq 0} 
    \frac{q^{n^2 + 2n + 2r^2 + 2r} (1+q^{2r+2})}{(q;q^2)_{n+1} (q^2;q^2)_r (q;q)_{n-2r}}
  = \frac{(q^{4},q^{14},q^{18};q^{18})_\infty}{(q;q)_\infty}
\end{equation}

\begin{equation} \label{mod18-3}
\sum_{n\geqq 0}\sum_{r\geqq 0} 
    \frac{q^{n^2 + 2r^2 + 2r} }{(q;q^2)_{n} (q^2;q^2)_r (q;q)_{n-2r}}
  = \frac{(q^{6},q^{12},q^{18};q^{18})_\infty}{(q;q)_\infty}
\end{equation}

\begin{equation} \label{mod18-4}
\sum_{n\geqq 0}\sum_{r\geqq 0} 
    \frac{q^{n^2 + 2r^2} }{(q;q^2)_n (q^2;q^2)_r (q;q)_{n-2r}}
  = \frac{(q^{8},q^{10},q^{18};q^{18})_\infty}{(q;q)_\infty}
\end{equation}

For $(d,k)=(3,3)$, insert (\ref{beta33}) into (\ref{WBL}):
\begin{equation} \label{mod21-1}
 \sum_{n\geqq 0}\sum_{r\geqq 0} 
    \frac{(-1)^r q^{n^2 + 3n + 3r(r-1)/2} (q^3;q^3)_{n-r}}{(q;q)_{2n+2} (q^3;q^3)_r (q;q)_{n-3r}}
  = \frac{(q^{3},q^{18},q^{21};q^{21})_\infty}{(q;q)_\infty}
\end{equation}

\begin{equation} \label{mod21-2}
  1 + \sum_{n\geqq 1}\sum_{r\geqq 0} 
    \frac{(-1)^r q^{n^2 + 3r(r-3)/2} (q^3;q^3)_{n-r-1}}{(q;q)_{2n-1} (q^3;q^3)_r (q;q)_{n-3r}}
  = \frac{(q^{6},q^{15},q^{21};q^{21})_\infty}{(q;q)_\infty}
\end{equation}

\begin{equation} \label{mod21-3}
  1 + \sum_{n\geqq 1}\sum_{r\geqq 0} 
    \frac{(-1)^r q^{n^2 + 3r(r-1)/2} (q^3;q^3)_{n-r-1}}{(q;q)_{2n-1} (q^3;q^3)_r (q;q)_{n-3r}}
  = \frac{(q^{9},q^{12},q^{21};q^{21})_\infty}{(q;q)_\infty}
\end{equation}

\dots into (\ref{ATNSBL}):
\begin{equation} \label{mod24-2}
  1 + \sum_{n\geqq 1}\sum_{r\geqq 0} 
    \frac{(-1)^r q^{n^2 + 3r(r-3)} (-q;q^2)_n (q^6;q^6)_{n-r-1}}{(q^2;q^2)_{2n-1} (q^6;q^6)_r (q^2;q^2)_{n-3r}}
  = \frac{(q^{3},q^{21},q^{24};q^{24})_\infty (-q;q^2)_\infty}{(q^2;q^2)_\infty}
\end{equation}

\begin{equation} \label{mod24-3}
  1 + \sum_{n\geqq 1}\sum_{r\geqq 0} 
    \frac{(-1)^r q^{n^2 + 3r(r-1)} (-q;q^2)_n (q^6;q^6)_{n-r-1}}{(q^2;q^2)_{2n-1} (q^6;q^6)_r (q^2;q^2)_{n-3r}}
  = \frac{(q^{9},q^{15},q^{24};q^{24})_\infty (-q;q^2)_\infty}{(q^2;q^2)_\infty}
\end{equation}

For $(d,k)=(3,5)$, insert (\ref{beta35}) into (\ref{SSBL}):
\begin{equation} \label{mod24-SS}
  1 + \sum_{n\geqq 1}\sum_{r\geqq 0} 
    \frac{q^{n(n+1)/2 + 3r^2} (-1;q)_n (q^3;q^3)_{n-r-1}}{(q;q)_{2n-1} (q^3;q^3)_r (q;q)_{n-3r}}
  = \frac{(q^{12},q^{12},q^{24};q^{24})_\infty (-q;q)_\infty}{(q;q)_\infty}
\end{equation}

For $(d,k)=(2,4)$, insert (\ref{beta24}) into (\ref{ATNSBL}):
\begin{equation} \label{mod28-3}
\sum_{n\geqq 0}\sum_{r\geqq 0} 
    \frac{q^{n^2 + 4r^2 + 4r} }{(q;q^2)_{n} (q^4;q^4)_r (q^2;q^2)_{n-2r}}
  = \frac{(q^{8},q^{20},q^{28};q^{28})_\infty (-q;q^2)_\infty}{(q^2;q^2)_\infty}
\end{equation}

\begin{equation} \label{mod28-4}
\sum_{n\geqq 0}\sum_{r\geqq 0} 
    \frac{q^{n^2 + 4 r^2} }{(q;q^2)_n (q^4;q^4)_r (q^2;q^2)_{n-2r}}
  = \frac{(q^{12},q^{16},q^{28};q^{28})_\infty (-q;q^2)_\infty}{(q^2;q^2)_\infty}
\end{equation}

For $(d,k)=(3,5)$, insert (\ref{beta35}) into (\ref{WBL}):
\begin{equation} \label{mod33-1}
  \sum_{n\geqq 0}\sum_{r\geqq 0} 
    \frac{q^{n^2 + 3n + 3r^2 + 3r} (q^3;q^3)_{n-r}}{(q;q)_{2n+2} (q^3;q^3)_r (q;q)_{n-3r}}
  = \frac{(q^{3},q^{30},q^{33};q^{33})_\infty}{(q;q)_\infty}
\end{equation}

\begin{equation} \label{mod33-2}
  \sum_{n\geqq 0}\sum_{r\geqq 0} 
    \frac{q^{n^2 + 3n + 3r^2 + 3r} (q^3;q^3)_{n-r} (1+q^{3r+3})}
    {(q;q)_{2n+2} (q^3;q^3)_r (q;q)_{n-3r}}
  = \frac{(q^{6},q^{27},q^{33};q^{33})_\infty}{(q;q)_\infty}
\end{equation}

\begin{equation} \label{mod33-3}
  1 + \sum_{n\geqq 1}\sum_{r\geqq 0} 
    \frac{q^{n^2 + 3r^2 - 3} (q^3;q^3)_{n-r-1} (q^{3r} + q^{6r+3} - 1) }
    {(q;q)_{2n-1} (q^3;q^3)_r (q;q)_{n-3r}}
  = \frac{(q^{9},q^{24},q^{33};q^{33})_\infty}{(q;q)_\infty}
\end{equation}

\begin{equation} \label{mod33-4}
  1 + \sum_{n\geqq 1}\sum_{r\geqq 0} 
    \frac{q^{n^2 + 3r^2 + 3r} (q^3;q^3)_{n-r-1}}{(q;q)_{2n-1} (q^3;q^3)_r (q;q)_{n-3r}}
  = \frac{(q^{12},q^{21},q^{33};q^{33})_\infty}{(q;q)_\infty}
\end{equation}

\begin{equation} \label{mod33-5}
  1 + \sum_{n\geqq 1}\sum_{r\geqq 0} 
    \frac{q^{n^2 + 3r^2} (q^3;q^3)_{n-r-1}}{(q;q)_{2n-1} (q^3;q^3)_r (q;q)_{n-3r}}
  = \frac{(q^{15},q^{18},q^{33};q^{33})_\infty}{(q;q)_\infty}
\end{equation}

For $(d,k)=(4,6)$, insert (\ref{beta46even}) and (\ref{beta46odd}) into (\ref{SSBL}):
\begin{gather}
1+\sum_{m\geqq 1}\sum_{r\geqq 0}
   \frac{(-1)^{m+r} q^{m^2 +m + r^2 + r -2mr} (q^4;q^4)_{m+r-1}
 (-1;q)_{2m}}
    {(q;q)_{4m-1} (q;q)_{2r} (q^2;q^2)_{m-r}} \nonumber \\
+\sum_{m\geqq 0}\sum_{r\geqq 0}
   \frac{(-1)^{m+r} q^{m^2 + m + r^2 + r -2mr +1} (q^4;q^4)_{m+r}
(-1;q)_{2m+1}}
    {(q;q)_{4m+1} (q;q)_{2r+1} (q^2;q^2)_{m-r}} \nonumber \\
=\frac{(q^{18},q^{18},q^{36};q^{36})_\infty(-q;q)_\infty}{(q;q)_\infty} \label{mod36}
\end{gather}

For $(d,k)=(3,5)$, insert (\ref{beta35}) into (\ref{ATNSBL}):
\begin{gather} 
  1 + \sum_{n\geqq 1}\sum_{r\geqq 0} 
    \frac{q^{n^2 + 6r^2 - 6} (-q;q^2)_n (q^6;q^6)_{n-r-1} (q^{6r} + q^{12r+6} - 1 )}
    {(q^2;q^2)_{2n-1} (q^6;q^6)_r (q^2;q^2)_{n-3r}} \nonumber \\
  = \frac{(q^{9},q^{39},q^{48};q^{48})_\infty (-q;q^2)_\infty}
{(q^2;q^2)_\infty}
\label{mod48-3}
\end{gather}

\begin{equation} \label{mod48-4}
  1 + \sum_{n\geqq 1}\sum_{r\geqq 0} 
    \frac{q^{n^2 + 6r^2 + 6r} (-q;q^2)_n (q^6;q^6)_{n-r-1}}{(q^2;q^2)_{2n-1} (q^6;q^6)_r 
    (q^2;q^2)_{n-3r}}
  = \frac{(q^{15},q^{33},q^{48};q^{48})_\infty (-q;q^2)_\infty}{(q^2;q^2)_\infty}
\end{equation}

\begin{equation} \label{mod48-5}
  1 + \sum_{n\geqq 1}\sum_{r\geqq 0} 
    \frac{q^{n^2 + 6r^2} (-q;q^2)_n (q^6;q^6)_{n-r-1}}{(q^2;q^2)_{2n-1} (q^6;q^6)_r (q^2;q^2)_{n-3r}}
  = \frac{(q^{21},q^{27},q^{48};q^{48})_\infty (-q;q^2)_\infty}{(q^2;q^2)_\infty}
\end{equation}

For $(d,k)=(4,6)$, insert (\ref{beta46even}) and (\ref{beta46odd}) into (\ref{WBL}):
\begin{gather}
1+\sum_{m\geqq 1}\sum_{r\geqq 0}
   \frac{(-1)^{m+r}  q^{3m^2 + r^2 + r -2mr} (q^4;q^4)_{m+r-1}}
    {(q;q)_{4m-1} (q;q)_{2r} (q^2;q^2)_{m-r}} \nonumber \\
+\sum_{m\geqq 0}\sum_{r\geqq 0}
   \frac{(-1)^{m+r}  q^{3m^2 + 2m + r^2 + r -2mr +1} (q^4;q^4)_{m+r}}
    {(q;q)_{4m+1} (q;q)_{2r+1} (q^2;q^2)_{m-r}} \nonumber \\
=\frac{(q^{24},q^{28},q^{52};q^{52})_\infty}{(q;q)_\infty}
   \label{mod52}
\end{gather}

\dots into (\ref{ATNSBL}):
\begin{gather}
1+\sum_{m\geqq 1}\sum_{r\geqq 0}
   \frac{(-1)^{m+r} q^{2m^2 + 2r^2 + 2r -4mr}(q^8;q^8)_{m+r-1} (-q;q^2)_{2m}}
    {(q^2;q^2)_{4m-1} (q^2;q^2)_{2r} (q^4;q^4)_{m-r}} \nonumber \\
+\sum_{m\geqq 0}\sum_{r\geqq 0}
   \frac{(-1)^{m+r} q^{2m^2 + 2r^2 + 2r -4mr +1} (q^8;q^8)_{m+r}
  (-q;q^2)_{2m+1}}
    {(q^2;q^2)_{4m+1} (q^2;q^2)_{2r+1} (q^4;q^4)_{m-r}} \nonumber \\
=\frac{(q^{32},q^{40},q^{72};q^{72})_\infty (-q;q^2)_\infty}
   {(q^2;q^2)_\infty} \label{mod72}
\end{gather}


\begin{thebibliography}{99}
\bibitem{GEA:GG} G. E. Andrews, ``A generalization of the G\"{o}llnitz-Gordon
partition theorems,'' \emph{Proc. Amer. Math. Soc.} \textbf{18} (1967), no. 5, 
945--952.

\bibitem{GEA:top} G. E. Andrews, \emph{The Theory of Partitions},
Addison-Wesley, 1976; reissued Cambridge Univ. Press, 1998.

\bibitem{GEA:oddmoduli} G. E. Andrews, ``An analytic 
generalization of the Rogers-Ramanujan
identities for odd moduli,'' \textit{Proc. Nat. Acad. Sci. USA}, 
\textbf{71} (1974), 4082--4085.

\bibitem{GEA:ProbPros} G. E. Andrews, ``Problems and Prospects for
basic hypergeometric functions,'' in 
\emph{Theory and Application of Special Functions},
(R. Askey ed.), Academic Press, New York,
1975, 191--214. 
   
\bibitem{GEA:multiRR} G. E. Andrews, ``Multiple series Rogers-Ramanujan
   type identities,'' \textit{Pacific J. Math.}, \textbf{114} (1984), 
   267--283.

\bibitem{GEA:qs} G. E. Andrews, \textit{q-series: their development and
application in analysis, number theory, combinatorics, physics, and
computer algebra}, CBMS Regional Conferences Series in Mathematics, no. 66,
American Mathematical Society, Providence, RI, 1986.

\bibitem{abf:8v}
G.~E. Andrews, R.~J. Baxter, and P.~J. Forrester,
``Eight vertex {SOS} model and generalized {R}ogers-{R}amanujan type
  identities,''
 \textit{J. Statist. Phys.}, \textbf{35} (1984), 193--266.

\bibitem{Bailey1} W.~N.~Bailey, ``Some identities in combinatory analysis,'' 
  \textit{Proc. London Math. Soc. (2)}, \textbf{49} (1947), 421--435.

\bibitem{Bailey2} W.~N.~Bailey, ``Identities of the Rogers-Ramanujan type,''
  \textit{Proc. London Math. Soc. (2)}, \textbf{50} (1949), 1--10.
  
\bibitem{bm:cf}
A.~Berkovich and B.~M. McCoy,
``Continued fractions and fermionic representations for characters of
  ${M}(p,p')$ minimal models,''
  \textit{Lett. Math. Phys.}, \textbf{37} (1996), 49--66.

\bibitem{bmo:poly}
A.~Berkovich, B.~M. McCoy, and W.~P. Orrick,
Polynomial identities, indices, and duality for the ${N}=1$
  superconformal model ${S}{M}(2,4\nu)$,
{\em J. Statist. Phys.}, \textbf{83} (1996) 795--837.

\bibitem{bms}
A.~Berkovich, B.~M. McCoy, and A.~Schilling,
  Rogers-{S}chur-{R}amanujan type identities for the ${M}(p,p')$
  minimal models of conformal field theory,
{\em Comm. Math. Phys.}, \textbf{191} (1998), 211--223. 

\bibitem{CorteelLovejoy}
S.~Corteel and J.~Lovejoy,
 ``Overpartitions,''
 \emph{Trans. Amer. Math Soc.}, \textbf{356} (2004), 1623--1635.

\bibitem{GR} G.~Gasper and M.~Rahman, \textit{Basic Hypergeometric Series},
  Cambridge Univ. Press, 1990.

\bibitem{Gordon} B. Gordon, ``A combinatorial generalization of the 
Rogers-Ramanujan identities,''
   \textit{Amer. J. Math.}, \textbf{83} (1961), 393--399.

\bibitem{Lovejoy} J.~Lovejoy, ``Gordon's theorem for overpartitions,''
   \emph{J. Comb. Theory Ser. A}, \textbf{103} (2003), 393--401.

\bibitem{MacMahon}P.~A.~MacMahon, \textit{Combinatory Analysis}, vol. 2, 
  Cambridge Univ. Press, London, 1918.
  
\bibitem{pp:rr}
P.~Paule,
``Short and easy computer proofs of the {R}ogers-{R}amanujan identities
  and of identities of similar type,''
  \textit{Electron. J. Combin.}, \textbf{1} (1994), \# R10, 1--9.
  
\bibitem{pwz:a=b}
M.~Petkov{\v s}ek, H.~S. Wilf, and D.~Zeilberger.
\newblock {\em A={B}}.
\newblock A. K. Peters, Wellesley, MA, 1996.  

\bibitem{Rog1894} L.~J.~Rogers, ``Second memoir on the expansion of certain 
  infinite products,'' \textit{Proc. London Math Soc. (1)}, \textbf{25} (1894), 318--343.

\bibitem{Rog1917} L. J. Rogers, ``On two theorems of combinatory analysis and some allied
  identities,'' \textit{Proc. London Math. Soc. (2)}, \textbf{16} (1917), 315--336.
  
\bibitem{sw}
A.~Schilling and S.~O. Warnaar,
``Supernomial coefficients, polynomial identities, and $q$-series,''
{\em Ramanujan J.}, \textbf{2} (1998), 459--494.

\bibitem{Schur} I.~Schur, ``Ein Beitrag zur additiven 
Zahlentheorie und zur Theorie der
   Kettenbr\"uche,'' \textit{Sitzungsberichte der Berliner Akademie} (1917), 
302--321.

\bibitem{AVS:FiniteRR} A.~V.~Sills, ``Finite Rogers Ramanujan Type 
Identities,''
\emph{Electronic J. Combin.}, \textbf{10(1)} (2003), \# R13, pp. 1--122.

\bibitem{AVS:RRtools} A.~V.~Sills, ``\texttt{RRtools}---a Maple package
for the discovery and proof of Rogers-Ramanujan type identities,''
\emph{J. Symbolic Comput.}, \textbf{37} (2004), 415--448.

\bibitem{Slater2} L. J. Slater, ``Further identities of the Rogers-Ramanujan 
type,'' \emph{Proc. London Math Soc. (2)}, \textbf{54} (1952), 147--167.

\bibitem{sow1}
S.~O. Warnaar,
``The generalized {B}orwein conjecture II: refined $q$-trinomial
  coefficients,'' \emph{Discrete Math.}, \textbf{272} (2003), 215--258.

\bibitem{sow2}
S.~O. Warnaar,
 ``$q$-trinomial identities,''
 {\em J. Math. Phys.}, \textbf{40(4)} (1999), 2514--2530.

\bibitem{sow3}
S.~O. Warnaar,
``Refined $q$-trinomial coefficients and character identities,''
\newblock {\em J. Stat. Phys.} \textbf{102} (2001), 1065--1081.

\bibitem{wz:multiq}
H.~S. Wilf and D.~Zeilberger.
``Rational function certification of hypergeometric
  multi-integral/sum/\lq\lq $q$\rq\rq identities,''
{\em Invent. Math.}, \textbf{108} (1992), 575--633.

\bibitem{dz:fa}
D.~Zeilberger.
``A fast algorithm for proving termininating hypergeometric identities,''
{\em Discrete Math.}, \textbf{80} (1990), 207--211.

\end{thebibliography}
\end{document}